\documentclass{article}
\usepackage[T1,T2A]{fontenc}
\usepackage[utf8]{inputenc}
\usepackage[english]{babel}
\usepackage{amsmath,amsfonts, amssymb, amsthm,diffcoeff,geometry}
\usepackage{thmtools}
\usepackage[final, 
colorlinks=true,
linkcolor=blue,
citecolor = magenta]{hyperref}
\usepackage{cleveref}
\usepackage[backend=biber,language =auto, autolang =other]{biblatex}
\renewcommand{\epsilon}{\varepsilon}
\newcommand{\R}{\mathbb{R}}
\newcommand{\N}{\mathbb{N}}
\newcommand{\m}{\mathfrak{m}}
\newcommand{\narrowto}{\rightharpoonup}
\DeclareMathOperator{\spt}{spt}
\DeclareMathOperator{\age}{age}
\DeclareMathOperator{\diam}{diam}

\newcommand{\Ical}{\mathcal{I}}
\newcommand{\Pcal}{\mathcal{P}}

\DeclareMathOperator{\id}{id}

\DeclareMathOperator{\CD}{\mathsf{CD}}

\DeclareMathOperator{\TCD}{\mathsf{TCD}}
\DeclareMathOperator{\TBM}{\mathsf{TBM}}
\DeclareMathOperator{\sTBM}{\mathsf{sTBM}}
\DeclareMathOperator{\TMCP}{\mathsf{TMCP}}

\DeclareMathOperator{\Opt}{Opt}
\DeclareMathOperator{\OptGeo}{OptGeo}
\DeclareMathOperator{\TOpt}{TOpt}
\DeclareMathOperator{\TOptGeo}{TOptGeo}

\DeclareMathOperator{\Geo}{Geo}

\DeclareMathOperator{\Ent}{Ent}

\newtheorem{theorem}{Theorem}[section]
\newtheorem{lemma}[theorem]{Lemma}
\newtheorem{prop}[theorem]{Proposition}
\newtheorem{corollary}[theorem]{Corollary}

\theoremstyle{definition}
\newtheorem{defn}{Definition}[section]

\title{The equivalence between timelike Ricci curvature and the timelike Brunn-Minkowski inequality on synthetic Lorentzian spaces} 

\author{Osama Farooqui}

\begin{document}
\maketitle
\tableofcontents

\abstract{We introduce the strong $q$-timelike Brunn-Minkowski condition $\mathsf{sTBM}_q(K,N)$ on synthetic Lorentzian spaces, for $0<q<1$. We show that, in the timelike $q$-essentially non-branching setting, the $q$-timelike curvature dimension condition $\mathsf{TCD}_q(K,N)$ is equivalent to $\mathsf{TBM}_q(K,N^+)$, and that the entropic $q$-timelike curvature dimension condition $\mathsf{TCD}_q^e(K,N)$ is equivalent to the reduced $\mathsf{sTBM}$ condition, $\mathsf{sTBM}_q^*(K,N)$. This extends, to a non-smooth setting, our earlier work in proving the equivalence between Ricci curvature and the Brunn-Minkowski inequality on $C^2$ spacetimes.}
\section{Introduction}

This report is concerned with the Brunn-Minkowski inequality on synthetic Lorentzian spaces. Lorentzian spaces carry a causal structure induced by the so-called ``time separation function'', in analogy with the metric structure induced by the distance function on complete Riemannian manifolds. Continuing the analogy, one might be interested in forgetting the smooth structure of the manifold and studying only the causal structure. Work in this direction is old indeed and has been revisited numerous times in the last 100 years, see \cite{Robb, KP, Noldus, KS}. The program is less intuitive than it's Riemannian counterpart, in part due to the lack of topological data that can be extracted from the time separation function, and moreover due to recent interest in constructing spaces which are amenable to the tools of optimal transport. The first of these more modern definitions of ``synthetic'' Lorentzian spaces is Kunzinger--S\"amann's {\it Lorentzian length space} \cite{KS}. Soon after, McCann introduced the notion of {\it metric spacetime} \cite{McCannII}; he later helped refine this definition as part of the octet of Beran {\it et al.\@} \cite{Octet}. Within these two synthetic Lorentzian spaces, one can define the $\TCD$ condition -- a non-smooth analogue of uniform Ricci lower bounds in timelike directions on a Lorentzian manifold. The $\TCD$ condition was first introduced in the smooth setting by McCann \cite{McCann}, and independently, Mondino and Suhr \cite{MS}. Later Braun \cite{Braun} introduced the $\TCD_q$ condition on Lorentzian length spaces, following Cavalletti and Mondino's \cite{CM} introduction of the ``entropic'' $\TCD^e_q$ condition. Beran {\it et al.\@} introduced a weaker condition, $\TMCP_h^+$, on metric spacetimes, but it's fairly easy to see what the correct notion of $\TCD$ would be on these spaces. 

As we mentioned earlier, Lorentzian synthetic spaces are analogues of metric measure spaces. Similarly is $\TCD$ an analogue of the $\CD$ condition, introduced by Lott--Villani \cite{LV}, and independently, Sturm \cite{SturmI,SturmII}. Roughly speaking, $\CD$ is a convexity requirement of the entropy functional along a geodesic of probability measures. One of the strengths of the $\CD$ formulation is the recovery of well-known geometric inequalities, notably the Brunn-Minkowski inequality, suitably modified to account for the lower Ricci bounds. Recall that the Brunn-Minkowski inequality is a concavity requirement of the $n$-th root of the volume of compact sets along a geodesic interpolation. 

Recently Magnabosco, Portinale, and Rossi \cite{MPRI} proved that the Brunn-Minkowski inequality is sufficient to deduce the convexity of the entropy on weighted Riemannian manifolds. In \cite{MPRII} the same authors introduced the notion of the {\it strong Brunn-Minkowski inequality}, which requires concavity of the $n$-th root of the volume along a geodesic of probability measures, and showed that the condition is equivalent to $\CD(K,N)$ in the essentially non-branching metric measure space setting. 

In a previous paper \cite{smoothTBMtoTCD}, we proved an equivalence between the timelike Brunn Minkowski inequality and timelike Ricci lower bounds on $C^2$-smooth weighted spacetimes. In this paper we introduce a {\it strong timelike Brunn-Minkowski inequality}, and reproduce the results of \cite{MPRII} in the synthetic Lorentzian setting. Our main results are \Cref{sTBMtoTCDe} and \Cref{sTBMtoTCD}, which can be summarized as follows. \\

\noindent {\bf Theorem}: {\it Fix $0<q<1$. Let $X$ be a timelike $q$-essentially non-branching globally hyperbolic synthetic spacetime with $\ell_+$ continuous. Then $X\in \sTBM_q^*(K,N)$ iff $X\in \TCD_q^e(K,N)$, and $X\in \sTBM_q(K,N^+)$ iff $X\in \TCD_q(K,N)$.}\\

In section 2 we recall the definition of metric spacetimes, optimal transport on these spaces, and the curvature conditions $\TCD_q^{(e)}(K,N)$, and $\sTBM_q^{(*)}(K,N)$. We also discuss the notion of ``simple measures'', which are central to the proof, as they were in \cite{MPRII}; in that paper they are referred to as ``step measures'', whereas we opt for the term ``simple measure'' in analogy with simple functions. In section 3 we prove our main results. We begin by verifying that $\TCD$ spaces indeed recover the $\sTBM$ property. We then show that $\sTBM$ spaces satisfy a weaker curvature condition, $\TMCP$, which simplifies our analysis significantly and allows us to use the arguments in \cite{MPRII} to prove our main results. 
\section{Preliminaries} 
\subsection{Lorentzian Synthetic Spaces}
Let us reiterate that there is some freedom in choosing the spaces that we will work in. Our framework will be based on Beran {\it et al.\@}'s metric spacetimes. We want to include some additional properties within the construction of our spaces, and want as well an efficient name to refer to spaces with such properties; we opt for the somewhat generic label of ``synthetic spacetime''. We will endeavour to draw a connection between our synthetic spacetimes and the other examples highlighted in the introduction, whenever possible. Most of, if not all, of the following definitions are drawn from \cite{McCannII}. 
\subsubsection{Causal Spaces}
\begin{defn}
A {\it causal space} is a pair $(X,\ell)$ consisting of a set $X$, and a {\it time separation function} $\ell: X\times X\to \{-\infty\}\cup [0,\infty]$, with the following properties
\begin{enumerate}
\item {\it (Causal reflexivity):} $\ell(x,x)\geq 0\quad \forall x\in X$;
\item {\it (Reverse triangle inequality):} $\ell(x,y)\geq \ell(x,z)+\ell(z,y) \quad \forall x,y,z\in X$,
\end{enumerate}
with the convention that $-\infty+a=a+(-\infty) =-\infty$ for all $a\in \{-\infty\}\cup [0,\infty]$. We often use the notation $\ell_+:= \max\{\ell, 0\}$. 
\end{defn}
The time separation induces a binary relation, called the {\it causal order}. We say $x$ is in the {\it causal past} of $y$, and write $x\leq y$, iff $\ell(x,y)\geq 0$. Similarly, we say $x$ is in the {\it chronological past} of $y$, and write $x\ll y$ iff $\ell(x,y)>0$.  

We remark that if the relation $\leq$ is antisymmetric, i.e $x\leq y$ and $y\leq x$ imply $x=y$, then $\leq$ is a partial order, and $(X,\leq, \ll)$ is a causal set in the sense of Kronheimer--Penrose \cite{KP}. Antisymmetry does not hold in our general setting, but is inherited under stronger assumptions on $(X,\ell)$. 

We define the {\it causal set} 
\[X_\leq^2:=\{(x,y)\in X^2: x\leq y\},\]
 and the {\it chronological set} 
 \[X_\ll^2:= \{(x,y)\in X^2: x\ll y\}.\] 

For each $x\in M$, we denote the {\it causal past} of $x$ by $J^-(x):=\{y\in X: y\leq x\}$, and the {\it causal future} of $x$ by $J^+(x):=\{y\in X: x\leq y\}$. Then for a set $A\subset X$, we define the causal future/past of $A$ as
\[J^\pm(A):= \bigcup_{a\in A} J^\pm(a).\]
For two sets $A,B\subset X$, we define the {\it causal emerald} of $A$ and $B$, $J(A,B)$, as 
\[J(A,B) := J^+(A)\cap J^-(B).\]

One could also define similar constructions for the chronological relation, with similar notation; commonly, one replaces $J$ with an $I$. Such sets will not be relevant to our discussion here. 

\begin{defn}[Paths and the age of a path]
A {\it path} is a map $\gamma:[0,1]\to X$. A path is {\it causal} if it is increasing in the causal order, i.e $s\leq t\implies \gamma_s\leq \gamma_t$. A causal path is {\it timelike} $\ell(\gamma_s,\gamma_t)>0$ for all $s\leq t$, and {\it null} if $\ell(\gamma_s,\gamma_t)=0$ for all $s\leq t$. The {\it age} of a causal path is defined by 
\[\age(\gamma):= \inf\left\{\sum_{i=1}^N\ell(\gamma_{t_{i-1}},\gamma_{t_i}): N\in \N, 0=t_0\leq t_1\leq\dots\leq t_N=1\right\}.\]
\end{defn}
\begin{prop}[Properties of $\age$]
Let $(X,\ell)$ be a causal space. Then $\age$ is additive under concatenation, and invariant under (continuous, strictly increasing) reparametrization. 
\end{prop}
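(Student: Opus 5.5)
Both properties are statements about partitions, so the plan is to work directly with the infimum defining $\age$. I use two facts: the reverse triangle inequality, and the convention that $-\infty$ absorbs sums. The latter does not arise here, since $\gamma$ is causal and so every term $\ell(\gamma_{t_{i-1}},\gamma_{t_i})$ lies in $[0,\infty]$.

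\emph{Concatenation.} Let $\gamma$ and $\sigma$ be causal paths with $\gamma_1=\sigma_0$, and let $\gamma*\sigma$ denote the path that runs $\gamma$ on $[0,\tfrac12]$ and $\sigma$ on $[\tfrac12,1]$, each rescaled affinely. By reparametrization invariance, proved below, I may instead treat $\age$ as the infimum over partitions of the underlying parameter interval, whatever its length.

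For $\age(\gamma*\sigma)\le \age(\gamma)+\age(\sigma)$, take any partition of $[0,\tfrac12]$ and any partition of $[\tfrac12,1]$ and join them. The joined partition contains $\tfrac12$, and its sum is exactly the sum of the two separate sums. Taking the infimum over each partition independently gives the inequality.

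For the reverse inequality, take an arbitrary partition of $[0,1]$. If $\tfrac12$ is not already a node, it lies in some $(t_{i-1},t_i)$, and I insert it. By the reverse triangle inequality,
\[\ell(x_{i-1},x_i)\ge \ell(x_{i-1},y)+\ell(y,x_i),\]
where $x_{i-1},x_i$ are the old endpoints and $y$ is the image of $\tfrac12$. So the refined sum is no larger than the original. The refined sum then splits into a sum over a partition of $\gamma$ plus a sum over a partition of $\sigma$, each of which is at least the corresponding age. Hence every partition sum for $\gamma*\sigma$ is at least $\age(\gamma)+\age(\sigma)$. The same argument shows more generally that refining a partition can only decrease the sum, so the infimum may be restricted to partitions containing any prescribed finite set of points.

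\emph{Reparametrization.} Let $\phi:[0,1]\to[0,1]$ be continuous and strictly increasing, with $\phi(0)=0$ and $\phi(1)=1$. Then $\phi$ is a bijection and $t\mapsto\phi(t)$ maps partitions of $[0,1]$ bijectively to partitions of $[0,1]$, preserving order. The sum for $\gamma\circ\phi$ over $\{t_i\}$ equals the sum for $\gamma$ over $\{\phi(t_i)\}$. The two infima are therefore taken over the same set of numbers, so they are equal.

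The only point requiring care is the surjectivity of $\phi$ onto $[0,1]$ (endpoints fixed). If a reparametrization is only required to be continuous and strictly increasing onto some interval $[a,b]$, one should first define $\age$ for paths on $[a,b]$ and note that an affine change of variable identifies the two definitions. The remaining step is routine bookkeeping with $\infty$-valued terms, which is harmless because all terms are nonnegative.
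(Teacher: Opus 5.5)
Your proof is correct and follows essentially the same route as the paper. Inserting $\tfrac12$ into an arbitrary partition and applying the reverse triangle inequality gives $\age(\gamma)+\age(\sigma)\le\age(\gamma\oplus\sigma)$, and joining independent partitions of the two halves gives the reverse inequality. Reparametrization invariance comes from $\phi$ inducing an order-preserving bijection on partitions; the paper states this as two inequalities using the invertibility of $\phi$. Your explicit treatment of the affine rescaling of each half and of the nonnegativity of all terms is slightly more careful than the paper's write-up.
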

\begin{proof}
Let $\gamma$ and $\sigma$ be two causal paths such that $\gamma_1=\sigma_0$. Define the concatenation 
\[\gamma\oplus \sigma := \begin{cases}
\gamma_{2t} & 0\leq t\leq \frac12; \\
\sigma_{2t-1} & \frac12\leq t\leq 1.
\end{cases}\]

Let $\{t_i\}_{i=1}^N$ be a partition of $[0,1]$. Let $t_j$ be the largest element which is smaller than $1/2$. Then $\{t_i\}_{i=1}^j\cup \{1/2\}$ and $\{1/2\}\cup \{t_i\}_{i=j+1}^N$ are partitions of $[0,1/2]$ and $[1/2, 1]$, and 
\begin{align*}
\age(\gamma)+\age(\sigma)\leq& \left(\sum_{i=1}^j \ell(\gamma_{2t_{i-1}}, \gamma_{2t_{i}})+ \ell(\gamma_{2t_{i}},\gamma_{1})\right)+ \left(\ell(\sigma_{0}, \sigma_{2t_{j+1}-1})+\sum_{i=j+1}^N \ell(\sigma_{2t_{i-1}-1}, \sigma_{2t_{i}-1})\right)\\
=& \sum_{i=1}^j \ell((\gamma\oplus \sigma)_{t_{i-1}},(\gamma\oplus \sigma)_{t_{i}})+ \ell((\gamma\oplus \sigma)_{t_{j}},(\gamma\oplus \sigma)_{1/2})\\
&+ \ell((\gamma\oplus \sigma)_{1/2},(\gamma\oplus \sigma)_{t_{j+1}}) + \sum_{i=j+1}^N \ell((\gamma\oplus \sigma)_{t_{i-1}},(\gamma\oplus \sigma)_{t_{i}})\\
&\leq \sum_{i=1}^N \ell((\gamma\oplus \sigma)_{t_{i-1}},(\gamma\oplus \sigma)_{t_{i}}).
\end{align*}
Taking infimum over all partitions $\{t_i\}$, we obtain 
\[\age(\gamma)+\age(\sigma)\leq \age(\gamma\oplus\sigma).\]
Similarly, we may show that 
\[\age(\gamma\oplus\sigma)\leq \age(\gamma)+\age(\sigma),\]
and thus we achieve equality. Now we show invariance under reparametrization. Let $\tilde \gamma$ be a reparametrization of a causal path $\gamma$; that is, $\tilde\gamma =\gamma\circ\phi$, where $\phi:[0,1]\to [0,1]$ is a continuous and strictly increasing bijection.
If $\{t_i\}_{i=0}^N$ is a partition of $[0,1]$, then so too is $\{\phi(t_i)\}_{i=0}^N$, 
\[\age(\gamma) \leq \sum_{i=1}^N\ell(\gamma_{\phi(t_{i-1})},\gamma_{\phi(t_{i})}) = \sum_{i=1}^N \ell(\tilde \gamma_{t_{i-1}},\tilde \gamma_{t_i}).\]
Taking infimums over $\{t_i\}$ gives $\age(\gamma)\leq \age(\tilde \gamma)$. Using the invertibility of $\phi$, we may similarly prove that $\age(\tilde\gamma)\leq \age(\gamma)$. 
\end{proof}

\begin{defn}[Age maximizing and geodesic paths]
 A causal path $\gamma$ is {\it age-maximizing} if $\age(\gamma)=\ell(\gamma_0, \gamma_1)$. A causal path $\gamma$ is a {\it geodesic path} if there is a reparametrization $\tilde \gamma$ of $\gamma$ such that $\ell(\tilde \gamma_s,\tilde \gamma_t)=(t-s)\ell(\tilde \gamma_0,\tilde \gamma_1)$ for all $0\leq s\leq t\leq 1$. 
\end{defn}
 Clearly every geodesic path is age-maximizing. \Cref{agemaxtogeodesic} below will give conditions under which the converse also holds. 
\subsubsection{Topology on Causal Spaces} 

\begin{defn}[Topological and metric causal spaces] A {\it topological causal space} is a triple $(X,\ell,\tau)$ consisting of a causal space $(X,\ell)$ and a topology $\tau$. When this topology is metrizable, we say that $X$ is a {\it metrizable causal space}. A metrizable causal space is {\it Polish} if the topology can be generated by a complete and separable metric. 
\end{defn}

\begin{defn}[Age-maximizing and geodesic curves]
On a topological causal space,  a {\it curve} is a continuous path, i.e.\ a continuous map $\gamma:[0,1]\to X$. A continuous age-maximizing path is called an {\it age-maximizing curve} and a continuous geodesic path is called a {\it geodesic}. 
 \end{defn}
 \begin{prop}[Equivalence between age-maximizing and geodesic curves]\label{agemaxtogeodesic}
Let $(X,\ell ,\tau)$ be a metric spacetime, and suppose $\ell$ is upper semicontinuous and does not attain $\infty$. Then every age-maximizing curve is a geodesic. 
 \end{prop}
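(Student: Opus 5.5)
The strategy is to reparametrize an age-maximizing curve $\gamma$ by the age of its initial segments and check that this gives a geodesic. Set $L:=\ell(\gamma_0,\gamma_1)=\age(\gamma)<\infty$. Define
\[\tau(t):=\age(\gamma|_{[0,t]}) \quad\text{(suitably rescaled to $[0,1]$)}.\]
The first step is to show that every subsegment of $\gamma$ is again age-maximizing and that $\tau(t)=\ell(\gamma_0,\gamma_t)$. By additivity of $\age$ under concatenation and invariance under reparametrization (the Proposition on properties of $\age$),
\[L=\age(\gamma|_{[0,s]})+\age(\gamma|_{[s,t]})+\age(\gamma|_{[t,1]}).\]
For any causal path, taking the trivial partition gives $\age\le\ell$ of its endpoints. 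The reverse triangle inequality gives
\[\ell(\gamma_0,\gamma_s)+\ell(\gamma_s,\gamma_t)+\ell(\gamma_t,\gamma_1)\le L.\]
Comparing the two, each termwise inequality $\age\le\ell$ must be an equality, since all quantities are finite because $\ell$ does not attain $\infty$. Hence $\ell(\gamma_s,\gamma_t)=\age(\gamma|_{[s,t]})$, and $\ell(\gamma_0,\gamma_s)+\ell(\gamma_s,\gamma_t)=\ell(\gamma_0,\gamma_t)$ for all $s\le t$. In particular $\tau(t):=\ell(\gamma_0,\gamma_t)$ is nondecreasing, with $\ell(\gamma_s,\gamma_t)=\tau(t)-\tau(s)$.

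Next I would show that $\tau$ is continuous. Upper semicontinuity of $\ell$ together with continuity of $\gamma$ gives $\limsup_{s\to t}\tau(s)\le\tau(t)$. Since $\tau$ is monotone, this already gives right-continuity from the left side at $t$: $\lim_{s\uparrow t}\tau(s)\le\tau(t)$ holds trivially, and the issue is a possible jump. For the other direction, apply upper semicontinuity to the function $t\mapsto \ell(\gamma_t,\gamma_1)=L-\tau(t)$. This gives $\limsup_{s\to t}(L-\tau(s))\le L-\tau(t)$, i.e.\ $\liminf_{s\to t}\tau(s)\ge\tau(t)$. Combining the two yields continuity of $\tau$. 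This is the step I expect to be the crux; it is exactly where the hypotheses that $\ell$ is upper semicontinuous and finite are used, the latter so that we may subtract.

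Finally, $\tau$ may be only nondecreasing rather than strictly increasing, so it need not be invertible. If $L=0$, every pair satisfies $\ell(\gamma_s,\gamma_t)=0=(t-s)L$ and $\gamma$ itself is already a geodesic. If $L>0$, define $\tilde\gamma_u:=\gamma_{t(u)}$ for any $t(u)\in\tau^{-1}(uL)$, which is nonempty by the intermediate value theorem. This is well defined up to the following observation: on an interval where $\tau$ is constant, $\ell(\gamma_s,\gamma_t)=0$ in both orders along it. Ideally one uses antisymmetry from the metric spacetime structure to conclude that $\gamma$ is constant there; failing that, one chooses $t(u)$ to be, say, the minimal preimage and checks continuity of $\tilde\gamma$ using continuity of $\gamma$ on the collapsed intervals. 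With this choice, $\ell(\tilde\gamma_u,\tilde\gamma_v)=\tau(t(v))-\tau(t(u))=(v-u)L$.

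A remaining technical point is that the paper's definition of reparametrization asks for a strictly increasing homeomorphism $\phi$ with $\tilde\gamma=\gamma\circ\phi$. When $\tau$ has flat pieces one must either invoke the curve being constant there, via the metric spacetime axioms (e.g.\ $\ell$-distinction of points), or interpret the reparametrization in the weak, monotone sense. I would settle this by appealing to the axioms of metric spacetime that make $\leq$ antisymmetric.
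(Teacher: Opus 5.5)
Your first two steps are correct and follow the paper's route: reparametrize by $t\mapsto\age(\gamma|_{[0,t]})$ and get continuity from upper semicontinuity. You are more careful than the paper in deriving $\ell(\gamma_s,\gamma_t)=\tau(t)-\tau(s)$. Your continuity argument, via $t\mapsto\ell(\gamma_0,\gamma_t)$ and $t\mapsto\ell(\gamma_t,\gamma_1)=L-\tau(t)$, is a clean substitute for the paper's use of $(s,t)\mapsto\ell(\gamma_s,\gamma_t)$ vanishing on the diagonal.

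The gap is the last step. On an interval $[a,b]$ where $\tau$ is constant you only know $\ell(\gamma_a,\gamma_b)=0$. Nothing gives $\ell(\gamma_b,\gamma_a)\ge 0$, and it may well be $-\infty$. So ``$\ell=0$ in both orders'' is unjustified, and antisymmetry of $\le$ tells you nothing: the flat piece may be a nonconstant null arc. Your fallback with minimal preimages gives $\tilde\gamma=\gamma\circ t(\cdot)$, where $t(\cdot)$ jumps across each flat interval. Then $\tilde\gamma$ is discontinuous unless $\gamma_a=\gamma_b$, and in any case it is not of the form $\gamma\circ\phi$ for a homeomorphism $\phi$.

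Even constancy on flat intervals would not rescue the statement. A strictly increasing homeomorphism preserves a pause, so $\ell(\tilde\gamma_u,\tilde\gamma_v)=0\neq(v-u)L$ on a nondegenerate interval whenever $L>0$. For instance, in $1+1$ Minkowski space the curve $\gamma_s=(\min\{2s,1\},0)$ (time coordinate first) is continuous and causal with $\age(\gamma)=\ell(\gamma_0,\gamma_1)=1$. It is not a geodesic in the paper's sense.

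So, with the paper's definitions, the statement needs an extra hypothesis. The paper's own proof has the same hole, hidden in the unproved assertion that $\phi$ is ``easily'' strictly increasing. What is missing is exactly that $\tau$ be strictly increasing when $L>0$, i.e.\ regularity: age-maximizing curves between timelike related points are timelike. Note that the paper later \emph{derives} regularity from this proposition, so it cannot be assumed for free there. Under that hypothesis $\tau/L$ is a homeomorphism of $[0,1]$, and your argument, like the paper's, is complete. Alternatively, allowing non-strict monotone reparametrizations handles pauses, but still not nonconstant null arcs.
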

\begin{proof}
Let $\gamma$ be an age-maximizing curve. Since $\ell_+$ is finite, $\age(\gamma)=A<\infty$. If $M=0$, then $\gamma$ is null, and thus must be a geodesic. So assume $A\neq 0$. 

Define $\phi:[0,1]\to [0,1]$ via $\phi(t):= A^{-1}\age(\gamma\vert_{[0,t]})$. It is easy to show that $\phi$ is strictly increasing. It is also continuous, since for $s<t$,  $\phi(t)-\phi(s)=A^{-1}\age(\gamma\vert_{[s,t]})\leq A^{-1}\ell(\gamma(s),\gamma(t))$, and the map $(s,t)\mapsto \ell(\gamma(s),\gamma(t))$ is upper semicontinuous and vanishes along the diagonal. Then  $\tilde \gamma = \gamma\circ \phi^{-1}$ is a reparametrization of $\gamma$. Moreover we have 
\[\age(\tilde \gamma\vert_{[0, t]})=\age(\gamma\vert_{[0,\phi^{-1}(t)]})= A\phi(\phi^{-1}(t))=At=t\ell(\gamma_0, \gamma_1),\]
and therefore 
\[\ell(\tilde\gamma_s,\tilde \gamma_t)=\age(\tilde \gamma\vert_{[s,t]})= (t-s)\ell(\gamma_0,\gamma_1).\]
\end{proof}
From now on we assume that every geodesic $\gamma$ is parametrized such that $\ell(\gamma_s,\gamma_t)=(t-s)\ell(\gamma_0,\gamma_1)$ for all $0\leq s\leq t\leq 1$. We now show that timelike geodesic paths can be made to be continuous under certain conditions.

A central condition is {\it global hyperbolicity}, which is somehow analogous to properness in metric spaces. 

\begin{defn}[Global hyperbolicity]
A topological causal space $(X,\ell, \tau)$ is {\it globally hyperbolic} if for any two compact subsets $C_1,C_2\subset X$, the causal emerald $J(C_1,C_2)$ is compact. 
\end{defn}

This definition of global hyperbolicity excludes the standard condition of {\it non-totally imprisoning} which is usually paired with the compactness of diamonds. However, Minguzzi showed that the standard definition is equivalent to compactness of diamonds and causal closedness. In our setting, we inherit causal closedness from upper semicontinuity of the time separation, so we reserve the term ``globally hyperbolic'' for the compactness of emeralds only. This keeps us close to \cite{Octet}.

\begin{lemma}[Continuity of timelike geodesic paths]\label{geodesicpathtocurve}
Let $(X,\ell, \tau)$ be a topological causal space such that the following 4  properties hold: 
\begin{enumerate}
\item Every pair of causally related points can be joined by an age-maximizing curve;
\item Every age-maximizing curve joining a pair of timelike related points is timelike;
\item The time separation $\ell$ is upper semi-continuous;
\item The space $X$ is globally hyperbolic.
\end{enumerate}
Then every timelike geodesic path of finite age is continuous.
\end{lemma}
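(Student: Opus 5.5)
Let $\gamma:[0,1]\to X$ be a timelike geodesic path of finite age $L=\ell(\gamma_0,\gamma_1)\in(0,\infty)$, parametrized so that $\ell(\gamma_s,\gamma_t)=(t-s)L$ for $s\le t$. The plan is to show that every sequence $t_n\to t$ has a subsequence along which $\gamma_{t_n}\to\gamma_t$. Combined with a suitable sequential-compactness argument in the relevant compact set, this gives continuity.

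\textbf{Step 1: trap the path in a compact set.} For every $t$ we have $\gamma_0\le\gamma_t\le\gamma_1$, so the image of $\gamma$ lies in the causal emerald $J(\{\gamma_0\},\{\gamma_1\})$. By global hyperbolicity (property 4) this emerald is compact. Given $t_n\to t$, any subnet of $\gamma_{t_n}$ therefore has a further subnet converging to some $z$ in the emerald. If the topology is not first countable I would work with nets throughout. It then suffices to show that every such cluster point satisfies $z=\gamma_t$.

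\textbf{Step 2: identify cluster points via upper semicontinuity.} For each fixed $s<t$ we have $\ell(\gamma_s,\gamma_{t_n})=(t_n-s)L$ for large $n$. Upper semicontinuity (property 3) gives $\ell(\gamma_s,z)\ge (t-s)L$. Similarly, for $s>t$ we get $\ell(z,\gamma_s)\ge (s-t)L$; in particular $\ell(\gamma_0,z)\ge tL$ and $\ell(z,\gamma_1)\ge(1-t)L$. The reverse triangle inequality then forces
\[L=\ell(\gamma_0,\gamma_1)\ge \ell(\gamma_0,z)+\ell(z,\gamma_1)\ge L,\]
so all of these inequalities are equalities. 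Hence $z$ lies "on a maximizing chain" between $\gamma_s$ and $\gamma_{s'}$ for every $s<t<s'$, with exactly the same separations as $\gamma_t$.

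\textbf{Step 3: show $z=\gamma_t$ (the main obstacle).} Equal time separations do not by themselves force equality of points, so this step needs more structure. The first attempt would use properties 1 and 2. Join $\gamma_0$ to $z$ and $z$ to $\gamma_1$ by age-maximizing curves; these are timelike by property 2, since $tL>0$ and $(1-t)L>0$ when $0<t<1$. Their concatenation is age-maximizing from $\gamma_0$ to $\gamma_1$. I would then try to compare this curve with $\gamma$. If the topology is Hausdorff (for instance metrizable), the tension is as follows. Suppose $z\ne\gamma_t$. Then we have two distinct points with identical separations from every $\gamma_s$. I would try to exploit the chronological relation $\gamma_s\ll z\ll\gamma_{s'}$ and shrink $s,s'\to t$. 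The cluster point $z$ lies in $\bigcap_{s<t<s'}J(\gamma_s,\gamma_{s'})$, an intersection of nested compact sets. The idea is to show that this intersection is exactly $\{\gamma_t\}$, using the fact that $\ell(\gamma_s,\gamma_{s'})=(s'-s)L\to 0$. Upper semicontinuity together with $\ell$ vanishing only trivially on the limit should collapse the diamonds.

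If this collapse argument fails in full generality, the fallback is to weaken the goal. I would prove continuity of a canonically chosen representative instead: I would show that the age-maximizing curve built from the cluster points, which is continuous by property 1, agrees with $\gamma$ wherever $\gamma$ is determined, and then deduce that $\gamma$ is itself continuous. The endpoint cases $t=0,1$ are handled the same way, using only one-sided estimates.
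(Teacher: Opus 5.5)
Your Steps 1 and 2 are fine. Step 3 is the only step with real content, and it is not carried out; the route you sketch cannot work as stated. With a two-sided sequence $t_n\to t$, the cluster point $z$ has no causal relation to $\gamma_t$ that you can extract. You only know $\gamma_s\ll z\ll\gamma_{s'}$ for $s<t<s'$. The concatenation of maximizers $\gamma_0\to z\to\gamma_1$ that you build is then a legitimate timelike age-maximizing curve, so property 2 gives no contradiction. Without a non-branching hypothesis, nothing forces that curve to coincide with $\gamma$.

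Likewise, the claim $\bigcap_{s<t<s'}J(\gamma_s,\gamma_{s'})=\{\gamma_t\}$ is exactly what has to be proved. Upper semicontinuity together with $\ell(\gamma_s,\gamma_{s'})\to 0$ does not give it: any $w$ in that intersection has the same separations to every $\gamma_s$ as $\gamma_t$ does, which is the ambiguity you already identified. The fallback about a ``canonical representative agreeing with $\gamma$ wherever $\gamma$ is determined'' is not an argument. $\gamma$ is given everywhere, and identifying it with another maximizer again needs a uniqueness property you don't have.

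The missing idea is to approach $t$ from one side. This costs nothing, since every sequence $t_n\to t$ has a subsequence lying on one side of $t$. Suppose $t_n\uparrow t$ with $t>0$, and take a cluster point $z$ in the compact set $J(\gamma_0,\gamma_t)$.
\begin{itemize}
\item Upper semicontinuity applied to $\ell(\gamma_0,\gamma_{t_n})=t_nL$ gives $\ell(\gamma_0,z)\ge tL$.
\item Upper semicontinuity applied to $\ell(\gamma_{t_n},\gamma_t)=(t-t_n)L\to 0$ gives $\ell(z,\gamma_t)\ge 0$.
\item The reverse triangle inequality $tL=\ell(\gamma_0,\gamma_t)\ge\ell(\gamma_0,z)+\ell(z,\gamma_t)$ then forces $\ell(z,\gamma_t)=0$.
\end{itemize}
Now concatenate age-maximizing curves from $\gamma_0$ to $z$ and from $z$ to $\gamma_t$ (property 1). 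This yields an age-maximizing curve between the timelike related points $\gamma_0\ll\gamma_t$ that contains a null piece from $z$ to $\gamma_t$. Regularity (property 2) rules this out unless $z=\gamma_t$. The case $t_n\downarrow t$ with $t<1$ is symmetric, using $J(\gamma_t,\gamma_1)$. Since $[0,1]$ is first countable, this sequential statement gives continuity.

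This is the paper's argument. The one-sided limit makes the cluster point causally related to $\gamma_t$ itself with zero separation, which is precisely the configuration regularity forbids. Your two-sided cluster point never reaches that configuration, so your available hypotheses never engage.
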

{\bf Remark:} Any topological causal space satisfying the property {\it 2} is said to be {\it regular}. Notice that if we add the condition that $\ell$ does not attain $\infty$, then by \Cref{agemaxtogeodesic}, every age-maximizing curve is a geodesic, and so regularity is satisfied.

\begin{proof}
Let $\gamma$ be a timelike geodesic path with finite age. Let $s\in(0,1]$. Then $J(\gamma_0, \gamma_s)$ is compact, and $\gamma_t\in J(\gamma_0,\gamma_s)$ for all $t\in (0,s)$. Let $\{t_n\}$ be a sequence which increases to $s$. By compactness, $\gamma_{t_n}$ admits some subsequential limit point $p\in J(\gamma_0, \gamma_s)$. Upper semicontinuity of $\ell$ then confirms that $\ell(\gamma_0, p)\geq \ell(\gamma_0, \gamma_s)$ and $\ell(p, \gamma_s)\geq 0$. 

We may find two age-maximizing curves $\sigma_1$ and $\sigma_2$ joining $\gamma_0$ to $p$, and joining $p$ to $\gamma_s$ respectively. Concatenating these curves gives a curve $\sigma:= \sigma_1\oplus \sigma_2$ such that 
\[\age(\sigma)= \age(\sigma_1)+\age(\sigma_2)= \ell(\gamma_0, p)+\ell(p, \gamma_1)\geq \ell(\gamma_0, \gamma_s).\]

Since of course $\age(\sigma)\leq \ell(\gamma_0, \gamma_s)$, we obtain equality in all above inequalities, which verifies that $\sigma$ is an age maximizing curve joining $\gamma_0$ to $\gamma_1$, and moreover gives that $\ell(\gamma_0, p)=\ell(\gamma_0, \gamma_s)$ and $\ell(p, \gamma_s)= 0$. But this last equality violates regularity unless $p=\gamma_s$. Arbitrariness of the (sub)sequence $\{t_n\}$ then shows that $\displaystyle\lim_{t\to s^-} \gamma_t= \gamma_s$ for all $s\in (0,1]$. A similar proof shows that $\displaystyle\lim_{t\to s^+}\gamma_t= \gamma_s$ for all $s\in [0,1)$. Thus $\gamma$ is continuous. 

\end{proof}

\begin{defn}[Synthetic spacetime]\label{spacetimedef} We say that a topological causal space $X= (X,\ell,\tau)$ is a {\it synthetic spacetime} if the following hold: 
\begin{enumerate}
\item  The topology $\tau$ is Polish;
\item The time separation $\ell$ is upper semicontinuous and does not attain $+\infty$;
\item ({\it Causally geodesic}) Every pair of causally related points can be joined by an causal geodesic curve.
\end{enumerate}
\end{defn}

Throughout this paper, we will be working in synthetic spacetimes which are globally hyperbolic and in which $\ell_+:= \max\{\ell, 0\}$ is continuous. However we would also like to restrict our definition to those properties which are inherited by the space of probability measures (see \Cref{spacetimestructure}), and the two properties noted above are not in this class.

{\bf Remark:} (Comparison with other Lorentzian synthetic spaces). As we said, our definition of spacetimes are close to the `Polish metric spacetimes' of \cite{Octet}.\ For them, the basic working assumptions are Polishness of the topology, antisymmetry of the causal order and so called {\it forward completeness}: every sequence which is increasing and bounded above in the causal order attains a limit. For us, antisymmetry is guaranteed by the fact that $\ell$ does not attain $+\infty$, and completeness (both forward and backward) will be implied by global hyperbolicity. 

We are somewhat further away from the Lorentzian length spaces of Kunzinger and S\"amann. There are some technical assumptions that stand in the way of drawing a general equivalence, but in light of \cite{McCannII}, we can say that our globally hyperbolic synthetic spacetime becomes their globally hyperbolic Lorentzian length space if one first fixes a metric $d$ which generates the topology, and then adds lower semicontinuity of $\ell_+:= \max\{\ell, 0\}$, and non-emptiness of the sets $I^\pm(x)$ for all $x\in X$. One should also note that for them, curves are always $d$-Lipschitz continuous.

\subsection{Optimal Transport on Lorentzian Synthetic Spaces}

\subsubsection{Topology on Probability Measures}
A {\it measured topological causal space} is a quadruple $(X,\ell,\tau, \m)$ consisting of a topological causal space $(X,d,\ell)$ and a Radon measure $\m$ with full support, meaning $\spt\m=X$. Given $(X,\ell, \tau, \m)$, we let $\Pcal(X)$ denote the set of (Borel) probability measures on $X$, and $\Pcal_c^{ac}(X)\subset \Pcal(X)$ the set of probability measures with compact support which are absolutely continuous with respect to $\m$. The space $\Pcal(X)$ is equipped with the topology of {\it narrow convergence}. We say $\mu^n$ narrowly converges to $\mu$, and write $\mu^n\narrowto\mu$, if 
\[\int_X f(x)d\mu^n(x)\to \int f(x)d\mu(x) \quad \forall f\in C_b(X),\]
where $C_b(X)$ denotes the set of continuous bounded functions $f:X\to \R$. A useful characterization of narrow convergence is given by {\it Portmanteau's theorem}\footnote{In \cite[pp. 26-27]{BraunMcCann}, Braun and McCann suggest that 'Portmanteau's Theorem' is a misattribution. Billingsley attributes this result to Portmanteau, but the referenced paper and author allegedly don't exist. On the other hand, a proof does appear in an earlier paper by A.D. Alexandroff \cite[\S 16]{Alexandroff}-- following Bogachev's suggestion \cite[pp 454]{Bogachev}, Braun and McCann call it {\it Alexandroff's Theorem}.}: 
\begin{prop}[Portmanteau]
Let $(X,\ell, \tau, \m)$ be a measured metrizable causal space. Let $\{\mu^n\}_{n=1}^\infty\subset \Pcal(X)$, and $\mu\in X$. Then the following are equivalent: 
\begin{enumerate}
\item $\mu^n\narrowto \mu$;
\item For all  lower semicontinuous bounded functions $f:X\to \R$,  \[\liminf_{n\to\infty}\int fd\mu^n \geq \int fd\mu; \]
\item For all upper semicontinuous bounded functions $f:X\to \R$, 
\[\limsup_{n\to\infty}\int fd\mu^n\leq \int f d\mu.\]
\end{enumerate}
\end{prop}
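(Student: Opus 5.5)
We prove the cycle $1\Rightarrow 2\Rightarrow 1$, $1\Rightarrow 3\Rightarrow 1$. Since $X$ is metrizable, we fix a metric $d$ generating $\tau$. For $f$ bounded and lower semicontinuous, we approximate it from below by the Lipschitz regularizations
\[f_k(x):=\inf_{y\in X}\{f(y)+k\,d(x,y)\}.\]
Each $f_k$ is bounded (by $\inf f$ and $\sup f$), continuous (indeed $k$-Lipschitz), and the sequence increases pointwise to $f$. The pointwise convergence is exactly where lower semicontinuity of $f$ is used. Statement 3 follows from statement 2 applied to $-f$, and conversely, so statements 2 and 3 are equivalent. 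Moreover, a function in $C_b(X)$ is both lower and upper semicontinuous, so statements 2 and 3 together give $\liminf\int f\,d\mu^n\ge\int f\,d\mu\ge\limsup\int f\,d\mu^n$, which is $1$.

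The main step is $1\Rightarrow 2$. Fix $k$. Since $f_k\le f$ and $f_k\in C_b(X)$, narrow convergence gives
\[\liminf_{n}\int f\,d\mu^n\ \ge\ \lim_n\int f_k\,d\mu^n=\int f_k\,d\mu .\]
Letting $k\to\infty$, monotone convergence applies because the $f_k$ are bounded below uniformly by $\inf f$ and $\mu$ is a probability measure. This gives $\int f_k\,d\mu\uparrow\int f\,d\mu$, hence $2$.

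The only delicate point is the pointwise convergence $f_k\uparrow f$. Monotonicity in $k$ and $f_k\le f$ are immediate from the definition. For $x$ fixed and $\epsilon>0$, choose $\delta>0$ with $f>f(x)-\epsilon$ on the ball $B_\delta(x)$. For $y\in B_\delta(x)$ we have $f(y)+k\,d(x,y)\ge f(x)-\epsilon$. For $y\notin B_\delta(x)$ we have $f(y)+k\,d(x,y)\ge\inf f+k\delta$. Hence $f_k(x)\ge\min\{f(x)-\epsilon,\inf f+k\delta\}$, and this is at least $f(x)-\epsilon$ once $k$ is large, since $f$ is bounded. No causal structure is used anywhere in the argument; only metrizability of $\tau$ matters.
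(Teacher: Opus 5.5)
Your proof is correct and complete. The inf-convolutions $f_k$ are $k$-Lipschitz and satisfy $\inf f\le f_k\le f$, and they increase in $k$. Your $\delta$-ball estimate is exactly where lower semicontinuity and the lower bound on $f$ are used to get $f_k\uparrow f$. Narrow convergence plus monotone convergence then gives (2), and the substitution $f\mapsto -f$ handles (3) and the return to (1). In fact (2) alone already yields (1), by applying it to $f$ and $-f$ for $f\in C_b(X)$.

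The paper gives no proof of this proposition; it quotes it as the classical Portmanteau/Alexandroff theorem, so there is no argument of the paper's to match. A common textbook route instead works with sets first. It proves $\limsup_n\mu^n(F)\le\mu(F)$ for closed $F$ using the cut-offs $\max\{0,1-k\,d(x,F)\}$. It then passes to bounded upper semicontinuous $f$ by a layer-cake formula over the closed superlevel sets $\{f\ge t\}$ together with Fatou's lemma. Your Baire-type Lipschitz approximation goes directly to functions and is shorter. The set-based route has the advantage of producing the open/closed-set characterizations along the way.

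You also correctly read the typo ``$\mu\in X$'' as $\mu\in\Pcal(X)$. You are right that neither $\ell$ nor $\m$ plays any role: only metrizability of $\tau$ is used, not even separability.
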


Narrow compactness is guaranteed by Prokhorov's theorem: 
\begin{defn}[Tightness]
 A set $M\subset \Pcal(X)$ is called {\it tight} if for each $\epsilon>0$, there is a compact set $K\subset X$ such that for all $\mu\in M$, 
\[\mu(X\setminus K)<\epsilon.\]
\end{defn}

\begin{prop}[Prokhorov]
Let $X$ be separable. A set $M\subset \Pcal(X)$ is narrowly precompact iff it is tight. 
\end{prop}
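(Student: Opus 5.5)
The statement is Prokhorov's theorem for a separable (and, in our setting, Polish) space $X$. I will prove the two implications separately, through the standard route of embedding probability measures into the dual of a space of continuous functions.

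\emph{Tight $\Rightarrow$ narrowly precompact.} Let $M$ be tight. Since $\tau$ is metrizable and separable, fix a metric $d$ and embed $(X,d)$ homeomorphically into a compact metric space $\hat X$, for instance via the Urysohn embedding into the Hilbert cube $[0,1]^{\N}$. Each $\mu\in M$ pushes forward to a probability measure on $\hat X$. By Riesz representation, $\Pcal(\hat X)$ is the set of positive normalized functionals on $C(\hat X)$. Banach--Alaoglu, together with separability of $C(\hat X)$, gives weak-$*$ sequential compactness, so any sequence $\mu^n\in M$ has a subsequence converging narrowly on $\hat X$ to some $\hat\mu\in\Pcal(\hat X)$. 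Tightness now prevents mass from escaping to $\hat X\setminus X$. For each $\epsilon$ choose a compact $K_\epsilon\subset X$ with $\mu^n(K_\epsilon)\ge 1-\epsilon$. The set $K_\epsilon$ is compact, hence closed, in $\hat X$, so the upper-semicontinuous (closed-set) half of Portmanteau gives $\hat\mu(K_\epsilon)\ge\limsup\mu^n(K_\epsilon)\ge1-\epsilon$. Therefore $\hat\mu$ is concentrated on $\bigcup_k K_{1/k}\subset X$ and restricts to some $\mu\in\Pcal(X)$. Finally, $\mu^n\narrowto\mu$ in $X$: a closed set $F\subset X$ satisfies $F=\bar F^{\hat X}\cap X$, so the closed-set criterion transfers from $\hat X$ to $X$. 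Because the narrow topology on $\Pcal(X)$ is metrizable (e.g.\ by the L\'evy--Prokhorov metric), sequential precompactness suffices.

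\emph{Narrowly precompact $\Rightarrow$ tight.} This direction needs completeness, so I use that $\tau$ is Polish, as it is throughout the paper. Let $\{x_j\}$ be a countable dense set and $B_{k,j}$ the open balls of radius $1/k$ about $x_j$. The key claim is that for each $k$ and $\epsilon$ there is an $n_k$ with $\mu\big(\bigcup_{j\le n_k}B_{k,j}\big)>1-\epsilon2^{-k}$ for all $\mu\in M$. Suppose this fails. Then there are $\mu_n\in M$ with $\mu_n(\bigcup_{j\le n}B_{k,j})\le1-\epsilon2^{-k}$. Pass to a narrowly convergent subsequence with limit $\mu$; the limit exists because the closure of $M$ is compact and metrizable. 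The lower-semicontinuous (open-set) half of Portmanteau then gives, for each fixed $m$, $\mu(\bigcup_{j\le m}B_{k,j})\le\liminf_n\mu_n(\bigcup_{j\le n}B_{k,j})\le1-\epsilon2^{-k}$. Letting $m\to\infty$ contradicts $\bigcup_j B_{k,j}=X$. Now set $K=\bigcap_k\overline{\bigcup_{j\le n_k}B_{k,j}}$. This set is closed and totally bounded, hence compact by completeness, and $\mu(X\setminus K)<\epsilon$ for every $\mu\in M$.

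The main obstacle is the first direction, specifically identifying the limit $\hat\mu$ as a measure on $X$ and checking that narrow convergence on $\hat X$ implies narrow convergence on $X$, since bounded continuous functions on $X$ need not extend to $\hat X$. I handle this with the closed-set Portmanteau criterion rather than test functions, which is exactly why the equivalence of Portmanteau's conditions stated above is the tool to use.
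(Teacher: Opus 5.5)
Your proof is correct. There is nothing in the paper to compare it against: the paper quotes this as the classical Prokhorov theorem and gives no proof. Your argument is the standard one, with the Hilbert-cube compactification and the closed-set form of Portmanteau for the direct half, and Billingsley's finite ball-covering argument for the converse.

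Your appeal to completeness in the converse is not just convenient. It is necessary, and it corrects the statement as written. With separability alone, ``narrowly precompact $\Rightarrow$ tight'' fails. Take $X\subset[0,1]$ with Lebesgue inner measure $0$ and outer measure $1$ (a Bernstein set). Then $\mu(B\cap X):=\lambda(B)$ is a well-defined Borel probability measure on $X$, and it gives mass $0$ to every compact subset of $X$. So the compact set $\{\mu\}$ is not tight. Since every synthetic spacetime is Polish by definition, nothing in the paper is affected, but the hypothesis should read ``Polish'' rather than ``separable''.

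Two small bookkeeping points:
\begin{itemize}
\item When you restrict $\hat\mu$ to $X$, set $\mu(B):=\hat\mu(\iota(B)\cap S)$ with $S=\bigcup_k\iota(K_{1/k})$. This set is Borel in $\hat X$ even when $\iota(X)$ is not.
\item The implication ``closed-set criterion $\Rightarrow$ narrow convergence'' that you use to transfer the convergence back to $X$ is not one of the three conditions the paper lists, which are phrased via semicontinuous functions. It is the standard Portmanteau statement. It follows in one line from $\int f\,d\nu=\int_0^1\nu(\{f\ge t\})\,dt$ for $0\le f\le 1$ together with reverse Fatou.
\end{itemize}
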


\begin{prop}[Condition for tightness]
Let $X$ be a compact metric space. Then any collection of measures is tight. 
\end{prop}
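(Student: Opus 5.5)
The claim is that if $X$ is a compact metric space, then every family $M\subset\Pcal(X)$ is tight. The plan is to check the definition of tightness directly, using the whole space as the witnessing compact set.

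Fix an arbitrary $\epsilon>0$ and choose $K:=X$. This set is compact by hypothesis. For every $\mu\in M$ the complement is empty, so
\[
\mu(X\setminus K)=\mu(\emptyset)=0<\epsilon .
\]
Since $K$ was chosen independently of both $\epsilon$ and $\mu$, the condition in the definition of tightness holds uniformly over $M$, and $M$ is tight.

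None of the steps is difficult; the only point to confirm is that $K=X$ is a legitimate choice in the definition, which it is precisely because $X$ is assumed compact. As a consequence, combining this with Prokhorov's theorem (note that a compact metric space is separable) shows that $\Pcal(X)$ is narrowly precompact whenever $X$ is compact.
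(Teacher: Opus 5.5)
Your proof is correct: taking $K=X$ gives $\mu(X\setminus K)=0<\epsilon$ for every $\mu$ in the family, which is the standard one-line argument behind this statement. The paper states the proposition without proof, so there is no different route to compare.
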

It is well known that on metric spaces, the narrow topology is metrizable. Given a metrizer $d$ of $X$ and $p\in [1, \infty)$, we define the {\it $p$-Wasserstein metric}{\footnote{Another misattribution. It is well known that Kantorovich and Rubinstein were the first to study this metric, and Wasserstein never worked with it at all.} $W_p:\Pcal(X)^2\to [0,\infty]$ as 
\begin{align}\label{W_Pdefn}
W_p(\mu,\nu):= \inf_{\pi\in\Pi(\mu,\nu)} \left(\int_{X\times X}d^p(x,y)d\pi(x,y)\right)^{1/p} 
\end{align}
 where $\Pi(\mu,\nu):= \{\pi\in \Pcal(X\times X): (p_1)_\#\pi=\mu, (p_2)_\#\pi=\nu\}$ denotes the set of {\it admissible couplings} between $\mu$ and $\nu$. Here $p_1(x,y):=x$ and  $p_2(x,y)=y$ denote the projection maps onto the first and second component, respectively. 
 \begin{prop}[Topology of Wasserstein space, \text{\cite[Prop. 7.1.5]{AGS}}]
 Let $(X,d)$ be a metric space, and $p\in [1,\infty)$. We have the following 
 \begin{enumerate}
 \item If $(X,d)$ is Polish, then $(\Pcal(X),W_p)$ is Polish. 
 \item For any sequence $\{\mu_n\}\subset \Pcal(X)$ and $\mu\in \Pcal(X)$, $\mu_n\xrightarrow{W_p}\mu$ iff $\mu_n\narrowto \mu$ and ${\sup_n\int d^p(x,x_0)d\mu^n<\infty}$ for some, and hence any, $x_0\in X$.

 \end{enumerate}
 In particular, if $X$ is compact, then convergence in $W_p$ is equivalent to narrow convergence. 
 \end{prop}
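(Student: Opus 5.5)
We would prove the three assertions in the order (2), then (1), with the final remark following immediately from (2): if $X$ is compact, $d$ is bounded by $\diam X<\infty$, so the moment condition $\sup_n\int d^p(x,x_0)\,d\mu^n\leq(\diam X)^p$ holds automatically. Throughout we work on $\Pcal_p(X):=\{\mu:\int d^p(x,x_0)d\mu<\infty\}$, where $W_p$ is finite; we take for granted that $W_p$ is a metric there. The triangle inequality is proved by gluing couplings. Existence of optimal couplings in (\ref{W_Pdefn}) follows from Prokhorov's theorem: $\Pi(\mu,\nu)$ is tight because its marginals are tight, which holds on a Polish space. Then apply Portmanteau to the lower semicontinuous cost $d^p$, truncated as $\min\{d^p,k\}$ and letting $k\to\infty$.

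For (2), first the forward direction. If $W_p(\mu_n,\mu)\to0$, then for each bounded Lipschitz $f$ we get $|\int f\,d\mu_n-\int f\,d\mu|\leq \mathrm{Lip}(f)\,W_1\leq \mathrm{Lip}(f)\,W_p$ by Hölder, with a truncation if needed. Bounded Lipschitz functions determine narrow convergence on metric spaces, so $\mu_n\narrowto\mu$. The moment bound follows from the triangle inequality $W_p(\mu_n,\delta_{x_0})\leq W_p(\mu_n,\mu)+W_p(\mu,\delta_{x_0})$, since $W_p(\nu,\delta_{x_0})^p=\int d^p(x,x_0)\,d\nu$.

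For the converse we would use the stronger property of uniformly integrable $p$-moments, namely $\lim_{R\to\infty}\sup_n\int_{d(x,x_0)>R}d^p\,d\mu_n=0$. This is what AGS actually uses, and it is the main subtlety: a mere uniform bound on the moments does not suffice. For example, $\mu_n=(1-1/n)\delta_{x_0}+\frac1n\delta_{x_n}$ with $d(x_n,x_0)^p=n$ has bounded moments but does not converge in $W_p$. So we would read the moment condition in the statement as uniform integrability, equivalently $\int d^p(\cdot,x_0)\,d\mu_n\to\int d^p(\cdot,x_0)\,d\mu$. Given this, choose optimal $\pi_n\in\Pi(\mu_n,\mu)$. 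The family $\{\pi_n\}$ is tight, so a subsequence converges narrowly to some $\pi\in\Pi(\mu,\mu)$. Stability of optimality (cyclical monotonicity passes to narrow limits) shows that $\pi$ is optimal, hence $\pi=(\id,\id)_\#\mu$. Finally, split $d^p=\min\{d^p,k\}+(d^p-k)_+$. The first piece converges by narrow convergence to $\int\min\{d^p,k\}\,d\pi=0$. The second is controlled uniformly in $n$ by uniform integrability, using $d^p(x,y)\leq 2^{p-1}(d^p(x,x_0)+d^p(y,x_0))$. Hence $W_p(\mu_n,\mu)\to0$ along the subsequence, and therefore along the whole sequence, since every subsequence has such a further subsequence.

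For (1), separability comes from taking finite convex combinations with rational weights of Dirac masses at points of a countable dense set $D\subset X$. To approximate a given $\mu$, truncate its tail, which is allowed by the finite $p$-moment. Then push the remainder forward by a measurable nearest-point map into a finite subset of $D$ at scale $\epsilon$; this move costs at most $\epsilon$ in $W_p$. Completeness is the harder part. A $W_p$-Cauchy sequence is tight: given $\epsilon$, most of its mass lies within $W_p$-distance $\epsilon$ of finitely many $\mu_n$, and those measures are tight. It also has uniformly integrable $p$-moments, by the same approximation argument. By Prokhorov there is a narrowly convergent subsequence with some limit $\mu$. Lower semicontinuity of $W_p$ under narrow convergence gives $W_p(\mu_n,\mu)\leq\liminf_m W_p(\mu_n,\mu_m)\to0$. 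We expect the tightness of Cauchy sequences to be the main obstacle, since it requires combining the totally-bounded-plus-complete characterization of compactness in $X$ with the mass-transport estimates. This is exactly where we would follow \cite[Prop. 7.1.5]{AGS} most closely.
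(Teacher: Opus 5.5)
Your proposal is correct. The paper gives no proof of this proposition; it only cites \cite[Prop.~7.1.5]{AGS}. So the relevant comparison is with that reference, and your outline is the standard argument given there: optimal couplings $\pi_n\in\Pi(\mu_n,\mu)$ subconverge to an optimal coupling in $\Pi(\mu,\mu)$, hence to $(\id,\id)_\#\mu$, and a truncation of $d^p$ is controlled by uniformly integrable moments.

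More importantly, you are right that the statement as printed is false. Your example $\mu_n=(1-\frac1n)\delta_{x_0}+\frac1n\delta_{x_n}$ with $d^p(x_n,x_0)=n$ has $\mu_n\narrowto\delta_{x_0}$ and $\sup_n\int d^p(x,x_0)\,d\mu_n=1$, yet $W_p(\mu_n,\delta_{x_0})=1$ for all $n$. The cited result actually uses uniformly integrable $p$-moments, not merely bounded ones. It is also stated on $\Pcal_p(X)$ rather than $\Pcal(X)$: on all of $\Pcal(X)$, $W_p$ can equal $+\infty$, so part 1 must also be read on $\Pcal_p(X)$, as you do. The compact case, which is the only one the paper uses, is unaffected.

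A few points to tighten.
\begin{enumerate}
\item Under your corrected reading, the forward direction of part 2 must deliver uniform integrability, not just a uniform bound. Your triangle-inequality step already gives this if you write it as $|W_p(\mu_n,\delta_{x_0})-W_p(\mu,\delta_{x_0})|\le W_p(\mu_n,\mu)$. Then $\int d^p(\cdot,x_0)\,d\mu_n\to\int d^p(\cdot,x_0)\,d\mu$, and together with narrow convergence this is equivalent to uniformly integrable $p$-moments.
\item Your converse in part 2 uses tightness of the narrowly convergent family $\{\mu_n\}\cup\{\mu\}$, and hence of $\{\pi_n\}$. That is a Polish-space fact, whereas part 2 is asserted for an arbitrary metric space. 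You should either assume $X$ Polish there as well or state the restriction.
\item In the separability argument, the tail should be cut off outside a \emph{compact} set $K$. Choose $K$ by tightness so that $\int_{X\setminus K}d^p(\cdot,x_0)\,d\mu$ is small, and send that mass to $x_0$. Cutting off outside a ball is not enough, because a metric ball need not be totally bounded and so need not admit a finite $\epsilon$-net in $D$.
\end{enumerate}
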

\subsubsection{The Causal Structure on Probability Measures}
We now define the causal structure on $\Pcal(X)$. For $q\in(0,1)$, we define the {\it $q$-Eckstein-Miller time separation}, $\ell_q$, as 
\begin{align}\label{l_qdefn}
\ell_q(\mu_0,\mu_1):= \sup_{\pi\in \Pi(\mu_0,\mu_1)} \left(\int_{X\times X} \ell^q(x,y)d\pi(x,y)\right)^{1/q},
\end{align}
with the convention $(-\infty)^q=(-\infty)^{1/q}= -\infty$. The name is in reference to \cite[Def.\ 3]{EcksteinMiller}, who to our knowledge were the first to introduce a time separation of this form. 
\begin{lemma}
Let $(X,\ell,\tau)$ be a metrizable causal space. For each $q\in (0, 1)$, $\ell_q$ is a time separation.
\end{lemma}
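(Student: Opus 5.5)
We have to check the two axioms of a causal space for $\ell_q$ on $\Pcal(X)$: that it takes values in $\{-\infty\}\cup[0,\infty]$, causal reflexivity, and the reverse triangle inequality. First note the range. If some coupling $\pi$ is concentrated on $X^2_\leq$ (up to null sets), then $\ell^q=\ell_+^q\ge 0$ $\pi$-a.e., so the integral lies in $[0,\infty]$. Otherwise every coupling charges $\{\ell=-\infty\}$ with positive mass, each integral is $-\infty$ under the stated convention, and the supremum is $-\infty$. So $\ell_q\in\{-\infty\}\cup[0,\infty]$.

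For reflexivity, take $\mu\in\Pcal(X)$ and the diagonal coupling $\pi=(\id,\id)_\#\mu\in\Pi(\mu,\mu)$. Since $\ell(x,x)\ge 0$ for all $x$, the integrand is nonnegative, so $\ell_q(\mu,\mu)\ge 0$.

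For the reverse triangle inequality $\ell_q(\mu_0,\mu_2)\ge \ell_q(\mu_0,\mu_1)+\ell_q(\mu_1,\mu_2)$, we may assume both terms on the right are $\ge 0$; otherwise the right side is $-\infty$ and there is nothing to prove. Fix $\epsilon>0$ and pick couplings $\pi_{01}\in\Pi(\mu_0,\mu_1)$ and $\pi_{12}\in\Pi(\mu_1,\mu_2)$ concentrated on $X^2_\leq$ whose $q$-integrals are within $\epsilon$ of the suprema, or arbitrarily large if a supremum is $+\infty$. Since $X$ is metrizable, and we use Polishness/Radon-ness to make disintegration available, the gluing lemma gives $\sigma\in\Pcal(X^3)$ with $(p_{12})_\#\sigma=\pi_{01}$ and $(p_{23})_\#\sigma=\pi_{12}$. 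Then $\pi_{02}:=(p_{13})_\#\sigma\in\Pi(\mu_0,\mu_2)$. For $\sigma$-a.e. $(x,y,z)$ we have $x\le y\le z$, so $\ell(x,z)\ge\ell(x,y)+\ell(y,z)\ge 0$.

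The key analytic step is the reverse Minkowski inequality for exponents $q\in(0,1)$ applied to nonnegative functions $f=\ell(x,y)$ and $g=\ell(y,z)$ on $(X^3,\sigma)$:
\[\Big(\int (f+g)^q\,d\sigma\Big)^{1/q}\ \ge\ \Big(\int f^q\,d\sigma\Big)^{1/q}+\Big(\int g^q\,d\sigma\Big)^{1/q}.\]
This follows from concavity of $t\mapsto t^q$, or equivalently from the fact that $\|\cdot\|_q$ is superadditive on nonnegative functions when $q<1$; infinite cases are trivial. Combining it with monotonicity of $t\mapsto t^q$ gives
\[\ell_q(\mu_0,\mu_2)\ge \Big(\int\ell^q\,d\pi_{02}\Big)^{1/q}\ge \ell_q(\mu_0,\mu_1)+\ell_q(\mu_1,\mu_2)-2\epsilon',\]
and we let $\epsilon\to0$.

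The main obstacle is measure-theoretic. The gluing lemma needs disintegration, which is fine on Polish (or Radon) spaces; with mere metrizability I would restrict to the Radon/Polish case as in the working assumptions. I would also check Borel measurability of $\ell$ (upper semicontinuity suffices), so that the integrals make sense.
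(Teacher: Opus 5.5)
Your proposal is correct and follows the same route as the paper. For causal reflexivity you use the diagonal coupling $(\id,\id)_\#\mu$; for the reverse triangle inequality you glue two near-optimal couplings along the shared marginal by disintegration and then apply the pointwise reverse triangle inequality together with the reverse Minkowski inequality for $q\in(0,1)$. Your extra checks make explicit several points the paper leaves implicit: the range of $\ell_q$, the case of an infinite supremum, that the near-optimal couplings are concentrated on $X^2_\leq$, and that disintegration and Borel measurability of $\ell$ need more than bare metrizability.
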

\begin{proof}
Let $\mu\in \Pcal(X)$. Then $(\id,\id)_\#\mu$ is an admissible plan, and therefore 
\[\ell_q(\mu,\mu)\geq \left(\int \ell^q(x,x)d\mu(x)\right)^{1/q}\geq 0.\]

Now we prove the reverse triangle inequality. Let $\mu, \nu, \upsilon\in \Pcal(X)$. We want to show that 
\[\ell_q(\mu,\nu)\geq \ell_q(\mu, \upsilon)+\ell_q(\upsilon, \nu). \]
WLOG assume that $\ell_q(\mu,\upsilon)\geq 0$ and $\ell_q(\upsilon, \nu)\geq 0$. In particular, for each $\epsilon>0$, there must exist admissible couplings $\pi_1^n\in \Pi(\mu,\upsilon)$, $\pi_2^n\in \Pi(\upsilon, \nu)$ such that 
\[\ell_q(\mu, \upsilon)\leq \left(\int \ell^q(x,y)d\pi_1^n(x,y)\right)^{1/q}+\epsilon; \quad \ell_q(\upsilon, \nu) \leq \left(\int\ell^q(x,y)d\pi_2^n(x,y)\right)^{1/q}+\epsilon.\]

Disintegrate $\pi_1^n$ along $p_2$, and $\pi_2^n$ along $p_1$ to get $d\pi_1^n:=  (\pi_1^n)_x d\eta(x)$ and $d\pi_2^n :=  (\pi_2^n)_x d\eta(x)$, and define 
\[\bar \pi:= \int (\pi_1^n)_x\otimes (\pi_2^n)_x d\eta(x) \in \Pcal(X\times X\times X)\]
We define $\pi:= (p_{1,3})_\# \bar \pi$, where $p_{1,3}(x,y,z)= (x,z)$. One can check by applying the projection maps that $\pi\in \Pi(\mu, \nu)$. From Minkowski's reverse inequality, we then see that 
\begin{align*}
\ell_q(\mu, \nu) &\geq \left(\int \ell^q(x,y) d\pi(x,y)\right)^{1/q} = \left(\int \ell^q(x,z) d\bar \pi(x,y,z)\right)^{1/q}\\
&\geq \left(\int \ell^q(x,y) d\bar \pi(x,y,z)\right)^{1/q}+\left(\int \ell^q(y,z)d\bar \pi(x,y,z)\right)^{1/q}\\
&= \ell_q(\mu, \upsilon)+\ell_q(\upsilon,\nu)-2\epsilon.
\end{align*}
As $\epsilon$ was arbitrary, we achieve the result. 
\end{proof}

 Thus for each $q\in (0,1)$, $(\Pcal(X), \ell_q)$ is a metrizable causal space when equipped with the narrow topology. We will see later that when $X$ is a compact synthetic spacetime, $\Pcal(X)$ is also a compact synthetic spacetime (\Cref{spacetimestructure}). 
 
\begin{defn}[Optimal couplings] Given $\mu,\nu\in \Pcal(X)$, we call $\pi\in \Pi(\mu,\nu)$ a {\it $q$-optimal coupling} if $\pi$ attains the supremum in \eqref{l_qdefn}. The set of $q$-optimal couplings between $\mu$ and $\nu$ is denoted $\Opt_q(\mu, \nu)$. A coupling $\pi$ is {\it timelike} if it is concentrated on $X_\ll^2$, and the set of timelike $q$-optimal couplings is denoted $\TOpt_q(\mu,\nu)$. 
\end{defn}

When $\ell$ is upper semicontinuous, standard optimal transport techniques then give the existence of $q$-optimal couplings under certain conditions outlined below. 

\begin{defn}[Dualizability]
We say a pair of measures $(\mu, \nu)\in\Pcal(X)^2$ are {\it dualizable} if there exists lower semi-continuous functions $a\in L^1(\mu)$ and $b\in L^1(\nu)$ such that $\ell(x,y)\leq a(x)+b(y)$. 
\end{defn}

\begin{prop}[Existence of optimal couplings, \text{\cite[Lems.\ 4.3,4.4]{Villani}}] Let $(X,\ell, \tau, \m)$ be a measured topological causal space with $\ell$ upper-semicontinuous. Then for any dualizable pair $(\mu,\nu)\in \Pcal(X)^2$, $\Opt_q(\mu, \nu)$ is non-empty. 
\end{prop}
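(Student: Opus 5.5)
The plan is the direct method of the calculus of variations on $\Pi(\mu,\nu)$. We want a maximizer of $\pi\mapsto \int \ell^q\,d\pi$. If $\Pi(\mu,\nu)$ contains no coupling concentrated on $X^2_\leq$, then every admissible $\pi$ gives $\ell_q(\mu,\nu)=-\infty$ by the convention on $(-\infty)^q$, and every coupling is trivially optimal. So assume some coupling is concentrated on $X_\leq^2$, and restrict attention to the nonempty set of such couplings. On this set $\ell^q=\ell_+^q$ $\pi$-a.e.

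First, $\Pi(\mu,\nu)$ is tight, and hence narrowly compact by Prokhorov. Since $X$ is Polish, $\mu$ and $\nu$ are individually tight. Choose compacts $K_1,K_2$ with $\mu(X\setminus K_1),\nu(X\setminus K_2)<\epsilon/2$. Then $K_1\times K_2$ is compact and $\pi(X^2\setminus K_1\times K_2)<\epsilon$ for every $\pi\in\Pi(\mu,\nu)$. Closedness of $\Pi(\mu,\nu)$ follows because the marginal constraints pass to narrow limits: test against $f(x)$ and $g(y)$ with $f,g\in C_b(X)$.

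Next, take a maximizing sequence $\pi_n$ of causal couplings and extract a narrow limit $\pi$. Two things must then be checked.

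\textbf{(a) The limit is causal.} $X^2_\leq=\{\ell\geq 0\}$ is closed by upper semicontinuity of $\ell$, so by Portmanteau $\pi(X_\leq^2)\geq\limsup_n\pi_n(X^2_\leq)=1$.

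\textbf{(b) Upper semicontinuity of the functional.} The function $\ell_+^q$ is upper semicontinuous, since $\ell_+$ is u.s.c.\ and $t\mapsto t^q$ is continuous and increasing on $[0,\infty)$. It is not bounded, however, so Portmanteau does not apply directly. This is where dualizability enters. Because $0<q<1$ we have $t^q\leq 1+t$, so
\[\ell_+^q(x,y)\leq 1+a_+(x)+b_+(y)=:c(x,y).\]
Here $a$ and $b$ are the lower semicontinuous integrable functions from dualizability (after replacing them by their positive parts, which keeps the inequality valid on $X^2_\leq$ where $\ell\geq 0$). The key feature is that $\int c\,d\pi=1+\int a_+d\mu+\int b_+d\nu$ is the same for every $\pi\in\Pi(\mu,\nu)$.

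The function $c-\ell_+^q$ is lower semicontinuous and bounded below by $0$. Truncating, $\min(c-\ell_+^q,M)$ is bounded l.s.c., so Portmanteau gives lower semicontinuity of its integral; letting $M\to\infty$ by monotone convergence yields
\[\liminf_n\int (c-\ell_+^q)\,d\pi_n\geq\int (c-\ell_+^q)\,d\pi.\]
Since $\int c\,d\pi_n=\int c\,d\pi$ is finite and constant, this rearranges to $\limsup_n\int\ell_+^q\,d\pi_n\leq\int\ell_+^q\,d\pi$.

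Combining (a) and (b), $\pi$ is admissible and attains the supremum, so $\pi\in\Opt_q(\mu,\nu)$, and in particular $\Opt_q(\mu,\nu)$ is nonempty.

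I expect step (b) to be the main obstacle: handling the unbounded integrand while allowing $a,b$ to take large values. The argument hinges on the sum structure $a(x)+b(y)$, whose integral is independent of the coupling, together with a careful truncation in the Portmanteau step. Steps (a) and the tightness/closedness argument are routine.
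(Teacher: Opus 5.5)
Your proposal is correct and is essentially the argument behind the paper's citation. You obtain tightness of $\Pi(\mu,\nu)$ from the tight marginals and then compactness via Prokhorov (Villani's Lemma 4.4), and you get upper semicontinuity of $\pi\mapsto\int\ell_+^q\,d\pi$ from the coupling-independent bound $1+a_+(x)+b_+(y)$ (Villani's Lemma 4.3, in maximization form); your handling of the $-\infty$ values and the reduction via $t^q\le 1+t$ are the natural adaptations to this setting. The only caveat is that you use Polishness of $X$, which is not in the literal hypothesis; it is implicit in the cited result and in the paper's standing assumptions.
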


A useful characterization of optimal couplings is that of cyclical monotonicity.

\begin{defn}[Cyclical monotonicity] Let $c:X^2\to \{-\infty\}\cup [0,\infty]$. A set $\Gamma\subset X^2$ is {\it $c$-cyclically monotone} if for each $N\in \N$, any $\{(x_i,y_i)\}_{i=1}^N\subset \Gamma$, and any permutation $\sigma\in S_N$ of $N$ elements, we have 
\[\sum_{i=1}^Nc(x_i, y_i)\geq \sum_{i=1}^N c(x_i, y_{\sigma(i)})\]
\end{defn}

\begin{prop}[Optimality and cyclical monotonicity, \text{\cite[Prop.\ 2.8]{CM}}]\label{cyclicallymonotoneoptimal} Let $(\mu,\nu)$ be a dualizable pair of measures, and let $\pi\in \Pi(\mu, \nu)$. If $\pi\in \Opt_q(\mu,\nu)$, then it is concentrated on some $\ell_q$-cyclically monotone set. Conversely, if $\pi$ is concentrated on some $\ell_q$-cyclically monotone set contained in $X_\ll^2$, then $\pi\in\Opt_q(\mu,\nu)$. 
\end{prop}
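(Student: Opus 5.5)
The statement is really about the cost $c:=\ell^q$, where we use the convention $(-\infty)^q=-\infty$. So I will read ``$\ell_q$-cyclically monotone'' as $\ell^q$-cyclically monotone, and follow the standard Kantorovich argument as in \cite[Prop.\ 2.8]{CM}. There are two directions.

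\textbf{Optimal $\Rightarrow$ cyclically monotone.} Let $\pi\in\Opt_q(\mu,\nu)$. If $\ell_q(\mu,\nu)=-\infty$, every coupling is optimal, and the claim needs care. I will first dispose of the case $\ell_q(\mu,\nu)\ge 0$, where $\pi$ is concentrated on $X^2_\le$ and $\ell^q$ is finite $\pi$-a.e. by dualizability. Since $\ell$ is upper semicontinuous, so is $\ell^q$, and I would take $\Gamma:=\spt\pi\cap X^2_\le$, or a Borel subset of full measure.

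Suppose $\Gamma$ is not cyclically monotone. Then there are points $(x_i,y_i)\in\Gamma$, $i=1,\dots,N$, and a permutation $\sigma$ with $\sum_i \ell^q(x_i,y_{\sigma(i)})>\sum_i\ell^q(x_i,y_i)+\eta$ for some $\eta>0$. By upper semicontinuity of $\ell^q$ at the points $(x_i,y_i)$, and lower semicontinuity of the right-hand side at the $(x_i,y_{\sigma(i)})$, I choose small neighbourhoods $U_i\times V_i$ on which this strict gap persists. This is the delicate point: we only have upper semicontinuity. So I will instead first establish the statement for $\pi$-a.e.\ tuples via the product measure $\pi^{\otimes N}$, which avoids pointwise perturbation.

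The main step is the standard perturbation. Set $m:=\min_i\pi(U_i\times V_i)>0$, which is positive because each $(x_i,y_i)\in\spt\pi$. Define
\[\tilde\pi:=\pi-\frac{\delta}{N}\sum_i \pi_i+\frac{\delta}{N}\sum_i (p_1)_\#\pi_i\otimes (p_2)_\#\pi_{\sigma(i)},\]
where $\pi_i$ is the normalized restriction of $\pi$ to $U_i\times V_i$ and $\delta<m$. Then $\tilde\pi\in\Pi(\mu,\nu)$, and $\int\ell^q\,d\tilde\pi>\int\ell^q\,d\pi$, contradicting optimality. The integrals are finite by dualizability.

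To avoid needing the inequality on whole neighbourhoods, I will use the measure-theoretic version of the argument. Consider the function
\[F(z_1,\dots,z_N)=\sum_i\ell^q(x_i,y_{\sigma(i)})-\sum_i\ell^q(x_i,y_i)\]
on $\Gamma^N$. If $F>0$ on a set of positive $\pi^{\otimes N}$-measure for some $\sigma$, the above construction applied to that set gives a strict improvement. Hence $F\le0$ holds $\pi^{\otimes N}$-a.e. A Borel modification then extracts a full-measure cyclically monotone set, as in Villani's proof of Theorem 5.10. This extraction is the step I expect to be the obstacle.

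\textbf{Cyclically monotone in $X^2_\ll$ $\Rightarrow$ optimal.} Here I plan to use Rockafellar's construction on $\Gamma\subset X^2_\ll$, where $\ell^q>0$ and is real-valued. Fix $(x_0,y_0)\in\Gamma$ and define
\[\varphi(x):=\inf\Big\{\ell^q(x_0,y_0)-\ell^q(x_1,y_0)+\dots+\ell^q(x_n,y_n)-\ell^q(x,y_n)\Big\},\]
with the infimum over chains in $\Gamma$, together with its $\ell^q$-transform $\psi$. Cyclical monotonicity gives $\varphi(x_0)\ge 0$, so $\varphi$ is not identically $-\infty$. It also gives $\varphi(x)+\psi(y)\ge\ell^q(x,y)$ everywhere, with equality on $\Gamma$. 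Dualizability supplies integrability bounds, as in \cite{Villani}.

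Then for any $\pi'\in\Pi(\mu,\nu)$,
\[\int\ell^q\,d\pi'\le\int\varphi\,d\mu+\int\psi\,d\nu=\int\ell^q\,d\pi.\]
Thus $\pi\in\Opt_q$. The role of $X^2_\ll$ is exactly to keep the potentials from taking the value $-\infty$ on $\Gamma$ and to make the subtraction legitimate. Checking measurability and integrability of $\varphi$ and $\psi$ is routine but is where the timelike hypothesis is used.
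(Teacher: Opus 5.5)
The paper gives no proof of this statement; it is imported from \cite{CM}. Judged on its own terms, your proposal has a genuine gap in each direction.

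\emph{Optimal $\Rightarrow$ monotone.} Your measure-theoretic perturbation is sound if you do it correctly. That means pushing $\pi^{\otimes N}$ restricted to $\{F_\sigma>0\}$ forward under $(z_1,\dots,z_N)\mapsto(x_i,y_{\sigma(i)})$, not using products of marginals. It then shows $F_\sigma\le 0$ $\pi^{\otimes N}$-a.e.

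The step you flag as the obstacle is a real gap. Information that holds $\pi^{\otimes N}$-a.e.\ cannot in general be upgraded to a Borel $\Gamma$ with $\pi(\Gamma)=1$ and $\Gamma^N\cap\{F_\sigma>0\}=\emptyset$. For example, take $\pi=(\id\times T)_\#\mu$ with $\mu$ non-atomic, and suppose a violation set lies in $\{x_2=S(x_1)\}$ for a $\mu$-preserving map $S$. That set is $\pi^{\otimes 2}$-null, yet every $\Gamma$ of full $\pi$-measure contains a pair $((x,Tx),(Sx,TSx))$ from it. Villani's Theorem 5.10 does not argue this way. There are two standard ways to close the gap.
\begin{itemize}
\item[(i)] Use Kantorovich duality. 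It holds here because $-\ell^q$ is lower semicontinuous, takes values in $\R\cup\{+\infty\}$, and is bounded below by $-(1+a(x)+b(y))$. Choose $\varphi_k(x)+\psi_k(y)\ge\ell^q(x,y)$ with $\int\varphi_k\,d\mu+\int\psi_k\,d\nu\to\int\ell^q\,d\pi$. Pass to a subsequence along which $\varphi_k(x)+\psi_k(y)-\ell^q(x,y)\to0$ for $\pi$-a.e.\ $(x,y)$, and let $\Gamma$ be the set of such points in $X^2_\le$. Summing $\ell^q(x_i,y_{\sigma(i)})\le\varphi_k(x_i)+\psi_k(y_{\sigma(i)})$ and letting $k\to\infty$ gives cyclical monotonicity, with no continuity of $\ell$ needed.
\item[(ii)] Run your perturbation with \emph{every} measure on $(X^2)^N$ whose $N$ marginals are all $\le\pi$, not just with $\pi^{\otimes N}$. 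Then use Kellerer's duality lemma, as Beiglb\"ock--Goldstern--Maresch--Schachermayer do, to cover the violation sets by cylinders over $\pi$-null sets.
\end{itemize}

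You postponed the case $\ell_q(\mu,\nu)=-\infty$. With this paper's convention every coupling is then optimal, and the claim is false, so this case must be excluded. In Minkowski space take $x_1\le y_2$, $x_2\le y_1$, $x_1\not\le y_1$, $x_2\not\le y_2$, plus atoms $x_3$ of $\mu$ and $y_3$ of $\nu$ with $x_3\not\le y_j$ and $x_j\not\le y_3$ for all $j$. Then $\frac13\sum_i\delta_{(x_i,y_i)}$ is optimal, but $\ell^q(x_1,y_1)+\ell^q(x_2,y_2)=-\infty<\ell^q(x_1,y_2)+\ell^q(x_2,y_1)$.

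\emph{Monotone in $X^2_\ll$ $\Rightarrow$ optimal.} Your assertion that $\Gamma\subset X^2_\ll$ keeps the potentials finite is wrong. It only makes the diagonal terms $\ell^q(x_i,y_i)$ real. The cross terms $\ell^q(x_{i+1},y_i)$ and $\ell^q(x,y_n)$ in your chains may still be $-\infty$.

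A two-point example already shows this. Take $\mu=\frac12(\delta_{x_1}+\delta_{x_2})$ and $\nu=\frac12(\delta_{y_1}+\delta_{y_2})$ with $x_i\ll y_i$ but $x_1\not\le y_2$ and $x_2\not\le y_1$. The set $\Gamma=\{(x_1,y_1),(x_2,y_2)\}$ is cyclically monotone. Yet with base point $(x_1,y_1)$, every chain reaching $x_2$ contains the term $-\ell^q(x_2,y_1)=+\infty$, so $\varphi(x_2)=+\infty$ at an atom of mass $\frac12$ lying in $p_1(\Gamma)$. The dual inequality must be an equality at $(x_2,y_2)\in\Gamma$, so the companion potential is infinite at $y_2$ with the opposite sign. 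The integrated inequality is therefore of the form $\infty-\infty$. Separately, there is a sign slip: with your $\varphi$ the valid inequality is $\ell^q(x,y)\le-\varphi(x)+\psi(y)$, not $\varphi(x)+\psi(y)\ge\ell^q(x,y)$.

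This is not a technicality. For rewards taking the value $-\infty$, cyclical monotonicity does not in general imply optimality (Ambrosio--Pratelli's example), so you need an additional ingredient.
\begin{itemize}
\item If $(\mu,\nu)$ is totally timelike, $\ell^q$ is real-valued and bounded on the compact set $\spt\mu\times\spt\nu$, which carries every coupling. The real-valued case of \cite[Thm.~5.10]{Villani} then applies there.
\item If $\Gamma\subset X^2_\le$ is $\ell_+^q$-cyclically monotone, apply the same theorem to the real-valued reward $\ell_+^q$, and use that $\ell^q\le\ell_+^q$ with equality on $X^2_\le$.
\end{itemize}
For a general dualizable pair you need a further argument. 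Options are a decomposition of $\Gamma$ into pieces connected by finite cross terms, or a criterion of Beiglb\"ock--Goldstern--Maresch--Schachermayer type. The single-base-point Rockafellar construction does not provide it.
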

We now introduce the various notions of chronology available on $\Pcal(X)$. The weakest notion is ``$q$-timelike dualizability'', and the strongest is ``totally timelike dualizability". The term ``totally timelike'' is, to our knowledge, new in the literature. It was chosen at the suggestion of McCann. In \Cref{midpointTCDe} we approximate $q$-timelike pairs of measures with totally timelike ones, and so in fact all the notions in-between are immaterial for our results; nonetheless we include them for completeness.

\begin{defn}[Chronology for measures]\label{chronologynotions} Let $(\mu,\nu)\in\Pcal(X)^2$ be dualizable. Let $q\in (0,1)$. We say that the pair $(\mu,\nu)$ is... 
\begin{enumerate}
\item ... {\it $q$-timelike} if there exists a timelike $q$-optimal coupling $\pi\in \TOpt_q(\mu,\nu)$;
\item ...  {\it strictly $q$-timelike} if every $q$-optimal coupling is timelike; 
\item ... {\it strongly $q$-timelike} if every $q$-optimal coupling is concentrated on the same $\ell_q$-cyclically monotone set $\Gamma\subset X_\ll^2$;
\item ... {\it $q$-separated} if  every $q$-optimal coupling is concentrated on the same {\it compact} $\ell_q$-cyclically monotone set $\Gamma\subset X_\ll^2$; 
\item ... {\it totally timelike} if $\spt\mu\times \spt\nu$ is compactly contained in $X_\ll^2$. 
\end{enumerate}
\end{defn}

\begin{prop}[Stability of optimal couplings for totally timelike pairs]\label{Stableopt}
Let $(X,\ell,\tau, \m)$ be a measured topological causal space with $\ell_+$ continuous. Let $\{(\mu^n_0, \mu^n_1)\}$ be a sequence of totally timelike pairs of measures which are concentrated on the same compact set $K\subset X_\ll^2$.  Then $(\mu^n_0, \mu^n_1)$ has a narrow subsequential limit $(\mu_0, \mu_1)$, and any sequence $\pi^n\in \Opt_q(\mu_0^n, \mu_1^n)$ has a further narrow subsequential limit $\pi \in \TOpt_q(\mu_0, \mu_1)$. 
\end{prop}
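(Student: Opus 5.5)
The plan is to extract compactness from the fact that everything lives on a fixed compact set, pass to the limit in the couplings, and then prove optimality of the limit with a continuity argument for the cost. Write $K_0:=p_1(K)$ and $K_1:=p_2(K)$. These are compact subsets of $X$, and $\spt\mu_0^n\subset K_0$, $\spt\mu_1^n\subset K_1$ for all $n$.

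\textbf{Step 1: compactness.} Since every $\mu_i^n$ is a probability measure concentrated on the compact set $K_i$, the family $\{\mu_i^n\}_n$ is tight. By Prokhorov, after passing to a subsequence, $\mu_0^n\narrowto\mu_0$ and $\mu_1^n\narrowto\mu_1$. Closed sets behave well under Portmanteau, so the limits are again concentrated on $K_0$ and $K_1$. Every $\pi^n\in\Pi(\mu_0^n,\mu_1^n)$ is concentrated on $K_0\times K_1$, which is compact. Hence $\{\pi^n\}$ is tight, and a further subsequence satisfies $\pi^n\narrowto\pi$. The projections are continuous, so marginals pass to the limit and $\pi\in\Pi(\mu_0,\mu_1)$. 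Again by Portmanteau, $\pi(K_0\times K_1)\ge\limsup\pi^n(K_0\times K_1)=1$.

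Here I would be careful about what ``totally timelike pairs concentrated on the same compact $K\subset X_\ll^2$'' means. I read it as $\spt\mu_0^n\times\spt\mu_1^n\subset K$. In that case $\pi$ is concentrated on $K$, since $K$ is closed and contains $\spt\pi^n$, and therefore $\pi$ is timelike. If the hypothesis is only that the product of supports lies in $K_0\times K_1$, then I would replace $K$ by $K_0\times K_1$ and use that this set lies in $X^2_\ll$. Either way, $\pi$ is concentrated on a compact subset of $X^2_\ll$.

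\textbf{Step 2: optimality.} This is the main step. On $K$ we have $\ell=\ell_+>0$ and $\ell_+$ is continuous, so $\ell^q$ is continuous and bounded on the compact set $K$. Extend $\ell_+^q|_K$ to a bounded continuous function on $X^2$, for instance $\min\{\ell_+^q,\max_K\ell^q\}$. This extension agrees with $\ell^q$ on the supports of all the $\pi^n$ and of $\pi$. Narrow convergence then gives $\int\ell^q\,d\pi^n\to\int\ell^q\,d\pi$.

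To compare with an arbitrary competitor $\sigma\in\Pi(\mu_0,\mu_1)$, I would not try to approximate $\sigma$ directly. Instead I would use cyclical monotonicity, via \Cref{cyclicallymonotoneoptimal}. Each $\pi^n$ is optimal, so it is concentrated on an $\ell_q$-cyclically monotone set; since its support lies in $K$ and $\ell^q$ is continuous there, $\spt\pi^n$ itself is $\ell^q$-cyclically monotone. Take any $(x_i,y_i)\in\spt\pi$. Narrow convergence lets us choose $(x_i^n,y_i^n)\in\spt\pi^n$ converging to $(x_i,y_i)$. The cyclical monotonicity inequalities hold for the points $(x_i^n,y_i^n)$; the permuted pairs $(x_i,y_{\sigma(i)})$ lie in $K_0\times K_1\subset X^2_\ll$, where $\ell_+$ is continuous, so we can pass to the limit. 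This shows $\spt\pi$ is $\ell^q$-cyclically monotone and contained in $K\subset X^2_\ll$. By the converse direction of \Cref{cyclicallymonotoneoptimal}, $\pi\in\Opt_q(\mu_0,\mu_1)$, and combined with Step 1, $\pi\in\TOpt_q(\mu_0,\mu_1)$.

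\textbf{Remaining checks.} Dualizability of $(\mu_0,\mu_1)$ is needed to invoke \Cref{cyclicallymonotoneoptimal}. It holds because $\ell$ is bounded on $K_0\times K_1$: upper semicontinuity on a compact set gives a maximum, so constant functions $a,b$ work. The delicate point is the one flagged in Step 2. The permuted pairs must stay in the region where $\ell$ is continuous and positive, which is exactly why total timelikeness (supports in $K_0\times K_1\subset X^2_\ll$) is needed rather than just timelike couplings.
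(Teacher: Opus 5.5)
Your proposal is correct and follows the same route as the paper: tightness on the fixed compact set gives the narrow limits, cyclical monotonicity of the $\pi^n$ passes to the limit by continuity of $\ell_+$, and the converse direction of \Cref{cyclicallymonotoneoptimal} then gives $\pi\in\TOpt_q(\mu_0,\mu_1)$. One small correction: $p_1(K)\times p_2(K)$ need not lie in $X_\ll^2$, so your fallback reading of the hypothesis would not suffice, but your main argument does not need this, because the permuted pairs $(x_i^n,y_{\sigma(i)}^n)$ already lie in $\spt\mu_0^n\times\spt\mu_1^n\subset K$ and hence their limits lie in the closed set $K$ (alternatively, work with $\ell_+^q$-cyclical monotonicity throughout, as the paper does, since $\ell_+$ is continuous on all of $X^2$).
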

\begin{proof}
Compactness of $K$ implies tightness of $\{(\mu_0^n, \mu_1^n)\}_{n\in \N}$,  and so the set admits some subsequential limit $(\mu_0, \mu_1)$ which is also concentrated on $K$. Moreover tightness implies that any sequence $\{\pi^n\}_{n\in\N}$ such that $\pi^n\in \Opt_q(\mu_0^n, \mu_1^n)$ has a further subsequential limit $\pi\in \Pi(\mu_0,\mu_1)$. As each pair $(\mu_0^n, \mu_1^n)$ is totally timelike, each $\pi^n$ is concentrated on an $\ell_+^q$-cyclically monotone set contained in $X_\ll^2$, and since $\ell_+$ is continuous, this property is inherited in the (subsequential) limit. Therefore $\pi$ is concentrated on a $\ell_+^q$-cyclically monotone set and concentrated on $K\subset X_\ll^2$, and therefore by \Cref{cyclicallymonotoneoptimal} is $q$-optimal. 
\end{proof}

\begin{prop}[Restriction preserves optimality]
Suppose $\pi\in\Pcal(X^2)$ is timelike $q$-optimal between its marginals. For each $f\in L^1(\pi)$ with $ \|f\|_{L^1(\pi)}=1$, the measure $\pi_f$ defined by $d\pi_f:=fd\pi$ is also timelike $q$-optimal between its marginals. 
\end{prop}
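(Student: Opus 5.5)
Implicitly $f\ge 0$, since otherwise $\pi_f$ is not a probability measure. The plan is to pass through cyclical monotonicity via \Cref{cyclicallymonotoneoptimal}, in both directions.

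\emph{Step 1.} Since $\pi$ is $q$-optimal between its marginals $\mu:=(p_1)_\#\pi$ and $\nu:=(p_2)_\#\pi$, the first half of \Cref{cyclicallymonotoneoptimal} gives an $\ell_q$-cyclically monotone set $\Gamma$ on which $\pi$ is concentrated. Since $\pi$ is also timelike, it is concentrated on $X^2_\ll$. Replacing $\Gamma$ by $\Gamma':=\Gamma\cap X^2_\ll$ keeps it cyclically monotone, because subsets of cyclically monotone sets are trivially cyclically monotone. Then $\pi(X^2\setminus\Gamma')=0$.

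\emph{Step 2.} Since $d\pi_f=f\,d\pi$ with $f\geq 0$ and $\int f\,d\pi=1$, we have $\pi_f\ll\pi$, and $\pi_f$ is a probability measure. Hence $\pi_f(X^2\setminus\Gamma')=0$, so $\pi_f$ is concentrated on the $\ell_q$-cyclically monotone set $\Gamma'\subset X^2_\ll$. In particular $\pi_f$ is timelike. Its marginals are $\mu_f:=(p_1)_\#\pi_f\leq \|f\|_\infty\mu$ when $f$ is bounded; in general we only have $\mu_f\ll\mu$, and similarly $\nu_f\ll\nu$. Trivially $\pi_f\in\Pi(\mu_f,\nu_f)$.

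\emph{Step 3.} To apply the converse half of \Cref{cyclicallymonotoneoptimal}, the pair $(\mu_f,\nu_f)$ must be dualizable. This is the main obstacle, because dualizability of $(\mu,\nu)$ provides lower semicontinuous $a\in L^1(\mu)$ and $b\in L^1(\nu)$ with $\ell\leq a\oplus b$, but $a$ need not lie in $L^1(\mu_f)$ for unbounded $f$. I would handle it as follows.

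\emph{(i)} If $f$ is bounded, then $\int|a|\,d\mu_f\leq\|f\|_\infty\int|a|\,d\mu<\infty$, and likewise for $b$. So $(\mu_f,\nu_f)$ is dualizable, and \Cref{cyclicallymonotoneoptimal} gives $\pi_f\in\TOpt_q(\mu_f,\nu_f)$.

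\emph{(ii)} For general $f$, I would avoid the dual potentials and argue directly by contradiction. Suppose some $\sigma\in\Pi(\mu_f,\nu_f)$ satisfies $\int\ell^q\,d\sigma>\int\ell^q\,d\pi_f$. Decompose $\pi=\pi_f\cdot c+\rho$ with $\rho\geq0$ after truncating: for $c>0$, write $\pi\geq c\,\pi_{f\wedge M}$-type pieces and replace the piece $c\,\pi_{f_M}$ by the corresponding competitor $c\,\sigma_M$. This produces a coupling of $\mu,\nu$ with strictly larger $\ell^q$-integral, which contradicts optimality of $\pi$. The limit $M\to\infty$ is controlled by monotone convergence, using that $\ell^q_+$ is $\pi$-integrable, since $\pi$ is optimal with finite value.

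In the setting where this is used (compactly supported measures on a globally hyperbolic space with $\ell_+$ continuous), $\ell$ is bounded on the relevant supports, so dualizability holds automatically with constant potentials. I expect to state that case cleanly and treat (ii) as the general remark.
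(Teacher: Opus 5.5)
Your Steps 1--2 are the paper's proof. $\pi$ is concentrated on an $\ell^q$-cyclically monotone set inside $X^2_\ll$, $\pi_f\ll\pi$ inherits this concentration, and the converse half of \Cref{cyclicallymonotoneoptimal} is applied. The paper stops there and never checks that the new marginals $(\mu_f,\nu_f)$ are dualizable. You are right that this hypothesis of the converse half must be verified, and also that $f\ge 0$ must be assumed.

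Your case (i) and your closing remark on compact supports are correct. They cover the places where the paper uses restriction, which are normalized restrictions of compactly supported couplings, as in \Cref{midpointTCDe} and \Cref{singularmidpoint}. In the bounded case you do not need the converse of cyclical monotonicity at all. Set $c:=1/\|f\|_\infty$, so that $\pi-c\,\pi_f\ge 0$. Then any better competitor $\sigma\in\Pi(\mu_f,\nu_f)$ would make $\pi-c\,\pi_f+c\,\sigma\in\Pi(\mu,\nu)$ beat $\pi$. This only needs $\int\ell^q\,d\pi$ to be finite.

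Your sketch (ii) for unbounded $f$ has a genuine gap. The ``corresponding competitor $\sigma_M$'' for the truncated piece $(f\wedge M)\pi$ is never constructed, and in general it cannot be extracted from $\sigma$. A better coupling $\sigma$ of $(\mu_f,\nu_f)$ need not dominate any sub-coupling with the prescribed smaller marginals $(p_1)_\#((f\wedge M)\pi)$ and $(p_2)_\#((f\wedge M)\pi)$. For example, if $\sigma=(\id\times T)_\#\mu_f$, then every $\sigma'\le\sigma$ has second marginal equal to $T_\#$ of its first marginal. Since $\sigma$ is unrelated to $\pi$, that need not match the second marginal you need.

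Monotone convergence in $M$ does not close this gap. The difficulty is producing a strict improvement over $\pi$ at some finite level $M$, and nothing in the sketch does that. So your argument, like the paper's, proves the statement only when $(\mu_f,\nu_f)$ is dualizable, for instance for bounded $f$ or compactly supported $\pi$. You should restrict the statement to that case and drop (ii).
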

\begin{proof}
Evidently the measure $\pi_f$ is concentrated on the same set as $\pi$. As $\pi$ is timelike and $q$-optimal, by \Cref{cyclicallymonotoneoptimal} is concentrated on an $\ell_q$-cyclically monotone set $\Gamma\subset X_\ll^2$, and therefore so is $\pi_f$. By \Cref{cyclicallymonotoneoptimal}, $\pi_f$ is $q$-optimal between its marginals. 
\end{proof}

\subsubsection{Optimal Plans and Geodesics}

Given $t\in [0,1]$, we define the {\it $t$-evaluation map} $e_t: C([0,1],X)\to X$ given by  $e_t(\gamma):=\gamma_t$. We also define the space $\Geo(X)\subset C([0,1],X)$ to be the set of geodesics parametrized in $[0,1]$. 

\begin{defn}[Optimal plans and displacement geodesics]
A measure $\eta\in \Pcal(\Geo(X))$ is a {\it $q$-optimal plan} between $(\mu,\nu)\in\Pcal(X)^2$ if $(e_0\times e_1)_\# \eta\in \Opt_q(\mu,\nu)$. The set of $q$-optimal plans from $\mu$ to $\nu$ is denoted $\OptGeo_q(\mu,\nu)$.  A plan $\eta$ is {\it timelike} if it is concentrated on $X_\ll^2$, and the set of timelike $q$-optimal plans is denoted $\TOptGeo_q(\mu,\nu)$. Note that if $\eta$ is $q$-optimal plan then $\mu_t:= (e_t)_\#\eta$ is a geodesic. Such a geodesic is called a {\it displacement geodesic}. 
\end{defn}
{\bf Remark:} (On definitions) What we call an ``optimal coupling'', other authors call an ``optimal plan'', and what we call an 		``optimal plan'', they call an {\it ``optimal dynamical plan}''. 

\begin{prop}[Existence of optimal plans and displacement geodesics, \text{\cite[Thm.\ 2.43]{Octet}}]\label{optimalplanprop}
Let $(X,\ell, \tau)$ be a compact synthetic spacetime. Then every $q$-optimal coupling is induced by a $q$-optimal plan, and every geodesic on $\Pcal(X)$ is a displacement geodesic. 
\end{prop}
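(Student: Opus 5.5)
The plan is to prove the two assertions of \Cref{optimalplanprop} in order: first lift every $q$-optimal coupling to a $q$-optimal plan, then show that every geodesic in $(\Pcal(X),\ell_q)$ is a displacement geodesic. The tools are a measurable selection of geodesics, gluing of couplings, and a compactness argument in $\Pcal(C([0,1],X))$.

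\textbf{Lifting couplings to plans.} Let $\pi\in\Opt_q(\mu,\nu)$. Since $\ell$ is upper semicontinuous and $X$ is compact, $\ell$ is bounded above, so every pair is dualizable and $\pi$ is concentrated on $X^2_\leq$. Indeed, if $\pi(\{\ell=-\infty\})>0$ then $\int \ell^q\,d\pi=-\infty$, whereas the optimal value is at least this whenever it is finite. Consider the set
\[G:=\{(x,y,\gamma)\in X^2\times\Geo(X):\gamma_0=x,\ \gamma_1=y\}.\]
It is closed: if $\gamma^n\to\gamma$ uniformly with $\ell(\gamma^n_s,\gamma^n_t)=(t-s)\ell(\gamma^n_0,\gamma^n_1)$, then upper semicontinuity gives one inequality, and the reverse triangle inequality applied through $\gamma_0\le\gamma_s\le\gamma_t\le\gamma_1$ gives the other, since age cannot exceed $\ell$. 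By condition 3 of \Cref{spacetimedef}, the projection of $G$ onto $X^2_\leq$ is surjective. Because $C([0,1],X)$ is Polish, the Kuratowski--Ryll-Nardzewski (or von Neumann) measurable selection theorem yields a universally measurable map $S:X^2_\leq\to\Geo(X)$ with $S(x,y)_0=x$ and $S(x,y)_1=y$. Setting $\eta:=S_\#\pi$ gives $(e_0\times e_1)_\#\eta=\pi$, so $\eta\in\OptGeo_q(\mu,\nu)$.

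\textbf{Geodesics are displacement geodesics.} Let $(\mu_t)$ be an $\ell_q$-geodesic, so $\ell_q(\mu_s,\mu_t)=(t-s)\ell_q(\mu_0,\mu_1)$. The steps are as follows.
\begin{enumerate}
\item Fix the dyadic times $t_i=i2^{-k}$. Choose $\pi_i\in\Opt_q(\mu_{t_{i-1}},\mu_{t_i})$ and glue them, via the same disintegration used in the triangle inequality lemma, into $\bar\pi^k\in\Pcal(X^{2^k+1})$.
\item The reverse Minkowski inequality, used in the proof of the triangle inequality, must then hold with equality. Combined with $\ell_q(\mu_0,\mu_1)=\sum_i\ell_q(\mu_{t_{i-1}},\mu_{t_i})$, this forces the following for $\bar\pi^k$-a.e.\ tuple $(x_0,\dots,x_{2^k})$: the chain is causal, it satisfies $\ell(x_0,x_{2^k})=\sum_i\ell(x_{i-1},x_i)$, and the increments $\ell(x_{i-1},x_i)$ are proportional to $t_i-t_{i-1}$. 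The last point uses the equality case of reverse Minkowski for $q<1$, which requires the functions to be a.e.\ proportional.
\item Interpolate each consecutive pair by the selected geodesics $S$ and concatenate. This produces $\eta^k\in\Pcal(\Geo(X))$ with $(e_{t_i})_\#\eta^k=\mu_{t_i}$ for all dyadic $t_i$ of level $k$; the concatenated curve is a geodesic because of the additivity in step 2.
\item By compactness of $X$ and global hyperbolicity, $\Geo(X)$ is compact. Here one would invoke a uniform equicontinuity, an Arzel\`a--Ascoli type statement for geodesics in compact synthetic spacetimes, as in \cite{Octet}. Hence $\{\eta^k\}$ is tight and has a narrow limit $\eta$.
\item Closedness of $\Geo(X)$ gives $\eta\in\Pcal(\Geo(X))$. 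Continuity of $e_t$ gives $(e_t)_\#\eta=\mu_t$ for all dyadic $t$, and then for all $t$ by continuity of $t\mapsto\mu_t$. Upper semicontinuity of $\ell$ shows $(e_0\times e_1)_\#\eta$ attains $\ell_q(\mu_0,\mu_1)$, so it is optimal.
\end{enumerate}

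\textbf{Main obstacle.} I expect the hardest step to be the compactness of $\Geo(X)$, that is, equicontinuity of geodesics, together with continuity of $t\mapsto\mu_t$ in the narrow topology. Neither is immediate from upper semicontinuity of $\ell$ alone. For the first, I would argue by contradiction, mimicking the proof of \Cref{geodesicpathtocurve}: a failure of equicontinuity yields a limit point $p$ off the curve with $\ell(p,\gamma_s)=0$ but lying on a maximizing chain. For the second, I would use the measure-level analogue of \Cref{geodesicpathtocurve} in the compact space $\Pcal(X)$. A secondary delicate point is null ($\ell=0$) pieces, where the equality case of reverse Minkowski gives no proportionality; there the selected null geodesics are concatenated directly, and the geodesic property still holds trivially.
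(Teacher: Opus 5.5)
\textbf{The gap is Step 4, and it cannot be patched as stated.} In the paper's convention a null geodesic only has to satisfy $\ell(\gamma_s,\gamma_t)=0$ for $s\le t$. That condition survives every continuous increasing reparametrization, so $\Geo(X)$ is not equicontinuous, even when $X$ is a compact causal diamond in two-dimensional Minkowski space. Hence no Arzel\`a--Ascoli argument gives tightness of $\{\eta^k\}$.

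In fact the second assertion is false for such geodesics. Take $a\neq b$ with $a\le b$, $\ell(a,b)=0$, and set $\mu_t:=(1-t)\delta_a+t\delta_b$. The coupling $(1-t)\delta_{(a,a)}+(t-s)\delta_{(a,b)}+s\delta_{(b,b)}$ shows $\ell_q(\mu_s,\mu_t)=0=(t-s)\ell_q(\mu_0,\mu_1)$ for $s\le t$. Since $t\mapsto\mu_t$ is narrowly continuous, this is a geodesic of $\Pcal(X)$. But if $(e_t)_\#\eta=\mu_t$ for all $t$, then $\eta$-a.e.\ curve takes values in $\{a,b\}$ at all rational times, hence at all times. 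Such a curve is constant, which forces $\mu_1=\mu_0$.

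Your Steps 1--3 applied to this curve produce plans whose curves jump from $a$ to $b$ within a window of length $2^{-k}$. That is exactly the loss of compactness, and it sits in the ``null pieces'' you set aside as trivial. Placing such a null piece disjoint from, and causally unrelated to, a timelike displacement geodesic gives a counterexample with $\ell_q(\mu_0,\mu_1)>0$ as well. The first assertion likewise needs $\ell_q(\mu,\nu)\ge0$; otherwise every coupling, including non-causal ones, is optimal. The paper gives no proof here: it quotes \cite{Octet} and only ever uses timelike plans, so the version worth proving is a timelike one.

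\textbf{What your route can prove.} Restrict to pairs for which every coupling built in Step 2 has endpoints in a fixed compact subset of $X^2_\ll$ (e.g.\ totally timelike pairs). Then every curve from Step 3 satisfies $\ell(\gamma_0,\gamma_1)\ge c>0$. Equicontinuity of that family still needs more than upper semicontinuity. Assume $\ell_+$ is continuous and the space is regular (item 2 of \Cref{geodesicpathtocurve}). A failure of equicontinuity then produces limit points $x,p,p',y$ with $p\neq p'$, $\ell(p,p')=0$ and $\ell(x,p)+\ell(p,p')+\ell(p',y)=\ell(x,y)\ge c$. This gives a non-timelike maximizing curve between timelike related points, contradicting regularity. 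Passing these identities to the limit uses continuity of $\ell_+$.

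The same continuity repairs your closedness claim for $G$, which is false under mere upper semicontinuity. From usc you only get $\ell(\gamma_s,\gamma_t)\ge(t-s)\limsup_n\ell(\gamma^n_0,\gamma^n_1)$, while $\ell(\gamma_0,\gamma_1)$ may jump up in the limit. For the selection in Step 1, Borel measurability of $\Geo(X)$ suffices anyway: impose the geodesic identity at rational times, and use upper semicontinuity plus the reverse triangle inequality to extend it to all times.
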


\begin{defn}[Timelike ($q$-essential) non-branching]\label{nonbranchingdef}
A set $A\subset X$ is {\it timelike non-branching} if for each $0\leq s\leq t\leq 1$, the restriction map $r_s^t: C([0,1],A)\to C([0,1],A)$ given by $(r_s^t(\gamma))_u=\gamma_{(1-u)s+ut}$ is injective on the set of timelike curves in $A$. We say that $X$ is {\it timelike $q$-essentially non-branching} if every timelike optimal plan $\eta\in \TOptGeo_q(\mu_0,\mu_1)$ is concentrated on a timelike non-branching set, for all $\mu_0,\mu_1\in\Pcal(X)$.
\end{defn}

The non-branching assumption prevents optimal plans from ``meeting'' in a sense outlined by the following lemma. This results mirrors now classic results in the metric measure setting; \cite[Lem.\ 2.6]{BacherSturm}, and \cite[Lem.\ 2.11]{SturmII}. In the Lorentzian setting we point out \cite[Lem.\ 2.23]{Braun}.
\begin{defn}[Midpoint]
Fix $q\in (0,1)$. Let $\mu_0,\mu_1\in (\Pcal(X), \ell_q)$, and let $t\in [0,1]$. A measure $\nu\in(\Pcal(X), \ell_q)$ is called a $t$-midpoint between $\mu_0$ and $\mu_1$ if 
\[\ell_q(\mu_0, \nu)=t\ell_q(\mu_0, \mu_1); \quad \ell_q(\nu,\mu_1)=(1-t)\ell_q(\mu_0, \mu_1).\]
\end{defn}

\begin{lemma}[Midpoints between mutually singular measures remain mutually singular]\label{singularmidpoint}
Suppose $(X,\ell, \tau, \m)$ is a timelike $q$-essentially non-branching globally hyperbolic synthetic spacetime with $\ell_+$ continuous. Fix $t\in [0,1)$. Let $\{(\mu_{0k}, \mu_{1k})\}_{k=1}^n$ be a finite collection of $q$-timelike pairs of measures in $\Pcal_c(X)^2$, and let $\mu_{tk}$ be a $t$-midpoint joining $\mu_{0k}$ to $\mu_{1k}$. For $i=0,t,1$, let $\mu_{i}:=\sum_{k=1}^n \lambda_k\mu_{ik}\in \Pcal(X)$. Suppose $(\mu_0, \mu_1)$ is a $q$-timelike pair, and $\mu_t$ is a $t$-midpoint between $\mu_0$ and $\mu_1$. If  $\mu_{0k}\perp \mu_{0k'}$ for all $k\neq k'$, then $\mu_{t k}\perp \mu_{t k'}$ for all $k\neq k'$. 
\end{lemma}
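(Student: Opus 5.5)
The plan is to glue the componentwise geodesic structures into one $q$-optimal plan from $\mu_0$ to $\mu_1$ and then use timelike $q$-essential non-branching to rule out mass meeting at time $t$. By global hyperbolicity and compactness of supports, everything lives in a compact emerald $J(C,C')$ that contains all supports (and all midpoints, since midpoints of causally related compactly supported measures are supported in the emerald). We may therefore apply \Cref{optimalplanprop} there. Each $t$-midpoint $\mu_{tk}$ gives $q$-optimal couplings $\pi^{0t}_k\in\Opt_q(\mu_{0k},\mu_{tk})$ and $\pi^{t1}_k\in\Opt_q(\mu_{tk},\mu_{1k})$, which are realized by optimal plans $\eta^{0t}_k,\eta^{t1}_k$. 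Gluing these at time $t$ by disintegration (as in the proof of the reverse triangle inequality for $\ell_q$) and reparametrizing gives a plan $\eta_k$ on curves through $\mu_{0k},\mu_{tk},\mu_{1k}$. Since $\ell_q(\mu_{0k},\mu_{tk})+\ell_q(\mu_{tk},\mu_{1k})=\ell_q(\mu_{0k},\mu_{1k})$, equality holds in the reverse Minkowski inequality and in the reverse triangle inequality. This forces $\eta_k$-a.e.\ concatenation to be a geodesic with $\ell(\gamma_0,\gamma_t)=t\ell(\gamma_0,\gamma_1)$, and the glued coupling $(e_0,e_1)_\#\eta_k$ to lie in $\Opt_q(\mu_{0k},\mu_{1k})$.

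Next set $\eta:=\sum_k\lambda_k\eta_k$, so that $(e_i)_\#\eta=\mu_i$ for $i=0,t,1$. The key claim is that $\eta$ is a $q$-optimal plan between $\mu_0$ and $\mu_1$. The reason is that $\mu_t$ is a $t$-midpoint of $(\mu_0,\mu_1)$, and by Minkowski/concavity
\[\ell_q(\mu_0,\mu_1)^q\ \ge\ \sum_k\lambda_k\,\ell_q(\mu_{0k},\mu_{1k})^q .\]
Equality should follow by splitting optimal couplings of $(\mu_0,\mu_t)$ and $(\mu_t,\mu_1)$ along the decomposition of $\mu_t$. This step is the \textbf{main obstacle}: the decomposition $\mu_t=\sum\lambda_k\mu_{tk}$ is given to us, but a priori an optimal coupling for $(\mu_0,\mu_t)$ need not respect it. I would handle it via the midpoint identities, using the fact that $\ell_q^q$ is jointly concave/superadditive under mixtures ($\ell_q(\sum\lambda_k\alpha_k,\sum\lambda_k\beta_k)^q\ge\sum\lambda_k\ell_q(\alpha_k,\beta_k)^q$). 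Combined with the reverse triangle inequality through $\mu_t$ and the strict concavity of $s\mapsto s^q$, this forces the mixed plan $\eta$ to be optimal and its coupling to be timelike. In particular $(e_0,e_1)_\#\eta\in\TOpt_q(\mu_0,\mu_1)$, concentrated on $X^2_\ll$ because each $(\mu_{0k},\mu_{1k})$ is $q$-timelike and the glued couplings are timelike.

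With $\eta\in\TOptGeo_q(\mu_0,\mu_1)$ established, timelike $q$-essential non-branching gives a timelike non-branching set $G$ on which $\eta$ is concentrated. Suppose, for contradiction, that $\mu_{tk}\not\perp\mu_{tk'}$ for some $k\neq k'$. Then there is a Borel set $B$ with $\mu_{tk}$ and $\mu_{tk'}$ both charging every positive-measure subset, so there exist $\gamma\in\spt\eta_k\cap G$ and $\sigma\in\spt\eta_{k'}\cap G$ with $\gamma_t=\sigma_t$. Since $\mu_{0k}\perp\mu_{0k'}$, we can arrange $\gamma_0\neq\sigma_0$.

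To pass from ``overlapping marginals'' to an actual pair of branching curves, I would use a standard mixing trick. By cyclical monotonicity of the support of the optimal coupling (\Cref{cyclicallymonotoneoptimal}), the curve $\gamma\vert_{[0,t]}\oplus\sigma\vert_{[t,1]}$ is again a geodesic between points related in an optimal way. This needs a small $\epsilon$-perturbation argument and continuity of $\ell_+$. Replacing a small portion of $\eta$ by these swapped curves yields another timelike optimal plan, which by hypothesis must also be concentrated on a non-branching set. That plan contains $\sigma$ together with a curve agreeing with $\sigma$ on $[t,1]$ but differing at time $0$, so the restriction $r^1_t$ fails to be injective, which is a contradiction. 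Hence $\mu_{tk}\perp\mu_{tk'}$.
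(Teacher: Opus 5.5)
You follow the paper's outline: glue the component plans $\eta_k$, set $\eta=\sum_k\lambda_k\eta_k$, then contradict timelike $q$-essential non-branching by re-gluing at time $t$. However, the step you flag as the main obstacle is a genuine gap, and your proposed mechanism cannot close it. Superadditivity of $\ell_q^q$ under mixtures, the reverse triangle inequality through $\mu_t$, and the midpoint identities all bound $\ell_q(\mu_0,\mu_1)$ from \emph{below}. For instance, $t^q\ell_q^q(\mu_0,\mu_1)=\ell_q^q(\mu_0,\mu_t)\geq t^q\sum_k\lambda_k\ell_q^q(\mu_{0k},\mu_{1k})$ merely repeats $\ell_q^q(\mu_0,\mu_1)\geq\sum_k\lambda_k\ell_q^q(\mu_{0k},\mu_{1k})$. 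Strict concavity of $s\mapsto s^q$ only enters through the reverse Minkowski inequality for the glued plan $\eta$, where equality holds trivially.

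What you need is $\ell_q^q(\mu_0,\mu_1)\leq\sum_k\lambda_k\ell_q^q(\mu_{0k},\mu_{1k})$, and it does not follow from the hypotheses. Take the flat cylinder $\R\times S^1$, $S^1=\R/2\pi\mathbb{Z}$, with $\ell((s,x),(s',y))=\sqrt{(s'-s)^2-d_{S^1}(x,y)^2}$ and $T>\pi$. Let $Q_k=[(k-1)\pi/2,k\pi/2)$, $\lambda_k=1/4$, and let $\mu_{0k}$, $\mu_{1k}$ be normalized arclength on $\{0\}\times Q_k$ and $\{T\}\times(Q_k+\pi/2)$. Each pair lives in a half-circle, where the problem is one-dimensional with strictly convex cost. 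So the shift by $\pi/2$ is optimal, and normalized arclength on $\{tT\}\times(Q_k+t\pi/2)$ is a $t$-midpoint $\mu_{tk}$. These arcs tile $S^1$, so $\mu_t$ is uniform on $\{tT\}\times S^1$, which is a $t$-midpoint of the uniform measures $\mu_0,\mu_1$ via the vertical coupling. Every hypothesis holds, yet $\sum_k\lambda_k\ell_q^q(\mu_{0k},\mu_{1k})=(T^2-\pi^2/4)^{q/2}<T^q=\ell_q^q(\mu_0,\mu_1)$, so $\eta$ is not optimal. The lemma's conclusion happens to survive in this example, but your intermediate claim does not.

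The paper's proof makes the same unjustified leap (``by the same proof \dots\ the timelike $q$-optimal plan $\eta$''). The natural repair is to assume $\ell_q^q(\mu_0,\mu_1)=\sum_k\lambda_k\ell_q^q(\mu_{0k},\mu_{1k})$. This makes the midpoint property of $\mu_t$ automatic, and gives optimality of $\eta$ in one line since $\int\ell^q(\gamma_0,\gamma_1)\,d\eta=\sum_k\lambda_k\ell_q^q(\mu_{0k},\mu_{1k})$. It is also exactly what is available where the lemma is used: restrictions of an optimal $\pi$ in \Cref{midpointTCDe}, and stability of couplings in \Cref{sTBMtoTCDe}. In both places the component pairs are totally timelike, so $\eta$ is also timelike.

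Your closing step must also be carried out at the level of measures. Essential non-branching only asserts that optimal plans are concentrated on non-branching sets. Producing single curves $\gamma,\sigma$ with $\gamma_t=\sigma_t$ and swapping them changes nothing of positive mass. ``Replacing a small portion of $\eta$'' only makes sense once a positive-mass family is swapped coherently.

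The paper does this with $\zeta=\frac12(\eta_k+\eta_{k'})$. Disintegrate $(r_0^t)_\#\zeta$ and $(r_t^1)_\#\zeta$ along $e_t$ with respect to $\nu_t=\frac12(\mu_{tk}+\mu_{tk'})$, and glue fibrewise to get $\zeta^M=\int\operatorname{glu}_\#(\zeta^-_x\otimes\zeta^+_x)\,d\nu_t(x)$. The equality case of cyclical monotonicity, the reverse triangle inequality and concavity of $s\mapsto s^q$, as in \eqref{constantage}, show exactly that the glued curves are geodesics whose age depends only on $x$. No $\epsilon$-perturbation is needed, and hence $\zeta^M$ is optimal. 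Now consider a set of positive $\nu_t$-measure on which both $\mu_{tk}$ and $\mu_{tk'}$ have positive density with respect to $\nu_t$. There the fibre $\zeta^-_x$ splits into two nonzero mutually singular parts (because $\mu_{0k}\perp\mu_{0k'}$), both paired with the same $\zeta^+_x$. This forces every set carrying $\zeta^M$ to contain two distinct curves with the same restriction to $[t,1]$.
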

\begin{proof}

\newcommand{\glu}{\operatorname{glu}}
Fix $t\in (0,1)$. 
Let $Y:=J(\spt\mu_0,\spt\mu_1)$. From \Cref{spacetimestructure}, $\Pcal(Y)$ is a compact synthetic spacetime, and WLOG we may restrict ourselves to $\Pcal(Y)$ as every relevant measure lives in this space. We have that $\mu_t:= \sum_{k=1}^n \lambda_k \mu_{t,k}$ is a $t$-midpoint between $\mu_0$ and $\mu_1$. As $\Pcal(Y)$ is causally geodesic, for each $k=1,\dots, n$, there are geodesics $s\mapsto \mu^-_{s,k}$ and $s\mapsto \mu^+_{s,k}$ which join $\mu_{0,k}$ to $\mu_{t,k}$, and $\mu_{t,k}$ to $\mu_{1,k}$, respectively. Define $s\mapsto \mu_{s,k}$ via 
\[\mu_{s,k}:= \begin{cases}
\mu_{s/t,k}^- & 0\leq s\leq t;\\
\mu_{(s-t)/(1-t),k}^+ &t\leq s\leq 1.
\end{cases}\]

It's a straightforward computation to show that $\mu_{t,k}$ is a geodesic path joining $\mu_{0,k}$ to $\mu_{1,k}$, which from \Cref{geodesicpathtocurve} and \Cref{spacetimestructure} implies that $\mu_{t,k}$ is a geodesic. Then \Cref{optimalplanprop} implies that $\mu_{t,k}$ is induced by a timelike $q$-optimal plan $\eta_{k}$. 

Since $\mu_t:= \sum_{k=1}^n \lambda_k\mu_{t,k}$ is also a $t$-midpoint, by the same proof we have a geodesic $s\mapsto \mu_s$ joining $\mu_0$ to $\mu_1$ which meets $\mu_t$ at $s=t$, such that $\mu_s$ is induced by the timelike $q$-optimal plan $\eta:=\sum_{k=1}^n \lambda_k \eta_k$. 

Now suppose there is $k_1\neq k_2$ such that $\mu_{t,k_1}\not\perp \mu_{t,k_2}$. For $i=0,t,1$, we define the measures
\[\nu_i:= \frac{\mu_{i,k_1}+\mu_{i,k_2}}{2},\]
with associated timelike $q$-optimal plan 
\[\zeta:= \frac{\eta_{k_1}+\eta_{k_2}}{2}.\] 
Recall the restriction map $r_s^t:\Geo(Y)\to \Geo(Y)$ (see \Cref{nonbranchingdef}), and define 
\begin{align*}
\zeta^-&:= (r_0^t)_\# \zeta,\\
\zeta^+&:= (r_t^1)_\#\zeta.
\end{align*}

By disintegration with respect to the evaluation maps $e_0,e_1: \Geo(Y)\to Y$, we obtain the families $\{\zeta^{\pm}_x\}_{x\in Y}$ of measures defined via 
\[d\zeta^{\pm}(\gamma) = d\zeta^\pm_x(\gamma)d\nu_t(x).\]

We define the gluing map $\glu:Glu\to C([0,1],Y)$ on the set $Glu:=\{(\gamma,\sigma)\in \Geo(Y)^2: \gamma_1=\sigma_0\}$. The gluing map is continuous, so for $\nu_t$-a.e $x$, the measure
\[\zeta^M_x := \glu_\# (\zeta^-_x\otimes \zeta^+_x),\]
is well defined. Therefore we may define the mixed plan $\zeta^M\in \Pcal(C([0,1],X))$ via 
\[d\zeta^M(\gamma)= \zeta^M_x(\gamma)d\nu_t(x).\]

We claim that $\zeta^M$ is a $q$-optimal plan between $\nu_0$ and $\nu_1$, and yet cannot be concentrated on a timelike non-branching set. The argument is based on Steps 2 and 3 of the proof of \cite[Lem.\ 2.23; pp 68-70]{Braun}, which itself is more or less identical to \cite[\S 4]{RajalaSturm}. Within those proofs is the assumption that the t-midpoints $\nu_t$ are $\m$-absolutely continuous, which we remove. Apart from this, we also struggle to understand the finer details of the proofs in the above references, and so we attempt to provide our own below. 
\paragraph{Optimality of $\zeta^M$:}

We must show that $\zeta^M\in \Pcal(\Geo(X))$, and 
\[\ell_q^q(\nu_0,\nu_1)=\int \ell^q(\gamma_0, \gamma_1)d\zeta^M(\gamma). \]

As $\zeta$ is a timelike $q$-optimal plan, there is an $\ell^q$-cyclically monotone set $\Gamma\subset Y_\ll^2$ such that $\zeta$ is concentrated on $\hat\Gamma:= (e_0\times e_1)^{-1}(\Gamma)\subset \Geo(Y)$. Let $\gamma,\sigma\in \hat\Gamma$ be such that $\gamma_\lambda=\sigma_\lambda$. Then
\begin{align}\label{constantage}
\begin{split}
\ell^q(\gamma_0, \gamma_1)+\ell^q(\sigma_0, \sigma_1) &\geq \ell^q(\gamma_0, \sigma_1)+\ell^q(\sigma_0, \gamma_1) \geq \left(\ell(\gamma_0, \gamma_\lambda) +\ell(\sigma_\lambda, \sigma_1)\right)^q+ \left(\ell(\sigma_0, \sigma_\lambda) +\ell(\gamma_\lambda, \gamma_1)\right)^q \\
&= \left(\lambda\ell(\gamma_0, \gamma_1) +(1-\lambda)\ell(\sigma_0, \sigma_1)\right)^q+ \left(\lambda\ell(\sigma_0, \sigma_1) +(1-\lambda)\ell(\gamma_0, \gamma_1)\right)^q\\
&\geq \lambda\ell^q(\gamma_0, \gamma_1)+(1-\lambda)\ell^q(\sigma_0, \sigma_1)+ \lambda\ell^q(\sigma_0, \sigma_1)+(1-\lambda)\ell^q(\gamma_0, \gamma_1)\\
&= \ell^q(\gamma_0, \gamma_1)+\ell^q(\sigma_0, \sigma_1),
\end{split}
\end{align}
which gives equality in all above inequalities. This implies that $\ell(\gamma_0, \gamma_1)=\ell(\sigma_0,\sigma_1)=\ell(\gamma_0,\sigma_1)=\ell(\sigma_0,\gamma_1)$.

Observe that for $\nu_t$-a.e.\ $x\in Y$, $\zeta^-_x$  (resp. $\zeta^+_x$) is concentrated on $r_0^t(\hat\Gamma)\cap e_1^{-1}(x)$ (resp. $r_t^1(\hat\Gamma)\cap e_0^{-1}(x)$). Therefore for $\nu_t$-a.e.\ $x$, $\zeta^M_x$ is concentrated on the set $S(x):=\{\gamma\in C([0,1], Y): r_0^t\gamma \in r_0^t (\hat \Gamma), r_1^t\gamma \in r_1^t(\hat\Gamma), \gamma_t=x\}$. The claim is that each $\gamma\in S(x)$ is a geodesic whose age $\age(\gamma)= \ell(\gamma_0,\gamma_1)$ is a constant $c(x)>0$ depending, a priori, on $x$ only.  

Indeed, let $\gamma\in S(x)$. Then there is $\sigma, \sigma'\in \hat\Gamma$ such that $r_0^t\gamma=r_0^t\sigma$ and $r_1^t\gamma= r_1^t\sigma'$. Since $\gamma_t=x$, we have that $\sigma_t=\sigma_t'=x$. Then from \eqref{constantage}, we see that $\ell(\sigma_0,\sigma_1)=\ell(\sigma'_0, \sigma'_1)= \ell(\sigma_0, \sigma'_1)=\ell(\sigma'_0,\sigma_1)$. \\

Now let $s,s'\in [0,1]$, $s<s'$. If $s'\leq t$, then 

\[\ell(\gamma_s,\gamma_{s'})= \ell(\sigma_{s}, \sigma_{s'})= (s'-s)\ell(\sigma_0, \sigma_1)=(s'-s)\ell(\sigma_0,\sigma_1')=(s'-s)\ell(\gamma_0,\gamma_1).\]
Similarly if $t\leq s$, we obtain $\ell(\gamma_s,\gamma_{s'})=(s'-s)\ell(\gamma_0,\gamma_1)$. Finally in the case $s\leq t\leq s'$, we obtain
\[\ell(\gamma_s,\gamma_{s'})= \ell(\sigma_s, \sigma'_{s'})\geq \ell(\sigma_s, \sigma_t)+\ell(\sigma'_t,\sigma'_{s'}) = (t-s)\ell(\sigma_0, \sigma_1)+(s'-t)\ell(\sigma'_{0},\sigma'_{1})= (s'-s)\ell(\gamma_0,\gamma_1).\]
This proves that $\gamma$ is a geodesic. Now let $\bar\gamma\in S$. Then there is $\bar\sigma,\bar \sigma'\in \hat\Gamma$ such that $r_0^t\bar\gamma= \bar \sigma$ and $r_t^1\bar\gamma=\bar\sigma'$, and moreover comparing the ages of the four geodesics $\sigma,\sigma',\bar\sigma, \bar\sigma'$ using \eqref{constantage}, we see that 
\[\ell(\gamma_0,\gamma_1)=\ell(\sigma_0,\sigma'_1)= \ell(\sigma_0, \bar\sigma'_1)= \ell(\bar\sigma_0, \bar\sigma'_1)=\ell(\bar\gamma_0,\bar\gamma_1), \]
hence there is some constant $c(x)>0$ depending only on $x$ such that $\ell(\gamma_0,\gamma_1)=c(x)$ for all $\gamma\in S$. In particular we have that $\zeta^M_x\in \Pcal(\Geo(Y))$ for all $\nu_t$-a.e\ $x\in Y$, and so $\zeta^M\in \Pcal(\Geo(X))$. 

Disintegrating $d\zeta (\gamma)= \zeta_x(\gamma)d\nu_t$, one can see, by the same proof as above, that $\zeta_x$ is also concentrated on the set of geodesic of constant age $c(x)$, for $\nu_t$-a.e\ $x\in Y$. 

Now observe 
\begin{align*}
 \int \ell^q(\gamma_0,\gamma_1) d\zeta^M(\gamma) &= \int \int \ell^q(\gamma_0,\gamma_1)d \zeta^M_x(\gamma)d\nu_t(x) = \int c^q(x)d\nu_t(x)=\int c^q(x) d\zeta_x(\gamma) d\nu_t(x)\\
 &=\int\int \ell^q(\gamma_0,\gamma_1)d \zeta_x(\gamma)d\nu_t(x)=\int \ell^q(\gamma_0,\gamma_1) d\zeta(\gamma) = \ell_q^q(\nu_0,\nu_1).\end{align*}

Thus $\zeta^M$ is $q$-optimal. 

\paragraph{Branchingness of $\zeta^M$:}
Let $C\subset \Geo(Y)$ be a set on which $\zeta^M$ is concentrated. We will show that $C$ is timelike branching. 

For each $i=1,2$, we define $\eta^-_{k_1}= (r_0^t)_\# \eta_{k_i}$. We then disintegrate $d\eta^-_{k_i} = (\eta^-_{k_i})_x d\mu_{t,k_i}$. Since $\mu_{t,k_1}\not\perp \mu_{t,k_2}$, we can find a Borel set $E\subset Y$ of positive $\nu_t$-measure such that $(\eta^-_i)_x$ is concentrated on $e_1^{-1}(x)$ for all $x\in E$. We may write 
\[\zeta^-_x= \frac 12(\eta^-_{k_1})_x+\frac 12 (\eta^-_{k_2})_x.\]
 As $\mu_{0,k_1}\perp \mu_{0,k_2}$, we have that $\eta^-_{k_1}\perp \eta^-_{k_2}$, and therefore $(\eta^-_{k_1})_x\perp (\eta^-_{k_2})_x$.  We then have 
 \[\zeta^-_x\otimes \zeta^+_x = \frac 12(\eta^-_{k_1})_x\otimes \zeta^+_x+\frac12(\eta^-_{k_2})_x\otimes \zeta^+_x \]
 
 Since $\zeta^M$ is concentrated on $C$, $\zeta^M_x$ is concentrated on $C$, and $\zeta^-_x\otimes \zeta^+_x $ is concentrated on $D:= \glu^{-1}(C)\subset C([0,1],Y)^2$. Moreover $(\eta^-_{k_1})_x\perp (\eta^-_{k_2})_x$ implies that $(\eta^-_{k_1})_x\otimes \zeta^+_x\perp (\eta^-_{k_2})_x\otimes \zeta^+_x$. Therefore there is a disjoint partition $A\sqcup B = D$ such that $(\eta^-_{k_1})_x\otimes \zeta^+_x$ concentrates on $A$, and $(\eta^-_{k_2})_x\otimes \zeta^+_x$ concentrates on $B$. Let $p_2:C([0,1],Y)^2\to C([0,1], Y)$ denote the projection map onto the second coordinate, $p_2(\gamma, \sigma)=\sigma$.  Note that for both $i=1,2$, 
 \[\zeta_x^+=(p_2)_\# ((\eta^-_{k_i})_x\otimes \zeta^+_x).\]
It follows that $\zeta_x^+$ is concentrated on both $p_2(A)$ and $p_2(B)$, so it cannot be that these two sets are disjoint. But $A$ and $B$ are disjoint, so therefore there are three geodesics $\gamma,\gamma', \sigma$ such that $(\gamma,\sigma)\in A$, $(\gamma',\sigma)\in B$. Then $\glu(\gamma,\sigma_1), \glu(\gamma',\sigma)\in C$, but the curves $\glu(\gamma, \sigma)$, $\glu(\gamma', \sigma)$ are two distinct geodesics which restrict to the same geodesic $\sigma$. Thus $C$ is timelike branching. 
\end{proof}

\subsubsection{The Spacetime Structure of $\Pcal(X)$}
It is useful to know when the spacetime structure of $X$ is inherited by $\Pcal(X)$. The following lemma shows that compactness is enough to guarantee this. 

\begin{lemma}\label{spacetimestructure}
Suppose $(X, \ell, \tau) $ is a compact synthetic spacetime. Then $(\Pcal(X), \ell_q)$ is a compact synthetic spacetime for any $q\in(0,1)$.
\end{lemma}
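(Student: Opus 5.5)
We check the three items of \Cref{spacetimedef} for $(\Pcal(X),\ell_q)$ with the narrow topology, together with compactness, in the order: topology, semicontinuity and finiteness of $\ell_q$, causal geodesy.

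\textbf{Topology and compactness.} Since $X$ is compact and Polish, fix a complete separable metric $d$ generating $\tau$; it is bounded. By the Wasserstein proposition, $(\Pcal(X),W_1)$ is Polish and $W_1$-convergence coincides with narrow convergence, so the narrow topology is Polish. Every family in $\Pcal(X)$ is tight because $X$ is compact, so Prokhorov gives narrow compactness. Thus $\Pcal(X)$ is a compact Polish space.

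\textbf{Upper semicontinuity and finiteness of $\ell_q$.} Since $\ell$ is upper semicontinuous on the compact set $X^2$ and never equals $+\infty$, it is bounded above by some $L<\infty$. Hence $\ell_q\le L$. Any pair is dualizable (take constant $a,b$), so optimal couplings exist. Let $\mu^n\narrowto\mu$, $\nu^n\narrowto\nu$. Passing to a subsequence realizing $\limsup\ell_q(\mu^n,\nu^n)$, we may assume $\ell_q(\mu^n,\nu^n)\ge0$; otherwise the value is $-\infty$ and there is nothing to prove. Choose $\pi^n\in\Opt_q(\mu^n,\nu^n)$. The $\pi^n$ live on the compact $X^2$, so a subsequence converges narrowly to some $\pi\in\Pi(\mu,\nu)$. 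Care is needed with the value $-\infty$: $\ell_q(\mu^n,\nu^n)\ge0$ forces $\pi^n$ to be concentrated on the closed set $X^2_\le=\{\ell\ge0\}$ (closed by upper semicontinuity), and Portmanteau shows $\pi(X^2_\le)\ge\limsup\pi^n(X^2_\le)=1$. On $X^2_\le$ the function $\ell^q$ is bounded and upper semicontinuous, so Portmanteau (iii), applied to $\ell_+^q$, gives
\[\limsup_n \ell_q^q(\mu^n,\nu^n)=\limsup_n\int \ell_+^q\,d\pi^n\le\int\ell_+^q\,d\pi=\int\ell^q\,d\pi\le \ell_q^q(\mu,\nu).\]
This proves upper semicontinuity; finiteness was shown above.

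\textbf{Causally geodesic.} This is the main step. Let $\ell_q(\mu_0,\mu_1)\ge0$ and take $\pi\in\Opt_q(\mu_0,\mu_1)$, which is concentrated on $X^2_\le$. Consider the set $G\subset X^2\times\Geo(X)$ of triples $(x,y,\gamma)$ with $\gamma_0=x$ and $\gamma_1=y$. Since $X$ is causally geodesic, $G$ projects onto $X^2_\le$. Using compactness of $X$ and upper semicontinuity of $\ell$, I would show that $G$ is closed, via an Arzel\`a--Ascoli-type argument for geodesics. The first thing to try is global hyperbolicity, which is automatic since emeralds are closed subsets of the compact $X$, combined with the argument of \Cref{geodesicpathtocurve} to control limits of geodesics. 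A measurable selection theorem (Kuratowski--Ryll-Nardzewski) then gives a Borel map $S:X^2_\le\to\Geo(X)$ with $S(x,y)$ a geodesic from $x$ to $y$. Set $\eta:=S_\#\pi$ and $\mu_t:=(e_t)_\#\eta$. The path $t\mapsto\mu_t$ is narrowly continuous by dominated convergence, since each $\gamma$ is continuous. For $s\le t$, the coupling $(e_s,e_t)_\#\eta$ gives
\[\ell_q(\mu_s,\mu_t)\ge\Big(\int (t-s)^q\ell^q(\gamma_0,\gamma_1)d\eta\Big)^{1/q}=(t-s)\ell_q(\mu_0,\mu_1).\]
The reverse triangle inequality for $\ell_q$ then gives $\ell_q(\mu_0,\mu_s)+\ell_q(\mu_s,\mu_t)+\ell_q(\mu_t,\mu_1)\le\ell_q(\mu_0,\mu_1)$, which combined with the lower bounds forces equality everywhere. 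So $t\mapsto\mu_t$ is a causal geodesic curve.

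I expect the hard parts to be the closedness of $G$ and the measurable selection. If the closedness proof fails, a fallback is to approximate $\pi$ by finitely supported couplings, where only finitely many geodesics need to be chosen, and then pass to a limit of the resulting plans using compactness of $\Pcal(X)$ and the upper semicontinuity of $\ell_q$ established above.
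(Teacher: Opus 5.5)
Your first two steps match the paper's: the Polish topology via $W_1$, and upper semicontinuity of $\ell_q$ by extracting a limit of optimal couplings and applying Portmanteau. Your version of the second step is more careful than the paper's about the value $-\infty$. You first show the limit coupling lives on the closed set $X^2_\le$ and then work with $\ell_+^q$, whereas the paper applies Portmanteau directly to $\ell^q$, which is not bounded below.

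\textbf{The gap is in the causal-geodesic step.} The paper does not build the plan by hand. It invokes \Cref{optimalplanprop} (every $q$-optimal coupling on a compact synthetic spacetime is induced by a $q$-optimal plan) and sets $\mu_t=(e_t)_\#\eta$. You instead try to prove this lifting through a measurable selection. The step everything rests on is closedness of $G$ in $X^2\times C([0,1],X)$ via an Arzel\`a--Ascoli argument, and this is false under the lemma's hypotheses, for two reasons.
\begin{itemize}
\item There is no equicontinuity. A null geodesic satisfies $\ell(\gamma_s,\gamma_t)=(t-s)\cdot 0$ under every monotone reparametrization, so $\Geo(X)$ contains non-equicontinuous families.
\item Since $\ell$ is only upper semicontinuous, uniform limits of geodesics need not be geodesics. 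Let $X\subset\R^2$ be $\sigma=[0,1]\times\{0\}$ together with $\sigma_n=[0,1]\times\{1/n\}$. Set $\ell=b-a$ between ordered points $(a,0)\le(b,0)$, $\ell=0$ between ordered points of each $\sigma_n$, and $\ell=-\infty$ otherwise. This is a compact synthetic spacetime. The null geodesics $t\mapsto(t^2,1/n)$ converge uniformly to $t\mapsto(t^2,0)$, which is not affinely parametrized and so is not in $\Geo(X)$. Your $\ell_q$ computation needs the affine parametrization.
\end{itemize}
\Cref{geodesicpathtocurve} does not rescue this: it concerns continuity of a single timelike geodesic path, not limits of families. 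Even granting closedness, Kuratowski--Ryll-Nardzewski needs weak measurability of $(x,y)\mapsto\{\gamma\in\Geo(X):\gamma_0=x,\gamma_1=y\}$. That is not automatic when $\Geo(X)$ is not $\sigma$-compact.

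\textbf{The fallback has similar problems.}
\begin{itemize}
\item Without equicontinuity in $t$ you only get limits along a countable dense set of times.
\item Upgrading these to a continuous curve with \Cref{geodesicpathtocurve} would mean applying it in $\Pcal(X)$. That presupposes $\Pcal(X)$ is causally geodesic, which is what you are proving.
\item It gives nothing in the null case $\ell_q(\mu_0,\mu_1)=0$.
\item Narrow approximation $\pi^k\narrowto\pi$ does not give $\liminf_k\int\ell^q\,d\pi^k\ge\int\ell^q\,d\pi$ for merely upper semicontinuous $\ell$, unless the atoms are chosen near-maximally.
\end{itemize}

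\textbf{How to repair it.} The simplest fix is to cite \Cref{optimalplanprop}, as the paper does. For a self-contained route, replace closedness by measurability:
\begin{itemize}
\item Take a continuous $\gamma$ with $\ell(\gamma_0,\gamma_1)\ge 0$. Then $\gamma\in\Geo(X)$ if and only if $\ell(\gamma_s,\gamma_t)\ge (t-s)\ell(\gamma_0,\gamma_1)$ for all rational $0\le s<t\le 1$. Pass to real times by upper semicontinuity; the reverse triangle inequality then forces equality.
\item Hence $\Geo(X)$, and therefore $G$, are Borel.
\item The Jankov--von Neumann uniformization theorem then gives a universally measurable, hence $\pi$-measurable, selection $S$ on $X^2_\le$.
\end{itemize}
With this, the rest of your argument goes through unchanged: $\eta=S_\#\pi$, narrow continuity by dominated convergence, the lower bound $\ell_q(\mu_s,\mu_t)\ge(t-s)\ell_q(\mu_0,\mu_1)$, and the reverse triangle inequality.
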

\begin{proof}
As $X$ is compact, it is well-known that $\Pcal(X)$ is compact in the narrow topology. Therefore we need only prove the following properties: 
\begin{enumerate}
\item The narrow topology is Polish;
\item The time separation $\ell_q$ is upper semicontinuous and does not attain $+\infty$; 
\item The space $\Pcal(X)$ is causally geodesic.
\end{enumerate}

{\it  Proof of 1).} Since $X$ is compact, the narrow topology is equivalent to the topology generated by the Polish metric $W_p$ for any $p\geq 1$. \\

{\it Proof of 2).} Since $X$ is compact, and $\ell$ is upper semicontinuous and does not attain infinity, $\ell$ attains an upper bound $M$. But then $\ell_q$ attains the same upper bound. 

Let $\{(\mu_n, \nu_n)\}_{n\in\N}\subset \Pcal(X)^2$ be a sequence of pairs of measures which narrowly converge to $(\mu,\nu)$.  Suppose by way of contradiction that 
\[\limsup_{n\to\infty}\ell_q^q(\mu_n,\nu_n) >\ell_q^q(\mu,\nu).\]
Then there is a $\{(\mu_{n_k}, \nu_{n_k})\}_{k\in\N}$ such that for all $k\in\N$, 
\begin{align*}
\ell_q^q(\mu_{n_k},\nu_{n_k})> \ell_q^q(\mu,\nu).
\end{align*}

Since $X$ is compact, each $(\mu_{n_k},\nu_{n_k})$ is dualizable. Hence there is $\pi_{n_k}\in \Opt_q(\mu_{n_k},\nu_{n_k})$. Tightness of $\{(\mu_{n_k}, \nu_{n_k})\}_{k\in\N}$ implies tightness of $\{\pi_{n_k}\}_{k\in\N}$.
Then, after passing to a subsequence and relabeling, there exists a limit coupling $\pi\in\Pcal(X\times X)$. Narrow continuity of the pushforwards of the projection maps, $(p_i)_\#$, for $i=0,1$ show that $\pi\in \Pi(\mu,\nu)$. Using that $\ell$ is upper semicontinuous, and thus also bounded above on the compact set $X^2$, we see from Portmanteau's theorem that 
\[\ell_q^q(\mu,\nu)< \limsup_{k\to\infty} \ell_q^q(\mu_{n_k}, \nu_{n_k})=\limsup_{k\to\infty}\int \ell^q d\pi_{n_k}\leq \int \ell^q d\pi \leq \ell_q^q(\mu,\nu), \]
a contradiction. Therefore $\ell_q$ is upper semicontinuous.\\

{\it Proof of 3).} Let $\mu,\nu\in \Pcal(X)$ be such that $\ell_q(\mu,\nu)\geq0$. Then there exists a $q$-optimal coupling $\pi\in \Opt_q(\mu,\nu)$ which is concentrated on $X_\leq^2$. From \Cref{optimalplanprop}, $\pi$ induces a $q$-optimal plan $\eta\in \OptGeo_q(\mu,\nu)$, and thus a geodesic $\mu_t:= (e_t)_\#\eta$ joining $\mu$ to $\nu$.  

\end{proof}
\subsection{Timelike Curvature-Dimension Conditions} 

We now present the timelike curvature conditions, introduced by McCann \cite{McCann}, and independently, Mondino--Suhr \cite{MS} in the smooth setting, and Cavalletti--Mondino \cite{CM} and Braun \cite{Braun}, in the non-smooth setting; they are Lorentzian analogues of the metric notions introduced in \cite{LV, SturmI, SturmII, EKS}. 

\subsubsection{Volume Distortion Coefficients}
\begin{defn}[Distortion coefficients] Let $k\in\R$. The {\it $k$-generalized sine function} $\sin_k(t)$ is the solution to the initial value problem 
\begin{align}
\begin{cases}
f''(t)+kf(t)=0;\\
f(0)=0;\\
f'(0)=1.
\end{cases}
\end{align}
For $t\in [0,1]$ and $\theta\geq 0$, the {\it reduced distortion coefficient} $\sigma_{k}^{(t)}(\theta)$ is defined as 
\begin{align}
\sigma_k^{(t)}(\theta):=\begin{cases}
\displaystyle \frac{\sin_k(t\theta)}{\sin_k(\theta)} & k\theta^2<\pi^2;\\
+\infty &\text{otherwise.}
\end{cases}
\end{align}

For $N>1$ and $K\in \R$ the {\it distortion coefficient} $\tau_{K,N}^{(t)}(\theta)$ is defined as 
\begin{align}
\tau_{K,N}^{(t)}(\theta):= t^{1/N}\left(\sigma_{K/(N-1)}^{(t)}(\theta)\right)^{1-1/N},
\end{align}
with the convention that $(\infty)^{1-1/N}=\infty$. 
\end{defn}

The reduced distortion coefficients are solutions to the differential equation $f''+k\theta^2f=0$ with boundary conditions $f(0)=0$ and $f(1)=1$. In fact, any $C^2$ function $f:I\to [0,\infty)$ solves $f''+kf\leq 0$ over an interval $I\subset \R$ iff 
\begin{align}\label{kconvexity1}
f((1-t)a+tb)\geq \sigma^{(1-t)}_k(|b-a|)f(a)+\sigma_k^{(t)}(|b-a|)f(b)\quad \forall a,b\in I.
\end{align}

If, moreover $f$ can be decomposed as $f^N=f_\|f_\perp^{N-1}$, with $f_\|''\leq 0$ and $f_\perp''+\frac{K}{N-1}f_\perp\leq0$, then $f$ satisfies 
\begin{align}\label{kconvexity2}
f((1-t)a+tb)\geq \tau^{(1-t)}_{K,N}(|b-a|)f(a)+\tau_{K,N}^{(t)}(|b-a|)f(b)\quad \forall a,b\in I.
\end{align}

Inequality \eqref{kconvexity2} is stronger than \eqref{kconvexity1} (with $k:=K/N$), in light of property 2 of the following proposition.
\begin{prop}[Properties of distortion coefficients, \text{\cite[Lem.\ 1.2]{SturmII}}]
Let $N> 1$ and $K\in \R$. 
\begin{enumerate}
\item For each $a,\theta\geq 0$ and each $t\in [0,1]$, $\sigma_{K}^{(t)}(a\theta)= \sigma_{a^2K}^{(t)}(\theta)$; 
\item For each $\theta\geq 0$ and each $t\in[0,1]$, $\sigma_{K/N}^{(t)}(\theta)\leq \tau_{K,N}^{(t)}(\theta)$;
\item For each $\theta\geq 0$ and $t\in[0,1]$, the map $K\mapsto \tau_{K,N}^{(t)}(\theta)$ is continuous, log-convex, and non-decreasing on the domain $\{K\in \R: K\theta^2<\pi^2\}$.
\end{enumerate}
\end{prop}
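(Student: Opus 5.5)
Property 1 follows from the defining ODE by a scaling argument. Property 2 and the log-convexity in property 3 follow from the classical log-convexity of $k\mapsto \sigma_k^{(t)}(\theta)$ (equivalently, of $\sin_k(t\theta)/\sin_k(\theta)$ in $k$), combined with the representation $\tau^{(t)}_{K,N}=t^{1/N}(\sigma^{(t)}_{K/(N-1)})^{1-1/N}$. Throughout I would restrict to the range $k\theta^2<\pi^2$, since otherwise the coefficients are $+\infty$ and the statements are trivial or conventional.

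\emph{Property 1.} Write $g(s):=\sin_K(as)$. Then $g''+a^2Kg=0$, $g(0)=0$ and $g'(0)=a$, so $g=a\sin_{a^2K}$. Hence
\[\frac{\sin_K(ta\theta)}{\sin_K(a\theta)}=\frac{\sin_{a^2K}(t\theta)}{\sin_{a^2K}(\theta)}.\]
The admissibility condition $K(a\theta)^2<\pi^2$ is the same as $(a^2K)\theta^2<\pi^2$, so both sides are infinite together.

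\emph{Log-convexity and monotonicity of $k\mapsto\sigma^{(t)}_k(\theta)$.} By property 1 it suffices to take $\theta=1$. I would use the product expansion $\sin_k(x)=x\prod_{n\ge1}\bigl(1-\frac{kx^2}{n^2\pi^2}\bigr)$, valid for all real $k$ (for $k<0$ this is the $\sinh$ product). This gives
\[\log\sigma_k^{(t)}(1)=\log t+\sum_{n\ge1}\Bigl[\log\Bigl(1-\frac{kt^2}{n^2\pi^2}\Bigr)-\log\Bigl(1-\frac{k}{n^2\pi^2}\Bigr)\Bigr].\]
Set $\phi(u)=-\log(1-u)$ for $u<1$, which is convex and increasing. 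Each summand equals $\phi(k c)-\phi(k c t^2)$ with $c=1/(n^2\pi^2)$. Its second derivative in $k$ is $c^2\bigl(\phi''(kc)-t^4\phi''(kct^2)\bigr)$, and $\phi''(u)=(1-u)^{-2}$.
\begin{itemize}
\item If $k\ge0$, then $kc\ge kct^2$, so $\phi''(kc)\ge\phi''(kct^2)\ge t^4\phi''(kct^2)$.
\item If $k<0$, I need $(1-kct^2)^2\ge t^4(1-kc)^2$, that is $1-kct^2\ge t^2(1-kc)$. This reduces to $1\ge t^2$, which holds.
\end{itemize}
So every summand is convex. Its first derivative is $c\bigl(\phi'(kc)-t^2\phi'(kct^2)\bigr)\ge0$ by the analogous comparison, so every summand is non-decreasing. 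Convergence of the series is locally uniform on $k<\pi^2$, which legitimises termwise differentiation. Continuity is clear from continuous dependence of ODE solutions on parameters. This establishes continuity, log-convexity and monotonicity of $\sigma$, and hence of $\tau_{K,N}^{(t)}$, which is a positive power of $\sigma_{K/(N-1)}$ times a constant. This is property 3.

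\emph{Property 2.} I want $\sigma^{(t)}_{K/N}\le t^{1/N}\bigl(\sigma^{(t)}_{K/(N-1)}\bigr)^{1-1/N}$. Note that $t=\sigma^{(t)}_0$ and that
\[\frac KN=\frac1N\cdot0+\Bigl(1-\frac1N\Bigr)\frac K{N-1}.\]
Log-convexity in $k$ then gives exactly
\[\sigma^{(t)}_{K/N}\le\bigl(\sigma^{(t)}_0\bigr)^{1/N}\bigl(\sigma^{(t)}_{K/(N-1)}\bigr)^{1-1/N}.\]
If $K\theta^2/(N-1)\ge\pi^2$ the right-hand side is $+\infty$ and there is nothing to prove.

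\emph{Main obstacle.} The hardest step is verifying convexity of the individual summands for $k<0$ and justifying the infinite-product manipulation. As a fallback I would prove log-convexity directly through a Riccati/Sturm comparison for $\partial_k^2\log\sin_k$.
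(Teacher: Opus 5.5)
The paper gives no proof of this proposition; it quotes Sturm's Lemma 1.2 and uses it as a black box. Your proposal therefore has to stand on its own, and it does: it is a correct, self-contained proof.

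The structure is the natural one:
\begin{itemize}
\item take log-convexity and monotonicity of $k\mapsto\sigma^{(t)}_k(\theta)$ as the master fact;
\item reduce to $\theta=1$ via the scaling identity of item 1;
\item obtain item 2 from $\frac KN=\frac1N\cdot 0+(1-\frac1N)\frac{K}{N-1}$ together with $\sigma^{(t)}_0=t$.
\end{itemize}
The Weierstrass product lets you treat the trigonometric case ($k>0$) and the hyperbolic case ($k<0$) uniformly, by an elementary termwise check. That check needs no case split. For either sign of $k$, both your first-derivative and second-derivative inequalities reduce to $1-kct^2\ge t^2(1-kc)$, i.e.\ to $t^2\le 1$. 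So the step you flag as the main obstacle is already done, and no Riccati/Sturm fallback is needed.

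Three small remarks.

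\emph{Termwise differentiation.} You do not need it. Each partial sum of the series for $\log\sigma^{(t)}_k(1)$ is convex and non-decreasing in $k$, and both properties pass to pointwise limits. As written, locally uniform convergence of the series alone would not justify differentiating termwise. You would need uniform convergence of the differentiated series; this does hold here, since those terms are $O(n^{-2})$ and $O(n^{-4})$.

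\emph{Domain in item 3.} What you actually prove holds on $\{K\theta^2<(N-1)\pi^2\}$, which is exactly where $\tau^{(t)}_{K,N}(\theta)$ is finite. For $N\ge 2$ this contains the domain in the statement. For $1<N<2$, however, the stated domain also contains the strip $(N-1)\pi^2\le K\theta^2<\pi^2$, on which $\tau\equiv+\infty$ under the paper's convention. There the claims only make sense in an extended-valued sense, and continuity genuinely fails at $t=1$, where $\tau$ jumps from $1$ to $+\infty$. This is an imprecision in the statement, not in your proof, and your restriction to the finite range is the right reading.

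\emph{Degenerate cases.} The cases $a=0$ or $\theta=0$ in item 1 require the convention $\sigma^{(t)}_k(0)=t$. The case $t=0$, where $\log t=-\infty$, is trivial. Both are worth stating explicitly.
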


Properties 1 and 3 above give the following corollary, which will be useful for us later on. 
\begin{corollary}\label{logconvexityoftau}
For each $N>1$ and $K\in \R$, the map $\theta\mapsto \tau_{K,N}^{(t)}\big(\sqrt\theta\big)$ is continuous and log-convex on the domain $\{\theta\geq 0: K\theta^2<\pi^2\}$.
\end{corollary}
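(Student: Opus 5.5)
The plan is to reduce the statement to property 3 of the preceding proposition by means of the scaling identity in property 1. Fix $N>1$, $K\in\R$ and $t\in[0,1]$. The first step is to rewrite $\tau_{K,N}^{(t)}(\sqrt\theta)$ in terms of a fixed argument. Property 1, applied with $a=\sqrt\theta$ and reference argument $1$, gives $\sigma_{K/(N-1)}^{(t)}(\sqrt\theta)=\sigma^{(t)}_{\theta K/(N-1)}(1)$. Since the factor $t^{1/N}$ does not depend on $\theta$, we get
\[
\tau_{K,N}^{(t)}\big(\sqrt\theta\big)=t^{1/N}\Big(\sigma^{(t)}_{\theta K/(N-1)}(1)\Big)^{1-1/N}=\tau^{(t)}_{\theta K,N}(1).
\]
The same identity holds in the infinite case: the condition $\frac{K}{N-1}\theta<\pi^2$ is exactly the finiteness condition for $\sigma^{(t)}_{\theta K/(N-1)}(1)$. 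Consequently the map in question is the composition $\theta\mapsto \theta K\mapsto \tau^{(t)}_{\theta K,N}(1)$, that is, the map $K'\mapsto\tau^{(t)}_{K',N}(1)$ precomposed with a linear map.

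The second step is to transfer the properties. Property 3, applied at the argument $1$, says that $K'\mapsto \tau^{(t)}_{K',N}(1)$ is continuous and log-convex on $\{K': K'<\pi^2\}$. Precomposition with the affine map $\theta\mapsto\theta K$ preserves both continuity and log-convexity: if $g$ is log-convex, then $\log g(((1-s)\theta_0+s\theta_1)K)\le(1-s)\log g(\theta_0K)+s\log g(\theta_1K)$. The domain pulls back to $\{\theta\ge0:\theta K<\pi^2\}$, which is an interval, so convexity along it makes sense. When $K=0$ the map is constant, and when $K<0$ the domain is all of $[0,\infty)$.

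The main obstacle is bookkeeping with the domain. The corollary writes the domain as $\{K\theta^2<\pi^2\}$, but after the substitution the natural condition is $K\theta<\pi^2$ (with the factor $N-1$ absorbed, depending on the convention used in property 3). I would check that the stated domain is contained in the set where $\theta\mapsto\tau^{(t)}_{K,N}(\sqrt\theta)$ is finite, or else read it as the finiteness domain, and then restrict to it. The statement is only meaningful on a convex set, and the set $\{\theta\ge0:\theta K<\pi^2\}$ (suitably scaled by $N-1$) is convex, so restriction causes no problem. I would also verify the endpoint $\theta=0$: there $\sigma^{(t)}_k(0)=t$ by the limit, and continuity at $0$ again follows from continuity in $K'$ at $K'=0$.
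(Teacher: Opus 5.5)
Your argument is the paper's own; the paper only remarks that properties 1 and 3 give the corollary. The identity $\tau^{(t)}_{K,N}(\sqrt\theta)=\tau^{(t)}_{\theta K,N}(1)$, followed by precomposing the log-convex map $K'\mapsto\tau^{(t)}_{K',N}(1)$ with the linear map $\theta\mapsto\theta K$, is correct, including the cases $\theta=0$ and $K\le 0$.

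The domain question you leave open should be settled, not checked. The check you propose (that the stated domain lies inside the finiteness domain) fails in general.

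\begin{itemize}
\item The finiteness domain, which you already found in your first paragraph, is $\{\theta\ge 0: K\theta<(N-1)\pi^2\}$.
\item Correspondingly, property 3 at argument $1$ must be read on $\{K'<(N-1)\pi^2\}$, not $\{K'<\pi^2\}$. For $1<N<2$ the latter set contains points where $\tau=+\infty$.
\item The set $\{\theta\ge0: K\theta^2<\pi^2\}$ written in the corollary is not contained in the finiteness domain once $K>(N-1)^2\pi^2$. For example, at $t=1$ the map equals $1$ for $\theta<(N-1)\pi^2/K$ and $+\infty$ from that point on. This jump point lies in $[0,\pi/\sqrt K)$, so continuity genuinely fails there.
\end{itemize}

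So state and prove the corollary on $\{\theta\ge0: K\theta<(N-1)\pi^2\}$. Your argument establishes exactly that version.
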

\subsubsection{Entropies and Curvature Conditions}
\begin{defn}[R\'enyi and Boltzmann entropy] Let $N\in [0,\infty)$. The {\it $N$-R\'enyi entropy} $S_N:\Pcal(X)\to [-\infty,0]$ is defined as 
\begin{align}
S_N(\mu):= -\int \left(\diff{\mu}{\m}\right)^{1-1/N}d\m,
\end{align}
where $\diff{\mu}{\m}$ is the Lebesgue-Radon-Nikodym derivative of $\mu$ with respect to $\m$. The {\it Boltzmann-Shannon entropy} $\Ent: \Pcal(X)\to [0,\infty]$ is defined as 
\begin{align}
\Ent(\mu):=\begin{cases}
\displaystyle \int \rho \log\rho d\m & \text{if $d\mu = \rho d\m$;}\\
+\infty &\text{otherwise.}
\end{cases}
\end{align}
\end{defn}
Using the entropies, we define the synthetic curvature conditions $\TMCP(K,N)$ and $\TCD_q(K,N)$, as well as their entropic counterparts. For the entropic versions, we consider the exponential of the entropy
\[U_N(\mu):=\exp\left(-\frac{1}{N}\Ent(\mu)\right).\]
\begin{defn}[Curvature-dimension conditions] Let $(X,\ell, \tau, \m)$ be a measured synthetic spacetime, and let $N>1$, $K\in\R$. We say $X$ satisfies... 
\begin{enumerate}
\item ... the {\it $q$-timelike $(K,N)$-curvature-dimension condition} $\TCD_q(K,N)$ if every $q$-timelike pair $(\mu_0,\mu_1)\in \Pcal_c^{ac}(X)^2$ admits a timelike $q$-optimal plan $\eta$ such that for all $N'>N$ and all $0\leq t\leq 1$,
\begin{align}
S_{N'}((e_t))_\# \eta)\leq -\int_{X\times X} \Big(\tau_{K,N'}^{(1-t)}(\ell(\gamma_0,\gamma_1)) \rho_0(\gamma_0)^{-1/N'}+\tau_{K,N'}^{(t)}(\ell(\gamma_0,\gamma_1)) \rho_1(\gamma_1)^{-1/N'}\Big)d\eta(\gamma),
\end{align}
where $d\mu_i=\rho_id\m$ for $i=0,1$. 
\item ... the  {\it timelike $(K,N)$-measure contraction property} $\TMCP(K,N)$, if every totally timelike pair $(\mu,\delta_{x_0})\in\Pcal_c^{ac}(X)\times \Pcal_c(X)$ induces a timelike $q$-optimal plan $\eta$ such that for all $N'>N$, and all $0\leq t\leq 1$,  
\begin{align}
S_{N'}((e_t)_\#\eta)\leq -\int_{X\times X}\tau_{K,N'}^{(1-t)}(\ell(x,x_0)) \rho(x)^{-1/N'}d\mu(x),
\end{align}
where $d\mu=\rho d\m$. 
\item ... the {\it entropic $q$-timelike $(K,N)$-curvature-dimension condition} $\TCD^e_q(K,N)$ if every $q$-timelike pair $(\mu_0,\mu_1)\in \Pcal_c^{ac}(X)^2$ admits a timelike $q$-optimal plan $\eta$ such that for all $0\leq t\leq 1$,
\begin{align}
U_N((e_t)_\#\eta)\geq \sigma_{K/N}^{(1-t)}(\Lambda)U_N(\mu_0)+\sigma_{K/N}^{(t)}(\Lambda)U_N(\mu_1),
\end{align}
where 
\[\Lambda:= \left(\int_{X\times X} \ell^2(\gamma_0,\gamma_1)d\eta(\gamma)\right)^{1/2},\]

\item ...  the {\it entropic timelike (K,N)-measure contraction property} $\TMCP^e(K,N)$, if every totally timelike pair $(\mu,\delta_{x_0})\in \Pcal_c^{ac}(X)\times \Pcal_c(X)$ induces a timelike $q$-optimal plan $\eta$ such that for all $0\leq t\leq 1$, 
\begin{align}
U_N((e_t)_\#\eta)\geq\sigma_{K/N}^{(1-t)}(\Lambda) U_N(\mu).
\end{align}
\end{enumerate}
\end{defn}

We now introduce the timelike strong Brunn Minkowski inequality. Given a Borel set $A\subset X$ of positive finite $\m$-measure, we write $\m_A$ for the uniform measure on $A$, i.e $\m_A:= \m(A)^{-1}\m\vert_A$. We say a pair of Borel sets $(A,B)$ is {\it $q$-timelike} if both sets are of positive finite $\m$-measure, and the pair of uniform measures $(\m_A,\m_B)$ is $q$-timelike.

\begin{defn}[Timelike strong Brunn-Minkowski inequality]
Let $X=(X,d,\ell, \m)$ be a measured $\mathcal{K}$-globally hyperbolic spacetime. Let $q\in(0,1)$, $K\in \R$, and $N>1$. 
We say $X$ satisfies... 

\begin{enumerate}
\item ... the {\it timelike $(K,N)$-Brunn-Minkowski inequality}, $\TBM(K,N)$, if for any pair $(A,B)$ of compact totally timelike sets,  
\begin{align}
\m^{1/N}(G_t(A,B))\geq \tau^{(1-t)}_{K,N}(\Theta)\m^{1/N}(A)+\tau^{(t)}_{K,N}(\Theta)\m^{1/N}(B),
\end{align}
where 
\[G_t(A,B):= \{\gamma_t: \gamma\in \Geo(X), (\gamma_0,\gamma_1)\in (A\times B)\cap X_\ll^2\},\]
and 
\[\Theta:= \begin{cases}
\inf_{A\times B} \ell(x,y) & K\geq 0; \\
\sup_{A\times B} \ell(x,y) & K<0.
\end{cases}\]

\item ... the {\it $q$-timelike strong $(K,N)$-Brunn-Minkowski inequality}, $\sTBM_q(K,N)$, if for any pair $(A,B)$ of compact $q$-timelike sets, there exists a timelike $q$-optimal plan $\eta$ such that 
\begin{align}
\m^{1/N}(\spt(e_t)_\#\eta)\geq \tau^{(1-t)}_{K,N}(\Theta)\m^{1/N}(A)+\tau^{(t)}_{K,N}(\Theta)\m^{1/N}(B).
\end{align}

\item ... the {\it reduced $\sTBM_q(K,N)$ inequality}, $\sTBM_q^*(K,N)$, if for any pair $(A,B)$ of compact $q$-timelike sets, there exists a timelike $q$-optimal plan $\eta$ such that 
\begin{align}
\m^{1/N}(\spt(e_t)_\#\eta)\geq \sigma^{(1-t)}_{K/N}(\Theta)\m^{1/N}(A)+\sigma^{(t)}_{K/N}(\Theta)\m^{1/N}(B).
\end{align}
\end{enumerate}
\end{defn}

\subsection{Simple Measures and Simple Sequences}
 Observe that for any uniform measure $\m_A$, $-S_N(\m_A)=U_N(\m_A)= \m^{1/N}(A)$. One can therefore think of the $\sTBM_q(K,N)$ condition as a curvature condition at the level of uniform measures. It's perhaps straightforward to see then that $\TCD_q(K,N)$ implies $\sTBM_q(K,N)$, since uniform measures are in $\Pcal_c^{ac}(X)$. To prove the converse amounts to showing that {\it any} probability measure in $\Pcal_c^{ac}(X)$ can be approximated in some way by uniform measures. At the level of the densities of the measures in $\Pcal_c^{ac}(X)$, this amounts to approximating $L^1(\m)$ functions by simple, that is, piecewise constant, functions.  

\begin{defn}[Simple measures]
We call a measure $\mu\in \Pcal(X)$ {\it simple} if it is a finite convex combination of mutually singular uniform measures. That is, there exists $M\in \N$, mutually disjoint Borel sets $\{A_j\}_{j=1}^n$ of positive finite $\m$-measure, and positive numbers $\{\lambda_j\}_{j=1}^n$ such that $\sum_{j=1}^n \lambda_j=1$, and 
\begin{align*}
\mu= \sum_{j=1}^n \lambda_j \m_{A_j}.
\end{align*}
\end{defn}

We want to be able to approximate measures $\mu\in \Pcal_c^{ac}(X)$ by a sequence of simple measures $\{\mu^n\}_{n\in\N}$. Later on we will want to choose sequences such that a $q$-timelike pair $(\mu_0,\mu_1)$ can be approximated by a sequence of pairs of simple measures $(\mu_0^n, \mu_1^n)$ which are ``correlated'' in some precise way. To do this we require existence and uniqueness of optimal maps, so we delay this lemma for later. For now we prove that any $\mu\in \Pcal_c^{ac}(X)$ can be approximated by a sequence of simple measures with some additional desirable properties. Such a sequence we call a {\it simple sequence}. 
 
\begin{prop}[Existence of simple sequences]\label{simplesequence} Let $\mu\in \Pcal_c^{ac}(X)$. Then there exists a sequence $\{\mu^n\}_{n\in\N}$ of simple measures, which we call a ``simple sequence", which satisfies the following properties: 
\begin{enumerate}
\item (Narrow Convergence). As $n\to\infty$, $\mu^n\narrowto\mu$. 
\item (Controlled Decomposition). For each metrizer $d$ of $X$, and each positive sequence $\delta_n\to 0$, we may write $\mu^n:= \sum_{j=1}^{M_n}\lambda_j^n \m_{A_j^n}$, where for each $n\in\N$ and each $j=1,\dots, M_n$, $A_j^n$ is compact and contained in $\spt\mu_0$, and $\diam_d(A_j^n)<\delta_n$. 

\item (Continuity along Entropy) For each $N\in\N$, $U_N(\mu^n)\to U_N(\mu)$, and $S_N(\mu^n)\to S_N(\mu)$. 

\end{enumerate}
\end{prop}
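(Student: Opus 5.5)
The plan is to approximate the density $\rho=\diff{\mu}{\m}$ by simple functions whose level sets are small compact pieces of $\spt\mu$, and then renormalize. Fix a metrizer $d$ and a sequence $\delta_n\to 0$. Since $\spt\mu$ is compact, for each $n$ we can cover it by finitely many Borel sets of $d$-diameter less than $\delta_n$. Intersecting these with the level bands $\{k2^{-n}\le \rho<(k+1)2^{-n}\}$ for $k=1,\dots,n2^n$ gives a finite disjoint partition $\{E_j^n\}$ of $\{2^{-n}\le\rho< n\}\cap\spt\mu$. On each $E_j^n$ the oscillation of $\rho$ is at most $2^{-n}$ and the diameter is less than $\delta_n$.

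Next we make the pieces compact. Since $\m$ is Radon on a Polish space, inner regularity gives compact sets $A_j^n\subset E_j^n$ with $\m(E_j^n\setminus A_j^n)<\epsilon_n/M_n$, where $\epsilon_n\to0$. We discard the pieces with $\m(A_j^n)=0$. The $A_j^n$ are disjoint, compact, contained in $\spt\mu$, and have diameter less than $\delta_n$. Set $\rho^n:=\sum_j \bar\rho_j^n \mathbf 1_{A_j^n}$, where $\bar\rho_j^n$ is the mean of $\rho$ on $A_j^n$, and put $\mu^n:=\rho^n\m/\|\rho^n\|_{L^1}$. This is simple, with $\lambda_j^n=\bar\rho_j^n\m(A_j^n)/\|\rho^n\|_{L^1}$.

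\textbf{Convergence in $L^1$.} We have
\[|\rho-\rho^n|\le 2^{-n}\ \text{on}\ \textstyle\bigcup_j A_j^n,\qquad \rho^n=0\ \text{elsewhere},\]
so
\[\|\rho-\rho^n\|_{L^1}\le 2^{-n}\m(\spt\mu)+\int_{\{\rho<2^{-n}\}\cup\{\rho\ge n\}}\rho\,d\m+n\epsilon_n.\]
Choosing $\epsilon_n=2^{-n}/n$ makes every term tend to zero: the middle integral vanishes by dominated convergence, and $\m(\spt\mu)<\infty$ by compactness and the Radon property. Hence $\rho^n\to\rho$ in $L^1(\m)$ and $\|\rho^n\|_{L^1}\to1$. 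Then $L^1$ convergence of the normalized densities gives convergence in total variation, which implies narrow convergence. This proves property 1.

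\textbf{Continuity of the entropies.} This is the main obstacle, because the $U_N$ and $S_N$ functionals are not continuous under narrow convergence in general. We exploit instead the specific structure of the approximation.

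For $S_N$, all densities are supported in the fixed compact set $\spt\mu$ of finite $\m$-measure. The map $r\mapsto r^{1-1/N}$ is $(1-1/N)$-Hölder, so $|a^{1-1/N}-b^{1-1/N}|\le|a-b|^{1-1/N}$. By Jensen (or Hölder) on a finite measure space,
\[\Big|\int (\rho^n)^{1-1/N}d\m-\int\rho^{1-1/N}d\m\Big|\le \m(\spt\mu)^{1/N}\|\rho^n-\rho\|_{L^1}^{1-1/N}\to0.\]
The normalization constant tends to $1$, so $S_N(\mu^n)\to S_N(\mu)$.

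For $U_N$ it suffices to show $\Ent(\mu^n)\to\Ent(\mu)$. If $\Ent(\mu)$ is infinite, lower semicontinuity of $\Ent$ together with $\m(\spt\mu)<\infty$ gives $\Ent(\mu^n)\to\infty$, and both sides of $U_N$ equal $0$. If $\Ent(\mu)$ is finite, we have two inequalities to establish. The bound $\liminf\ge$ again follows from lower semicontinuity of relative entropy with respect to the finite measure $\m\vert_{\spt\mu}$. For $\limsup\le$, we use that $\rho^n$ is a conditional expectation of $\rho$ on the pieces $A_j^n$. By Jensen's inequality, the convex function $r\log r$ satisfies
\[\int_{A_j^n}\rho^n\log\rho^n\,d\m\le\int_{A_j^n}\rho\log\rho\,d\m .\]
Summing over $j$ gives
\[\int\rho^n\log\rho^n\,d\m\le\int_{\bigcup_j A_j^n}\rho\log\rho\,d\m,\]
which tends to $\int\rho\log\rho\,d\m$ by dominated convergence, since $\rho\log\rho\in L^1$ on a finite measure set. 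Renormalizing by $\|\rho^n\|_{L^1}\to1$ changes the entropy by $-\log\|\rho^n\|_{L^1}\to0$.

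The same conditional-expectation structure also gives the $S_N$ bound by concavity, which is an alternative to the Hölder argument. If needed, we replace $\mu$ by $\mu\vert_{\spt\mu}$, noting that $\spt\mu$ is the set intended by "$\spt\mu_0$" in the statement.
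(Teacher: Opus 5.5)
Your proof is correct, but at the key step it takes a different route from the paper. The paper approximates $\rho$ from below by an increasing sequence of simple functions $\bar\rho^n\uparrow\rho$ and splits the level sets with small balls. It then derives entropy convergence from a pointwise sandwich $\rho-\epsilon\le\tilde\rho^n\le\rho$ plus monotone convergence. Only afterwards does it shrink each piece to a compact subset by inner regularity, keeping the weights $\lambda_j^n$.

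You instead fix the small compact pieces first and replace $\rho$ by its averages on them, so a single quantitative $L^1$ estimate already absorbs the inner-regularity loss. $S_N$ then converges by H\"older continuity of $r\mapsto r^{1-1/N}$ on a set of finite measure. $\Ent$ converges by lower semicontinuity for the $\liminf$ and by Jensen's inequality on each piece for the $\limsup$.

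What your route buys is generality. A uniform bound $\rho-\epsilon\le\tilde\rho^n$ with $\tilde\rho^n$ simple is impossible when $\rho$ is essentially unbounded, so the paper's entropy argument really covers only bounded densities. Yours handles every compactly supported $\rho\in L^1(\m)$, including the case $\Ent(\mu)=+\infty$. The paper's monotone scheme gives the domination $\tilde\rho^n\le\rho$, and hence containment in $\spt\mu$, for free; but you get containment from $E_j^n\subset\spt\mu$ anyway.

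Two small repairs are needed.
\begin{enumerate}
\item The sets $\bigcup_jA_j^n$ are not monotone in $n$, so the convergence $\int_{\bigcup_jA_j^n}\rho\log\rho\,d\m\to\int\rho\log\rho\,d\m$ is not literally dominated convergence. Use instead that $\m(\{\rho>0\}\setminus\bigcup_jA_j^n)\to0$, which follows from the same decomposition of the complement as in your $L^1$ estimate. Combine this with absolute continuity of the integral of $|\rho\log\rho|\in L^1(\m)$.
\item With $c_n:=\|\rho^n\|_{L^1}$, the renormalization gives $\Ent(\mu^n)=c_n^{-1}\int\rho^n\log\rho^n\,d\m-\log c_n$, not a pure shift. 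This is harmless, because $\int\rho^n\log\rho^n\,d\m$ is bounded above by your Jensen bound and below by $-\m(\spt\mu)/e$.
\end{enumerate}
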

\begin{proof}
Let $\mu\in \Pcal_c^{ac}(X)$, and let $\rho\in L^1(\m)$ be such that $d\mu=\rho d\m$. There exists an increasing sequence $\bar\rho^n$ of simple functions which converge pointwise to $\rho$. Observe then that $\|\bar\rho^n\|_{L^(\m)}\to \|\rho\|_{L^1(\m)}=1$. We define $\tilde\mu^n$ via
\[d\tilde \mu^n:=\frac{\bar\rho^n}{\|\rho^n\|_{L^1(\m)}}d\m.\]
It's clear that $\tilde\mu^n$ is a simple measure for each $n$, and so we may write
\[\tilde\mu^n= \sum_{j=1}^{M_n} \lambda_j^n \m_{\tilde A_j^n}. \]
where $\{\tilde A_j^n\}_{j=1}^{M_n}$ is a collection of disjoint Borel sets of positive and finite $\m$-measure. Since $\bar \rho^n\leq \rho$, we have that $\tilde A_j^n\subset \spt\mu_0$ for each $n\in\N$ and each $j=1,\dots, M_n$. Since $\spt\mu_0$ is compact, for any metrizer $d$ and any positive sequence $\delta^n$, we may intersect each element of $\{\tilde A_j\}_{j=1}^{M_n}$ by finitely many open $d$-balls of diameter $\delta^n$, and so WLOG $\diam_d(\tilde A_j^n)<\delta^n$.

We now prove convergence in entropy. By construction for each $\epsilon>0$, there is $n_0\in\N$ such that for all $n>n_0$,
\[\rho-\epsilon\leq \tilde\rho^n\leq \rho.\]
Then observe that 
\begin{align*}
\rho^n\log\rho^n &= \frac{\tilde\rho^n}{\|\tilde\rho_n\|_{L^1(\m)}}\log\left(\frac{\tilde\rho^n}{\|\tilde\rho_n\|_{L^1(\m)}}\right)\\
&= \frac{\tilde\rho^n}{\|\tilde\rho_n\|_{L^1(\m)}}
\log\frac{\tilde\rho^n}{\|\tilde\rho_n\|_{L^1(\m)}}\chi_{\{\tilde \rho^n\geq1/e\}}+\frac{\tilde \rho^n}{\|\tilde\rho_n\|_{L^1(\m)}}\log\frac{\tilde \rho^n}{\|\tilde\rho_n\|_{L^1(\m)}}\chi_{\{\tilde\rho^n<1/e\}}\\
&\leq \frac{\rho}{1-\epsilon}
\log\frac{\rho}{1-\epsilon}\chi_{\{\rho_0\geq1/e\}}+(\rho-\epsilon)\log(\rho-\epsilon)\chi_{\{\rho<1/e\}}.
\end{align*}
As $x\mapsto x\log x$ is smooth, we find that there is some universal constant $C\in\R$ such that for all sufficiently small $\epsilon$, we have $\frac{\rho}{1-\epsilon}\log\frac{\rho}{1-\epsilon}\leq \rho\log\rho +C\epsilon$ and $(\rho-\epsilon)\log(\rho-\epsilon)\leq \rho\log\rho+C\epsilon$. Then we obtain 
\[\rho^n\log\rho^n\leq \rho\log\rho+C\epsilon,\]
for all sufficiently small $\epsilon$ and all sufficiently large $n$. By MCT, we see that $\Ent(\mu^n)\to\Ent(\mu)$, so $U_N(\mu^n)\to U_N(\mu)$. A similar proof shows that $S_N(\mu^n)\to S_N(\mu)$.

Therefore it would be the case that $\{\tilde\mu^n\}$ is a simple sequence for $\mu$, except that the sets $\tilde A_j^n$ are not compact. To remedy this, we use regularity of $\m$. For each $n\in\N$, and each $j=1,\dots, M_n$, there is compact set $A_j^n\subset \tilde A_j^n$ such that $\m(\tilde A_j^n\setminus A_j^n)<\m(\tilde A_j^n)/n$. 

We then define 
\[\mu^n:=\sum_{j=1}^{M_n}\lambda_j^n\m_{A_j^n}.\]
We observe that 
\[\m_{\tilde A_j^n}-\frac1n\leq\m_{A_j^n} \leq \frac{n}{n-1}\m_{\tilde A_j^n},\]
which implies that 
\begin{align}\label{tilderhoestimate}
\tilde \rho^n-\frac 1n\leq \rho^n\leq \frac{n}{n-1}\tilde\rho^n,
\end{align}
where $\rho^n:=\diff{\mu^n}{\m}$ and $\tilde\rho^n:=\diff{\tilde\mu^n}{\m}$. Equation \eqref{tilderhoestimate} implies that $\mu^n$ satisfies all the above properties, as well as the compactness of the supporting sets $A_j^n$. Thus $\{\mu^n\}_{n\in \N}$ is a simple sequence. 
\end{proof}

Next is a technical lemma which allows us to approximate midpoints between measures with quantitatively small support. 
 
\begin{lemma}[Midpoint approximation lemma]\label{midpointapproxlemma} Let $(X,\ell, \tau,\m)$ be a compact measured synthetic spacetime, and suppose $\ell_+$ is continuous. Then for each metrizer $d$ of $X$ and each $\epsilon>0$, there exists $\delta>0$ such that for each pair of totally timelike measures $(\mu_0,\mu_1)\in\Pcal(X)^2$ and each $\mu_t\in \Pcal(X)$ such that $\spt\mu_t\subset G_t(\spt\mu_0,\spt\mu_1)$, if $\diam_d(\spt\mu_0),\diam_d(\spt\mu_1)<\delta$ then we have 
\begin{align}
\ell_q^q(\mu_0, \mu_t)\geq t^q\ell^q_q(\mu_0,\mu_1)-\epsilon;\quad 
\ell_q^q(\mu_t, \mu_1)\geq (1-t)^q\ell^q_q(\mu_0,\mu_1)-\epsilon.
\end{align}
\end{lemma}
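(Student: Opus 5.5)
The plan is to argue by uniform continuity of $\ell_+^q$ on the compact set $X^2$, together with the compactness of $\{(x,y): \ell(x,y)\geq 0\}\cap$ relevant regions. Fix a metrizer $d$ and $\epsilon>0$. Since $X$ is compact and $\ell_+$ is continuous, $\ell_+^q$ is uniformly continuous on $X\times X$ (with respect to $d\times d$). I will choose $\delta>0$ so that $d(x,x')<\delta$ and $d(y,y')<\delta$ imply $|\ell_+^q(x,y)-\ell_+^q(x',y')|<\epsilon$. The idea is that when $\spt\mu_0$ and $\spt\mu_1$ have diameter less than $\delta$, every coupling gives essentially the same value of $\int\ell^q$. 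Hence $\ell_q^q(\mu_0,\mu_1)$ is within $\epsilon$ of $\ell^q(x_0,x_1)$ for any $x_0\in\spt\mu_0$ and $x_1\in\spt\mu_1$. Here total timelikeness ensures $\ell=\ell_+>0$ on $\spt\mu_0\times\spt\mu_1$, so $\ell^q=\ell_+^q$ there.

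Next I compare $\ell_q^q(\mu_0,\mu_t)$ with $t^q\ell_q^q(\mu_0,\mu_1)$. For any $z\in\spt\mu_t\subset G_t(\spt\mu_0,\spt\mu_1)$ there is a geodesic $\gamma$ with $\gamma_0=a\in\spt\mu_0$, $\gamma_1=b\in\spt\mu_1$ and $\gamma_t=z$. Hence $\ell(a,z)=t\ell(a,b)$ and $\ell(z,b)=(1-t)\ell(a,b)$. Take the product coupling $\mu_0\otimes\mu_t$, which is admissible, and bound it from below:
\[\ell_q^q(\mu_0,\mu_t)\geq\int\ell^q(x,z)\,d\mu_0(x)d\mu_t(z).\]
For each pair $(x,z)$, choose $a,b$ as above. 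The reverse triangle inequality gives $\ell(x,z)$ only one-sidedly, so I will use continuity of $\ell_+$ instead. Since $d(x,a)<\delta$, we get $\ell_+^q(x,z)\geq\ell_+^q(a,z)-\epsilon=t^q\ell^q(a,b)-\epsilon$. Also $\ell^q(a,b)\geq\ell_q^q(\mu_0,\mu_1)-\epsilon$, because every point pair in $\spt\mu_0\times\spt\mu_1$ lies within $\epsilon$ of the integral for any coupling. Combining these gives the lower bound $t^q\ell_q^q(\mu_0,\mu_1)-(1+t^q)\epsilon$, and the same argument applies to $(\mu_t,\mu_1)$.

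The one subtle point, which I expect to be the main obstacle, is replacing $\ell^q$ by $\ell_+^q$ inside $\ell_q$. If $\ell(x,z)<0$ for some pair, the integral becomes $-\infty$ under the paper's convention. I would handle this by noting that $x\leq a\leq z$ fails in general. However, since $\ell_+(x,z)\geq t^q\ell^q(a,b)-\epsilon>0$ once $\epsilon$ is small relative to $\inf_{\spt\mu_0\times\spt\mu_1}\ell$, we get $x\ll z$ whenever $\ell_+(x,z)>0$. The difficulty is that $\delta$ must not depend on the pair $(\mu_0,\mu_1)$. To resolve this, I will split into two cases, using that $\ell_q^q(\mu_0,\mu_1)\leq\sup_{\spt\mu_0\times\spt\mu_1}\ell^q\leq\ell^q(a,b)+\epsilon$.

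In the first case, $\ell^q(a,b)\leq 2\epsilon$ for such points. Then the right-hand side $t^q\ell_q^q(\mu_0,\mu_1)-3\epsilon$ is negative. The left-hand side is nonnegative, because total timelikeness and the geodesic through $z$ give causal relations which I expect yield an admissible causal coupling, for instance by pushing forward along a measurable selection of the geodesics. In the second case, $\ell^q(a,b)>2\epsilon$, and the product coupling is concentrated on $X_\ll^2$, so the estimate above is legitimate. Finally I rescale $\epsilon$ to absorb the constants.
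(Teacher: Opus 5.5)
Your core estimate follows the paper's argument. You bound $\ell_q^q(\mu_0,\mu_t)$ from below by the product coupling $\mu_0\otimes\mu_t$, move $x$ to the initial point $a$ of a geodesic through $z$ by uniform continuity of $\ell_+^q$, use $\ell(a,z)=t\ell(a,b)$, and compare with $\ell_q^q(\mu_0,\mu_1)$ using the small diameters. The paper integrates against an optimal coupling instead, and your choice of $a$ depending on $z$ is more accurate than the paper's ``fixed $x'$''. You also correctly isolate the delicate point, which the paper passes over by simply writing $\int\ell^q\,d(\mu_0\otimes\mu_t)=\int\ell_+^q\,d(\mu_0\otimes\mu_t)$.

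\textbf{Your first case is a genuine gap.} A measurable selection $z\mapsto a(z)$ gives a causal coupling of $a_\#\mu_t$ with $\mu_t$, not of $\mu_0$ with $\mu_t$. Nothing forces $\spt\mu_0\subset J^-(z)$. In fact no argument can close this case, because the statement as written is false. Here is a counterexample.
\begin{itemize}
\item Let $X$ be a large compact causal diamond in two-dimensional Minkowski space with the Euclidean metrizer. Fix $t\in(0,1)$, and for a given $\delta>0$ choose $0<s<\delta$ and $T\in(s,s/t)$.
\item Let $\mu_0,\mu_1$ be normalized length measure on $\{0\}\times[-s/2,s/2]$ and $\{T\}\times[-s/2,s/2]$. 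Since $T>s$ the pair is totally timelike, and both supports have diameter $s<\delta$.
\item The point $z=(tT,-s/2)$ lies on the geodesic from $(0,-s/2)$ to $(T,-s/2)$, so $\mu_t:=\delta_z$ is admissible.
\item The points $(0,x^1)$ with $x^1>tT-s/2$ lie outside $J^-(z)$ and have positive $\mu_0$-measure, since $tT<s$. As $\mu_0\otimes\delta_z$ is the only coupling, $\ell_q(\mu_0,\mu_t)=-\infty$.
\end{itemize}
Taking $s$ small puts this squarely in your first case.

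Your second case also has a slip in its threshold. $\ell^q(a,b)>2\epsilon$ does not make $t^q\ell^q(a,b)-\epsilon$ positive unless $t^q>1/2$. The split has to be made on $t^q\ell^q(a,b)$ (resp.\ $(1-t)^q\ell^q(a,b)$).

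\textbf{What is true, and what your argument proves without any case split.} Let $\delta$ depend also on $t\in(0,1)$ and on a constant $c>0$ with $\ell\geq c$ on $\spt\mu_0\times\spt\mu_1$. Every totally timelike pair has such a $c$, but the statement demands $\delta$ uniform over all pairs, and that is exactly what fails.
\begin{itemize}
\item Shrink $\delta$ so that $d(x,x')<\delta$ implies $|\ell_+(x,z)-\ell_+(x',z)|<\min\{t,1-t\}\,c/2$ for all $z$, and likewise in the second slot.
\item For $x\in\spt\mu_0$, $z\in\spt\mu_t$ and $a,b$ as in your argument, this gives $\ell_+(x,z)>t\ell(a,b)-tc/2\geq tc/2>0$.
\item Hence $\mu_0\otimes\mu_t$ is concentrated on $X_\ll^2$, and your second-case computation applies verbatim. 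The pair $(\mu_t,\mu_1)$ is handled symmetrically.
\end{itemize}
This weaker lemma is all the paper uses. In \Cref{TBMtoTMCP} and \Cref{sTBMtoTCDe}, $t$ is fixed. Every piece lies in $\spt\mu\times\{x_0\}$, respectively $\spt\mu_0\times\spt\mu_1$, which is a compact subset of $X_\ll^2$ on which $\ell$ has a positive minimum.
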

\begin{proof}
Let $d$ be a metrizer of $X$ and $\epsilon>0$. Let $(\mu_0,\mu_1)\in\Pcal(X)^2$ be $q$-timelike dualizable and let $\mu_t\in\Pcal(X)$ be such that $\spt\mu_t\subset G_t(\spt\mu_0,\spt\mu_1)$. By compactness of $X$, the family of maps $\Big\{x\mapsto \ell_+(x,y)\Big\}_{y\in X}$ is $d$-uniformly equicontinuous. Therefore, given $\epsilon>0$ there exists $\delta_0>0$ such that for all $x,x',y\in X$, $d(x,x')<\delta_0\implies |\ell_+^q(x, y)-\ell_+^q(x',y)|<\epsilon/2$. 
A similar argument gives the existence of a $\delta'>0$ such that for all $(x,y),(x',y')\in X^2$, 
\[\max\{d(x,x'),d(y,y')\}<\delta'\implies |\ell_+^q(x,y)-\ell_+^q(x'y,')|<\epsilon/2\]

 Suppose $\diam_d(\spt\mu_0)<\min\{\delta_0,\delta'\}$, and $\diam_d(\spt\mu_1)<\delta'$. For any fixed $x'\in \spt\mu_0$, and each $y\in \spt\mu_t\subset G_t(\spt\mu_0, \spt\mu_1)$ there is $z(y)\in \spt\mu_1$ such that $\ell(x', y)=t\ell(x', z(y))$. Observe then that for any $(x',z')\in \spt\mu_0\times \spt\mu_1$: 
\begin{align*}
\ell_q^q(\mu_0, \mu_t) &\geq \int\ell^q(x,y)d\mu_0(x)d\mu_t(y) = \int \ell_+^q(x,y)d\mu_0(x)d\mu_t(y)\geq \int \left(\ell_+^q(x',y)-\frac{\epsilon}{4}\right) d \mu_0(x)d\mu_t(y)\\
&= \int \ell_+^q(x',y)d\mu_t(y)+\frac\epsilon4 =  t^q\int \ell_+^q(x',z(y))d\mu_t(y)-\frac\epsilon2\geq t^q\int \ell_+^q(x',z')d\mu_t(y)-\frac\epsilon2 \\
&=  t^q\int \left(\ell_+^q(x',z')-\frac\epsilon2\right)d\mu_t(y)-\frac\epsilon2= t^q\ell_+^q(x',z')-\epsilon.
\end{align*}

As $(\mu_0,\mu_1)$ are totally timelike, there is a timelike optimal coupling $\pi\in \TOpt_q(\mu_0, \mu_1)$. Integrating the above inequality with respect to $\pi$ yields: 
\[\ell_q^q(\mu_0, \mu_t) = \int \ell_q^q(\mu_0, \mu_1)d\pi(x',z')\geq t^q\int \ell_+^q(x',z')d\pi(x',z')-\epsilon = t^q\int \ell^q(x',z')d\pi(x',z')-\epsilon=t^q\ell_q^q(\mu_0, \mu_1)-\epsilon. \]

 An exactly symmetric argument gives the existence of a $\delta_1>0$ such that if $\diam_d(\spt\mu_1)<\min\{\delta',\delta_1\}$ and $\diam_d(\spt\mu_0)<\delta'$, then 
\[\ell_q^q( \mu_t,\mu_{1})\geq (1-t)^q\ell_q^q( \mu_0,\mu_1)-\epsilon.\]

Then taking $\delta=\min\{\delta_0,\delta',\delta_1\}$ gives the result. 
\end{proof} 

\section{Main Results}

\subsection{Curvature-Dimension Spacetimes are Strong Brunn-Minkowski Spacetimes}
In this section we prove that timelike curvature dimension spaces admit a strong Brunn-Minkowski inequality. 
\begin{theorem} Let $N>1$, $K\in\R$, and $q\in (0,1)$. If $X\in \TCD_q(K,N)$, then $X\in \sTBM_q(K,N')$ for all $N'>N$, and if  $X\in \TCD_q^e(K,N)$, then $X\in \sTBM_q^*(K,N)$. 
\end{theorem}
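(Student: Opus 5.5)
The plan is to apply the curvature conditions to the uniform measures $\mu_0:=\m_A$ and $\mu_1:=\m_B$. Then convert the resulting entropy inequalities into volume inequalities for $\spt(e_t)_\#\eta$ by Jensen's inequality. Let $(A,B)$ be a compact $q$-timelike pair. Then $(\m_A,\m_B)\in\Pcal_c^{ac}(X)^2$ is $q$-timelike, so $\TCD_q(K,N)$ (resp.\ $\TCD_q^e(K,N)$) provides a timelike $q$-optimal plan $\eta$ satisfying the corresponding inequality. Write $\mu_t:=(e_t)_\#\eta$ and $E_t:=\spt\mu_t$. The densities are $\rho_0=\m(A)^{-1}\chi_A$ and $\rho_1=\m(B)^{-1}\chi_B$, so $\rho_0(\gamma_0)^{-1/N'}=\m(A)^{1/N'}$ and $\rho_1(\gamma_1)^{-1/N'}=\m(B)^{1/N'}$ for $\eta$-a.e.\ $\gamma$.

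\textbf{The $\TCD_q$ case.} The right-hand side of the $\TCD_q$ inequality becomes
\[-\int \Big(\tau^{(1-t)}_{K,N'}(\ell(\gamma_0,\gamma_1))\m^{1/N'}(A)+\tau^{(t)}_{K,N'}(\ell(\gamma_0,\gamma_1))\m^{1/N'}(B)\Big)d\eta.\]
Since $(\gamma_0,\gamma_1)\in A\times B$ for $\eta$-a.e.\ $\gamma$, and $K\mapsto\tau_{K,N'}^{(t)}(\theta)$ is non-decreasing, we would like the bound $\tau^{(t)}_{K,N'}(\ell(\gamma_0,\gamma_1))\geq\tau^{(t)}_{K,N'}(\Theta)$. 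What is actually needed is monotonicity of $\theta\mapsto\tau^{(t)}_{K,N'}(\theta)$. By property 1 of the distortion coefficients, $\sigma^{(t)}_k(\theta)=\sigma^{(t)}_{k\theta^2}(1)$, and by property 3 this is non-decreasing in $k\theta^2$. Hence $\tau^{(t)}_{K,N'}(\theta)$ is non-decreasing in $\theta$ when $K\geq 0$ and non-increasing when $K<0$. This is exactly why $\Theta$ is taken to be the infimum for $K\geq0$ and the supremum for $K<0$. We therefore obtain
\[-S_{N'}(\mu_t)\geq \tau^{(1-t)}_{K,N'}(\Theta)\m^{1/N'}(A)+\tau^{(t)}_{K,N'}(\Theta)\m^{1/N'}(B).\]
To conclude, we need $-S_{N'}(\mu_t)\leq \m^{1/N'}(E_t)$. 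If $\mu_t$ is not absolutely continuous, only its absolutely continuous part enters $S_{N'}$, and the bound only becomes easier. Otherwise, Jensen's (or H\"older's) inequality applied to the concave function $r\mapsto r^{1-1/N'}$ on $E_t$ gives
\[\int_{E_t}\rho_t^{1-1/N'}d\m\leq \m(E_t)^{1/N'}\Big(\int\rho_t\, d\m\Big)^{1-1/N'}\leq \m(E_t)^{1/N'}.\]
This yields $\sTBM_q(K,N')$ for every $N'>N$.

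\textbf{The entropic case.} Here $U_N(\m_A)=\m^{1/N}(A)$ and $U_N(\m_B)=\m^{1/N}(B)$. Jensen's inequality for the convex function $r\log r$ against $\m_{E_t}$ gives $\Ent(\mu_t)\geq -\log\m(E_t)$, and hence $U_N(\mu_t)\leq\m^{1/N}(E_t)$. If $\mu_t$ is singular, then $U_N(\mu_t)=0$; in that case the $\TCD^e$ inequality would force its right-hand side to be nonpositive, and the conclusion is trivial unless the coefficients are positive. We then need $\sigma^{(t)}_{K/N}(\Lambda)\geq\sigma^{(t)}_{K/N}(\Theta)$, where $\Lambda=\|\ell(\gamma_0,\gamma_1)\|_{L^2(\eta)}$. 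Since $\ell(\gamma_0,\gamma_1)\in[\inf_{A\times B}\ell,\sup_{A\times B}\ell]$ $\eta$-a.e., $\Lambda$ lies in the same interval. The same monotonicity of $\theta\mapsto\sigma^{(t)}_{K/N}(\theta)$, with direction given by the sign of $K$, then provides the comparison with $\Theta$.

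\textbf{Main obstacle.} I expect the main technical point to be the bookkeeping behind these comparisons. One part is the monotonicity in $\theta$, and the infinite values of $\tau$ and $\sigma$ when $K\theta^2\geq\pi^2$; there the convention makes the right-hand side $+\infty$. In that case I would argue that the $\TCD$ inequality cannot hold with $+\infty$ coefficients unless such configurations do not occur, so the statement remains consistent. The other part is justifying Jensen's inequality on $E_t$, which has finite measure because it is compact.
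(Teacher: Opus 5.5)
Your proposal is correct and is essentially the paper's proof. You apply the curvature condition to $(\m_A,\m_B)$, use Jensen to get $-S_{N'}(\mu_t)\le \m^{1/N'}(\spt\mu_t)$ (resp.\ $U_N(\mu_t)\le\m^{1/N}(\spt\mu_t)$), and compare the distortion coefficients with those at $\Theta$ by monotonicity in $\theta$. The only deviation is in the $\TCD_q$ case: you bound $\tau^{(t)}_{K,N'}(\ell(\gamma_0,\gamma_1))\ge\tau^{(t)}_{K,N'}(\Theta)$ pointwise, whereas the paper first passes to $\tau^{(t)}_{K,N'}(\Lambda)$ using Jensen and the log-convexity of $\theta\mapsto\tau^{(t)}_{K,N'}(\sqrt\theta)$. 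Your shortcut is valid, and your explicit derivation of the direction of monotonicity (non-decreasing for $K\ge0$, non-increasing for $K<0$, matching the choice of $\Theta$) fills in a point the paper leaves implicit.
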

\begin{proof}
Suppose $X\in \TCD_q(K,N)$. Let $(A,B)$ be a pair of compact $q$-timelike sets. Then there exists a geodesic $\mu_t$, and a timelike $q$-optimal plan $\pi\in \TOpt_q(\m_A,\m_B)$, such that for all $N'>N$, 
\[S_{N'}(\mu_t) \leq -\int \tau_{K,N'}^{(1-t)}(\ell(x,y))\m^{1/N'}(A)+ \tau_{K,N'}^{(t)}(\ell(x,y))\m^{1/N'}(B)d\pi(x,y) \]

Let $\Lambda= \left(\int \ell^2d\pi\right)^{1/2}$. By the convexity of $\theta\mapsto \tau_{K,N'}^{(t)}(\sqrt{\theta})$, we have, by Jensen's inequality, that 
\[\int \tau_{K,N'}^{(1-t)}(\ell(x,y))\m^{1/N'}(A)d\pi(x,y) \geq  \m^{1/N'}(A) \tau_{K,N'}^{(1-t)}\left(\int \ell^2(x,y) d\pi(x,y)\right)^{1/2} = \tau_{K,N'}^{(1-t)}(\Lambda)\m^{1/N'}(A). \]

Similarly, we have 
\[\int \tau_{K,N'}^{(t)}(\ell(x,y))\m^{1/N'}(B)d\pi(x,y) \geq   \tau_{K,N'}^{(t)}(\Lambda)\m^{1/N'}(B).\]

Jensen's inequality also gives that $-\m^{1/N'}(\spt\mu_t) \leq S_{N'}(\mu_t)$. Combining these inequalities gives 
\[\m^{1/N'}(\spt\mu_t) \geq \tau_{K,N'}^{(1-t)}(\Lambda)\m^{1/N'}(A)+ \tau_{K,N'}^{(t)}(\Lambda)\m^{1/N'}(B),\]

Monotonicity of $\theta\mapsto \tau_{K,N}^{(t)}(\theta)$ then gives 
\[\m^{1/N'}(\spt\mu_t) \geq \tau_{K,N'}^{(1-t)}(\Theta)\m^{1/N'}(A)+ \tau_{K,N'}^{(t)}(\Theta)\m^{1/N'}(B),\]
as desired.

If $X\in \TCD^e(K,N)$, then, 
\[U_{N}(\mu_t)\geq \sigma_{K/N}^{(1-t)}(\Lambda)\m^{1/N}U_{N}(\m_A)+\sigma_{K/N}^{(t)}(\Lambda)\m^{1/N}U_{N}(\m_B) \]
Using the fact that $U_{N'}(\m_A)= \m^{1/N'}(A)$, $U_{N'}(\m_B)= \m^{1/N'}(B)$, and $U_{N'}(\mu_t)\leq \m^{1/N'}(\spt\mu_t)$, and monotonicity of $\theta\mapsto\sigma_k^{(t)}(\theta)$, we get the desired result. 
\end{proof}

\subsection{Brunn-Minkowski Spacetimes are Measure Contracting Spacetimes} 

In this section we prove that $\TBM^*$ spaces are $\TMCP^e$. This step is needed in order to obtain unique optimal maps, which are necessary to prove all of the following results. Most of the arguments presented here will be recapitulated in the following section.

The following is a well-known lemma which allows us to reduce the $\TMCP$ condition to entropic convexity along $t$-midpoints, rather than displacement geodesics. The proof can be found within \cite[Thm.\ 5.7]{Octet}. We rewrite it explicitly for the sake of completeness. Later, we will need a similar version of this result for the $\TCD_q^e(K,N)$ condition (\Cref{midpointTCDe}), but the careful reader can see that the proof would be essentially the same, so we omit it. A similar proof of this result for $\TCD_q(K,N)$ spaces can be found within \cite[Thm.\ 3.33]{Braun}. 

\begin{lemma}[$\TMCP$ on $t$-midpoints]\label{midpointTMCP}
Let $X$ be a measured globally hyperbolic synthetic spacetime. Suppose there exists $K\in\R$ and $N>1$ such that, for every $\mu=\rho\m\in \Pcal_{c}^{ac}(X)$, every $x_0\in X$ such that $\spt\mu_0\subset I^-(x_0)$, and every $t\in (0,1)$, there exists a $t$-midpoint $\mu_{t}$ between $\mu$ and $\delta_{x_0}$ such that 
\begin{align}\label{midpointtmcp}
U_N(\mu_{t})\geq \sigma_{K/N}^{(1-t)}(\Lambda)U_N(\mu),
\end{align}
where $\Lambda:=\sqrt{\int \ell^2(x,x_0)d\mu(x)}$. Then $X\in \TMCP^e(K,N)$. 
\end{lemma}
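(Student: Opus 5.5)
The plan is to build, from the midpoint hypothesis, a whole displacement geodesic by iterated dyadic bisection, and then pass to the limit using lower semicontinuity of $\Ent$ and compactness.

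Fix $\mu=\rho\m\in\Pcal_c^{ac}(X)$ and $x_0$ with $\spt\mu\subset I^-(x_0)$. Set $Y:=J(\spt\mu,x_0)$. It is compact by global hyperbolicity, so \Cref{spacetimestructure} and \Cref{optimalplanprop} apply in $\Pcal(Y)$.

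\emph{Dyadic construction.} Set $\mu_0:=\mu$ and $\mu_1:=\delta_{x_0}$. Apply the hypothesis with $t=1/2$ to get $\mu_{1/2}$. The pair $(\mu_{1/2},\delta_{x_0})$ is again of the required form: $\mu_{1/2}$ is concentrated on $t$-intermediate points of geodesics ending at $x_0$, so its support lies in $I^-(x_0)$. It is also absolutely continuous, since otherwise $U_N(\mu_{1/2})=0$ and \eqref{midpointtmcp} fails.

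A subtlety is that the midpoints between $\mu_0$ and $\mu_{1/2}$ do not involve a Dirac mass. I therefore apply the hypothesis only toward $\delta_{x_0}$, with general $t$. For every dyadic $t=k/2^m$, let $\mu_t$ be a $t$-midpoint between $\mu$ and $\delta_{x_0}$ satisfying \eqref{midpointtmcp}.

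To make these a consistent geodesic, use that the coupling into a Dirac mass is unique. The only coupling is $\mu\otimes\delta_{x_0}$, so $\ell_q^q(\mu,\delta_{x_0})=\int\ell^q(x,x_0)\,d\mu$. Any $t$-midpoint $\mu_t$ then satisfies $\ell_q(\mu_t,\delta_{x_0})=(1-t)\ell_q(\mu,\delta_{x_0})$ and $\ell_q(\mu,\mu_t)=t\,\ell_q(\mu,\delta_{x_0})$. Gluing a geodesic from $\mu$ to $\mu_t$ with one from $\mu_t$ to $\delta_{x_0}$, as in the proof of \Cref{singularmidpoint}, gives a geodesic path. By \Cref{geodesicpathtocurve} and \Cref{optimalplanprop} it is induced by a plan $\eta^{(t)}\in\TOptGeo_q(\mu,\delta_{x_0})$ with $(e_t)_\#\eta^{(t)}=\mu_t$.

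\emph{Passing to a single plan.} Enumerate the dyadics $t_1,t_2,\dots$. For each $m$, I seek one plan $\eta^m$ satisfying the inequality simultaneously at $t_1,\dots,t_m$. The first attempt is to iterate: take the midpoint at $t_1$, then apply the hypothesis to the pair $(\mu_{t_1},\delta_{x_0})$ to refine the portion after $t_1$, and concatenate.

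The coefficients must then compose correctly:
\[
\sigma^{(1-s)}_{K/N}\bigl((1-t_1)\Lambda\bigr)\,\sigma^{(1-t_1)}_{K/N}(\Lambda)=\sigma^{(1-t')}_{K/N}(\Lambda),\qquad t'=t_1+s(1-t_1).
\]
This holds by the quotient form $\sigma_k^{(t)}(\theta)=\sin_k(t\theta)/\sin_k(\theta)$ together with property 1 of the distortion coefficients. The $\Lambda$ for the pair $(\mu_{t_1},\delta_{x_0})$ equals $(1-t_1)\Lambda$, because geodesics into $x_0$ have age scaled by $1-t_1$ after $t_1$. Points before $t_1$ are handled by reversing roles; alternatively, order the construction so that times are refined from $0$ outward toward $x_0$, for instance using $t\mapsto 1-2^{-m}$ and subdividing.

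Tightness on $\Geo(Y)$ (compact $Y$) then yields a narrow limit $\eta\in\OptGeo_q$. It is timelike by \Cref{Stableopt}-type stability, since $\ell_+$ is continuous on the relevant compact set and $\spt\mu\times\{x_0\}\subset X_\ll^2$.

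\emph{Conclusion.} Upper semicontinuity of $U_N$ under narrow convergence, i.e.\ lower semicontinuity of $\Ent$ on compact $Y$, gives the inequality at every dyadic $t$ for $(e_t)_\#\eta$. For general $t$, approximate by dyadics: $t\mapsto (e_t)_\#\eta$ is narrowly continuous, $U_N$ is upper semicontinuous, and $\sigma$ is continuous in $t$.

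The main obstacle is the consistency step. I need a single plan realizing the midpoints at all dyadic times with the composed coefficients, while the hypothesis supplies midpoints only toward a Dirac mass. The rescaling identity for $\sigma$ and the iteration toward $x_0$ are the tools I would lean on there.
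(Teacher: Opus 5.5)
Your proposal is correct in substance and shares the paper's core mechanism. The hypothesis is only ever applied toward $\delta_{x_0}$, iterated from the newly produced midpoint. The coefficients chain through the identity $\sigma^{(1-s)}_{K/N}\bigl((1-t)\Lambda\bigr)\sigma^{(1-t)}_{K/N}(\Lambda)=\sigma^{(1-t-s(1-t))}_{K/N}(\Lambda)$ together with the rescaling of $\Lambda$ by $1-t$ along geodesics into $x_0$. The paper uses both facts silently in the claim $\mu^m_t\in\Ical_{1-(1-t)^{m-m'}}(\mu^{m'}_t,\delta_{x_0})$. You justify them, and also the absolute continuity and support condition needed to re-apply the hypothesis.

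Where you differ is the limiting procedure.
\begin{itemize}
\item The paper iterates a fixed small step $t_n$ and reaches a dense set of times only approximately, through $1-(1-t_n)^{m_{n,s}}\to s$. It extracts narrow limits of measures in the compact space $\Pcal(Y)$ by diagonalization. It must then verify a posteriori, via upper semicontinuity of $\ell_q$, that the limit family is a geodesic, before \Cref{geodesicpathtocurve} (and implicitly \Cref{optimalplanprop}) produce a plan.
\item You sort finitely many dyadic times, choose variable steps to hit them exactly, glue everything into one plan $\eta^m$, and pass to the limit of plans. This gives exact coefficients at each time and hands you the plan directly.
\end{itemize}

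The price of your route is a compactness statement the paper never needs. Tightness of $\{\eta^m\}$ in $\Pcal(C([0,1],Y))$ does not follow from compactness of $Y$ alone. It requires the geodesics from $\spt\mu$ to $x_0$ to form a compact set in $C([0,1],Y)$, which needs a limit-curve argument:
\begin{itemize}
\item continuity of $\ell_+$, so that limits of geodesics remain geodesics;
\item closedness and antisymmetry of $\leq$, to upgrade pointwise convergence to uniform convergence.
\end{itemize}
You can avoid this entirely by taking narrow limits of $(e_t)_\#\eta^m$ at the dyadic times in $\Pcal(Y)$ and finishing as the paper does.

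Two smaller points. Discard the ``reversing roles'' option: the hypothesis never supplies midpoints between $\mu$ and $\mu_{t_1}$, so only the ordered iteration toward $x_0$ works. Also treat separately the trivial case $\Ent(\mu)=+\infty$. There $U_N(\mu)=0$, the intermediate midpoints need not be absolutely continuous, and any timelike optimal plan suffices.
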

\begin{proof}
The aim is to show that we can construct a geodesic joining $\mu$ to $\delta_{x_0}$ which satisfies the entropic convexity.  

Fix $\mu\in \Pcal^{ac}_c(X)$ and $x_0$ such that $\spt\mu\subset I^-(x_0)$. Let $\Ical_t(\mu, \delta_{x_0})$ denote the set of $t$-midpoints from $\mu$ to $\delta_{x_0}$ which satisfy \eqref{midpointtmcp}. Let $Y=J(\spt\mu, x_0)$, which is a compact synthetic spacetime by global hyperbolicity and \Cref{spacetimestructure}. Notice that if $\nu\in \Pcal(Y)$, then $\Ical_t(\nu,\delta_{x_0})\subset \Pcal(Y)$. 

For each $t\in (0,1)$, iteratively define the collection of measures $\{\mu^m_t\}_{m=1}^\infty$ such that $\mu^1_t\in \Ical_t(\mu, \delta_{x_0})$, and $\mu^m_t\in \Ical_t(\mu_{t}^{m-1}, \delta_{x_0})$. Notice then that $\mu^m_t\in \Ical_{1-(1-t)^{m-m'}}(\mu^{m'}_t, \delta_{x_0})$, for any $m>m'$. Letting $\{t_n\}_{n\in \N}$ be a sequence decreasing to $0$, we arrive at a countable collection of measures $\{\mu^m_n:= \mu^m_{t_n}\}_{n,m\in \N}$. 

Compactness of $Y$ implies narrow compactness of $\Pcal(Y)$ and thus $\{\mu^m_n\}_{n,m}$ is narrowly precompact. Let $D\subset (0,1)$ denote a countable dense subset of $(0,1)$. For each $s\in D$, there is a sequence $\{m_{n,s}\}_{n\in\N}\subset \N$ such that $1-(1-t_n)^{m_{n,s}}\to s$ as $n\to \infty$. Narrow compactness of $\Pcal(Y)$ together with a standard diagonalization argument shows that the countable collection $\{\mu^n_s:=\mu^{m_{n,s}}_n\}_{n\in\N}$ admits a subsequential limit $\mu_{s}$ for all $s\in D$. Notice that for each $s\in D$, $\mu_s$ satisfies \eqref{midpointtmcp} (with $t=s$), by narrow lower semicontinuity of $\Ent$ and continuity of the right hand side in $t$. 

As $\Pcal(Y)$ is compact we may complete $D\ni s\mapsto \mu_s$ to a causal path on $[0,1]$. It then remains to show that  $s\mapsto \mu_s$ is a geodesic. Indeed, for $s_1\leq s_2\in D$, we have, by upper-semicontinuity of $\ell_q$, 
\begin{align*}
\ell_q(\mu_{s_1},\mu_{s_2})&\geq \limsup_{n\to\infty} \ell_q(\mu^n_{s_1}, \mu^n_{s_2})= \limsup_{n\to\infty}1-(1-\lambda_n)^{m_{n,s_2}-m_{n,s_1}}\ell(\mu_s^m,\delta_{x_0})\\
&=\limsup_{n\to\infty}\Big(1-(1-\lambda_n)^{m_{n,s_2}-m_{n,s_1}}\Big)(1-\lambda_n)^{m_{n,s_1}}\ell_q(\mu, \delta_{x_0})\\
&=(s_2-s_1) \ell_q(\mu, \delta_{x_0}),
\end{align*}
which then proves by upper-semicontinuity that $\ell_q(\mu_{s_2},\mu_{s_1})\geq (s_2-s_1)\ell_q(\mu, \delta_{x_0})$ for all $s_1,s_2\in [0,1]$ with $s_1\leq s_2$. Thus $s\mapsto \mu_s$ is an $\ell_q$-path. Since $Y$ is regular, $\Pcal(Y)$ is regular by \Cref{spacetimestructure}, and so $s\mapsto \mu_s$ is a geodesic, by \Cref{geodesicpathtocurve}. 
\end{proof}

Next we obtain a timelike Brunn-Minkowski inequality from sets to points. The proof is a simple approximation argument. 
\begin{lemma}[$\TBM(K,N)$ for Dirac targets]\label{DiracTBM} Let $(X,\ell,\tau,\m)$ be a measured synthetic spacetime, with $\ell_+$ continuous. Suppose $X\in \TBM^*(K,N)$. Let $x_0\in X$ and let $A\subset X$ be a compact set of positive measure such that $A\subset I^-(x_0)$. Then
\[\m^{1/N}(G_t(A,x_0))\geq \sigma_{K/N}^{(1-t)}(\Theta(A,x_0))\m^{1/N}(A).\]
\end{lemma}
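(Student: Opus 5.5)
We approximate the Dirac target $\{x_0\}$ by small compact sets $B_\delta$ of positive measure near $x_0$, apply $\TBM^*(K,N)$ to the pairs $(A,B_\delta)$, and pass to the limit $\delta\to0$. Here $\TBM^*(K,N)$ is the reduced form of $\TBM$ with $\sigma_{K/N}$ in place of $\tau_{K,N}$.

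\textbf{Choice of $B_\delta$.} Since $A$ is compact and $A\subset I^-(x_0)$, continuity of $\ell_+$ gives $\inf_{x\in A}\ell(x,x_0)>0$. Continuity of $\ell_+$ also gives an open neighbourhood $U$ of $x_0$ such that $A\times \overline U\subset X_\ll^2$. Fix a metrizer $d$. For small $\delta>0$ we take $B_\delta:=\overline{B_d(x_0,\delta)}\cap \overline U$, shrinking it to a compact subset if $X$ is not proper, which is possible since $\m$ is Radon. The set $B_\delta$ has positive measure because $\spt\m=X$. For $\delta$ small the pair $(A,B_\delta)$ is a pair of compact totally timelike sets, so
\[\m^{1/N}(G_t(A,B_\delta))\geq \sigma^{(1-t)}_{K/N}(\Theta_\delta)\m^{1/N}(A)+\sigma^{(t)}_{K/N}(\Theta_\delta)\m^{1/N}(B_\delta)\geq \sigma^{(1-t)}_{K/N}(\Theta_\delta)\m^{1/N}(A),\]
where $\Theta_\delta:=\Theta(A,B_\delta)$. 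We drop the second term since it is nonnegative. In the case $K>0$, if the coefficient is $+\infty$, the statement is treated with the obvious convention.

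\textbf{Limit of the right-hand side.} By uniform continuity of $\ell_+$ on the compact set $A\times \overline U$, we get $\Theta_\delta\to\Theta(A,x_0)$: the infimum for $K\geq0$, the supremum for $K<0$. Continuity of $\theta\mapsto\sigma_{K/N}^{(1-t)}(\theta)$ on the region where it is finite then shows the right-hand side tends to $\sigma^{(1-t)}_{K/N}(\Theta(A,x_0))\m^{1/N}(A)$.

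\textbf{Limit of the left-hand side (the main obstacle).} We must show $\limsup_{\delta\to0}\m(G_t(A,B_\delta))\leq \m(G_t(A,x_0))$. The sets $G_t(A,B_\delta)$ decrease as $\delta$ decreases, so by continuity from above of the finite measure $\m$ restricted to the compact set $J(A,B_{\delta_0})$, it suffices to prove
\[\bigcap_\delta G_t(A,B_\delta)\subset G_t(A,x_0).\]
Take $z$ in the intersection. Then there are geodesics $\gamma^n$ with $\gamma^n_0\in A$, $\gamma^n_1\in B_{1/n}$, and $\gamma^n_t=z$. All of them lie in the compact emerald $J(A,B_{\delta_0})$, by global hyperbolicity. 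Passing to a subsequence, $\gamma^n_0\to a\in A$ and $\gamma^n_1\to x_0$.

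By continuity of $\ell_+$ we then have $\ell(a,z)=t\ell(a,x_0)$ and $\ell(z,x_0)=(1-t)\ell(a,x_0)$. Concatenating geodesics from $a$ to $z$ and from $z$ to $x_0$, which exist because $X$ is causally geodesic, gives an age-maximizing curve. By \Cref{agemaxtogeodesic} it reparametrizes to a geodesic from $a$ to $x_0$ passing through $z$ at time $t$, with $(a,x_0)\in X^2_\ll$. Hence $z\in G_t(A,x_0)$.

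This avoids needing Arzel\`a--Ascoli for geodesics. If global hyperbolicity is not assumed in the lemma, I would instead use compactness of $J(A,\overline U)$ as provided in the working setting of the paper. Measurability of $G_t$ can be handled via its being a continuous image of a compact set, or by working with outer measure.
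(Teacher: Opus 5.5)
Your proposal is correct and follows essentially the same route as the paper: approximate $x_0$ by a shrinking family of compact sets $B_n$ of positive measure, apply $\TBM^*(K,N)$ to $(A,B_n)$, identify $\bigcap_n G_t(A,B_n)$ with $G_t(A,x_0)$ using compactness of $A$ and continuity of $\ell_+$, and conclude by continuity from above of $\m$ together with $\Theta(A,B_n)\to\Theta(A,x_0)$. Your version is slightly more careful than the paper's, since you explicitly build the geodesic through $z$ by concatenation and justify the finiteness needed for continuity from above via global hyperbolicity. Two small fixes: in the non-proper case, choose the shrunken compact sets nested (e.g.\ $\{x_0\}\cup\bigcup_{m\geq n}K_m$). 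Also, $\overline U$ need not be compact, although $\Theta_\delta\to\Theta(A,x_0)$ still follows from compactness of $A$ alone.
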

\begin{proof}
Let $d$ be a metrizer of $X$. Then the collection $\{U_n:= B^d(x_0, 1/n)\}_{n\in \N}$ is a countable local base for $x_0$. Since $\m$ is Radon with full support, $\m(U_n)>0$, and there exists a compact set $K_n\subset U_n$ such that $\m(K_n)>0$. The set $B_n:=K_n\cup\{x_0\}$ is then compact, contains $x_0$, and is contained in $U_n$. It follows that $\{B_n\}$ is a compact local base for $x_0$ of positive $\m$-measure, and WLOG we may assume the sequence is nesting. 

As $\ell_+$ is continuous, $A$ is compact, and $(A,x_0)$ is $q$-timelike, the map $x\mapsto \inf_{y\in A}\ell_+(y,x)$ is well-defined, lower-semicontinuous and positive at $x=x_0$. Therefore there is a neighbourhood $U$ of $x_0$ where $\ell_+(y,x)>0$ for all $(y,x)\in A\times U$, so the pair $(A,U)$ is $q$-timelike. Therefore for all sufficiently large $n$, the pair $(A,B_n)$ is totally timelike. 

Let $A_t^n:=G_t(A,B_n)$, and $A_t:=G_t(A,x_0)$. From the $\TBM^*(K,N)$ property, we have 
\[\m^{1/N}(A_t^n)\geq \sigma_{K/N}^{(1-t)}(\Theta(A,B_n))\m^{1/N}(A)+\sigma_{K/N}^{(t)}(\Theta(A,B_n))\m^{1/N'}(B_n) \]

As the $B_n$ are nested, the $A_t^n$ are nested, and $A_t\subset \bigcap_{n=1}^\infty A_t^n$. We claim that $A_t:= \bigcap_{n=1}^\infty A_t^n$. Indeed, let $z\in \bigcap_{n=1}^\infty A_t^n$. Then for each $n\in \N$, there is $x_n\in A$ and $y_n\in B_n$ such that $z$ is a $t$-midpoint between $x_n$ and $y_n$. By construction it is clear that $y_n\to x_0$. Compactness of $A$ implies that a subsequence of $x_n$ converges to some point $x\in A$. Passing to subsequences, we see that $\ell_+(x_n,z)\to \ell_+(x,z)$, whereas $\ell_+(x_n,y_n)\to \ell(x,x_0)$. It follows that $\ell_+(x,z)=t\ell_+(x,x_0)$ and similarly $\ell(z,x_0)=(1-t)\ell_+(x,x_0)$. Therefore $z$ is a $t$-midpoint between $x\in A$ and $x_0$, so $z\in A_t$. This proves the claim. 

Moreover continuity of $\ell_+$ over the compact sets $A\times B_1$ implies that for each $\epsilon>0$, there is a $\delta>0$ such that for all $x_1,x_2\in X$, 
\[d(x_1,x_2)<\delta\implies |\ell_+(y,x_1)-\ell+(y,x_2)|<\epsilon.\]
It follows that there is $n_0\in \N$ such that for all $n>n_0$ and all $x_1,x_2\in B_n$, we have 
\[|\ell_+(y,x_1)-\ell_+(y,x_2)|<\epsilon\]
and so we have for $K\geq 0$
\[\liminf_{n\to\infty} \Theta(A,B_n)\geq \Theta(A,x_0),\]
and for $K<0$, 
\[\limsup_{n\to\infty} \Theta(A,B_n)\leq \Theta(A,x_0).\]
In either case, continuity of $\theta\mapsto \sigma_{k}^{(t)}(\theta)$ uniformly in $t$ then gives
\[\liminf_{n\to\infty} \sigma_{K/N}^{(1-t)}(\Theta(A,B_n))\geq \sigma_{K/N}^{(1-t)}(\Theta(A,x_0)).\]

Finally the fact that $A_t=\bigcap_{n=1}^\infty A_t^n$ and continuity of $\m$ from above then gives,
\begin{align*}
\m^{1/N}(A_t)&=\liminf_{n\to\infty} \m^{1/N}(A_t^n) \geq \liminf_{n\to\infty}\Big( \sigma_{K/N}^{(1-t)}(\Theta(A,B_n))\m^{1/N}(A)+ \sigma_{K/N}^{(t)}(\Theta(A,B_n))\m^{1/N'}(B_n)\Big)\\
&\geq \sigma_{K/N}^{(1-t)}(\Theta(A,x_0))\m^{1/N}(A)+ \sigma_{K/N}^{(t)}(\Theta(A,x_0))\m^{1/N}(x_0)\\
&\geq \sigma_{K/N}^{(1-t)}(\Theta(A,x_0))\m^{1/N}(A),
\end{align*}
as desired.
\end{proof}

We now prove the main result of this section. 
\begin{theorem}[$\TBM^*(K,N)$ spaces are $\TMCP^e(K,N)$ spaces]\label{TBMtoTMCP} Fix $0<q<1$. Let $(X,\ell,\tau,\m)$ be a timelike $q$-essentially non-branching measured globally hyperbolic synthetic spacetime, with $\ell_+$ continuous. If $X\in \TBM^*(K,N)$, then $X\in \TMCP^e(K,N)$. 
\end{theorem}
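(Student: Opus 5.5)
By \Cref{midpointTMCP}, it suffices to produce, for each $\mu=\rho\m\in\Pcal_c^{ac}(X)$ with $\spt\mu\subset I^-(x_0)$ and each $t\in(0,1)$, a $t$-midpoint $\mu_t$ between $\mu$ and $\delta_{x_0}$ with $U_N(\mu_t)\geq\sigma^{(1-t)}_{K/N}(\Lambda)U_N(\mu)$. Throughout, all measures live in the compact space $Y:=J(\spt\mu,x_0)$, and $\Pcal(Y)$ is a compact synthetic spacetime by \Cref{spacetimestructure}.

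The first step treats simple measures. Take a simple sequence $\mu^n=\sum_j\lambda^n_j\m_{A^n_j}$ from \Cref{simplesequence}, with compact, disjoint $A^n_j\subset\spt\mu$ of $d$-diameter less than $\delta_n\to0$. For each piece, \Cref{DiracTBM} gives
\[\m(G_t(A^n_j,x_0))\geq \big(\sigma^{(1-t)}_{K/N}(\Theta(A^n_j,x_0))\big)^N\m(A^n_j).\]
Set $\mu^n_{t,j}:=\m_{G_t(A^n_j,x_0)}$ and $\mu^n_t:=\sum_j\lambda^n_j\mu^n_{t,j}$. Each $\mu^n_{t,j}$ is concentrated on $G_t(A^n_j,x_0)$ and $A^n_j$ has small diameter, so \Cref{midpointapproxlemma} (applied with $\mu_1=\delta_{x_0}$) shows that $\mu^n_{t,j}$ is an approximate $t$-midpoint, with errors that vanish uniformly as $n\to\infty$. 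Convexity of $\ell_q^q$ under mixtures, combined with the reverse triangle inequality, then shows that $\mu^n_t$ is an approximate $t$-midpoint between $\mu^n$ and $\delta_{x_0}$.

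The second step computes entropy. I want $U_N(\mu^n_t)\ge\sum_j\lambda_j^n(\ldots)$, and for this I need the pieces $\mu^n_{t,j}$ to be essentially mutually singular. This is where essential non-branching enters, through \Cref{singularmidpoint}. In its present form it needs exact $t$-midpoints, so I would first pass the whole construction to the limit. Alternatively, I would replace $G_t(A^n_j,x_0)$ by the supports of exact midpoints obtained from optimal plans, which are again contained in $G_t(A^n_j,x_0)$, and then apply \Cref{singularmidpoint}. Granting disjointness, Jensen on each piece gives
\[-\tfrac1N\Ent(\mu^n_t)\ \ge\ \sum_j\lambda^n_j\Big(\tfrac1N\log\m(G_t(A^n_j,x_0))-\tfrac1N\log\lambda^n_j\Big),\]
and inserting the bound from the first step yields
\[U_N(\mu^n_t)\ \ge\ \exp\Big(\sum_j\lambda^n_j\log\sigma^{(1-t)}_{K/N}(\Theta^n_j)\Big)U_N(\mu^n).\]
Since $\log\sigma^{(1-t)}_{K/N}(\sqrt{\cdot})$ is convex (the $\sigma$ analogue of \Cref{logconvexityoftau}), Jensen gives
\[\sum_j\lambda^n_j\log\sigma^{(1-t)}_{K/N}(\Theta^n_j)\ \ge\ \log\sigma^{(1-t)}_{K/N}(\Lambda_n),\qquad \Lambda_n^2:=\sum_j\lambda_j^n(\Theta^n_j)^2.\]
Because the diameters of the $A^n_j$ shrink and $\ell_+$ is continuous, $\Lambda_n\to\Lambda$.

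The third step passes to the limit. By narrow compactness of $\Pcal(Y)$, a subsequence $\mu^n_t\narrowto\mu_t$. Upper semicontinuity of $\ell_q$ together with the approximate midpoint estimates makes $\mu_t$ an exact $t$-midpoint between $\mu$ and $\delta_{x_0}$; this uses the reverse triangle inequality, and $\ell_q(\mu^n,\delta_{x_0})\to\ell_q(\mu,\delta_{x_0})$ because the coupling with $\delta_{x_0}$ is trivial. Narrow lower semicontinuity of $\Ent$ gives $U_N(\mu_t)\ge\limsup_n U_N(\mu^n_t)$, and \Cref{simplesequence} gives $U_N(\mu^n)\to U_N(\mu)$. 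Combining these yields the midpoint inequality, and \Cref{midpointTMCP} concludes.

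I expect the main obstacle to be the mutual singularity in the second step, because \Cref{singularmidpoint} requires exact midpoints. My first attempt would be a two-level construction. For fixed $n$, take exact $t$-midpoints $\nu^n_{t,j}$ between $\m_{A^n_j}$ and $\delta_{x_0}$ supported in $G_t(A^n_j,x_0)$. Then check that their mixture is an exact $t$-midpoint of the mixture, which should hold because the optimal coupling to a Dirac mass is unique. Then apply \Cref{singularmidpoint}, and bound $U_N(\nu^n_{t,j})$ from below via support size using the Brunn--Minkowski estimate. That last bound is the weak point, since a measure with large support can still have small $U_N$. If it fails, I would use uniform measures on the midpoint sets and argue disjointness of the sets $G_t(A^n_j,x_0)$ up to null sets directly from non-branching of the plans into $x_0$.
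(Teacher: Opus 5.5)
\textbf{Verdict: same approach as the paper, but with a genuine gap at the disjointness step, which you grant rather than prove.}

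Your architecture is the paper's: a simple sequence with small-diameter compact pieces, \Cref{DiracTBM} applied to each piece, uniform measures on $G_t(A^n_j,x_0)$, and \Cref{midpointapproxlemma} to get approximate midpoints. For gluing the pieces, what you need is superadditivity (joint concavity, not convexity) of $\ell_q^q$ under mixtures, i.e. $\ell_q^q(\sum_j\lambda_j\mu_j,\sum_j\lambda_j\nu_j)\geq\sum_j\lambda_j\ell_q^q(\mu_j,\nu_j)$. The remaining steps also match: the entropy of a mixture of disjoint uniform measures, log-convexity of $\sigma^{(1-t)}_{K/N}(\sqrt{\cdot})$, and a limit followed by \Cref{midpointTMCP}. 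The gap is the step you flag and then grant: the sets $G_t(A^n_j,x_0)$, $j=1,\dots,M_n$, must be mutually disjoint (at least up to $\m$-null sets). Without this, the identity $\Ent(\mu^n_t)=\sum_j\lambda^n_j\log\big(\lambda^n_j/\m(G_t(A^n_j,x_0))\big)$ fails. Your first route cannot close it, for the reason you suspect. Jensen gives $U_N(\nu)\leq \m^{1/N}(\spt\nu)$, so the Brunn--Minkowski bound on support size only bounds $U_N$ from above. You must therefore work with the uniform measures on the $G_t$-sets themselves, and prove those sets disjoint.

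The paper asserts this disjointness in one line from essential non-branching. The argument behind it is as follows. \Cref{nonbranchingdef} quantifies over \emph{all} $\mu_0,\mu_1\in\Pcal(X)$, not only absolutely continuous ones, so you may test it on atomic measures and get genuine pointwise disjointness.

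\begin{enumerate}
\item Fix $t\in(0,1)$ and suppose $z\in G_t(A_j,x_0)\cap G_t(A_k,x_0)$ with $j\neq k$. Then $z=\gamma_t=\gamma'_t$ for geodesics $\gamma$ from $a\in A_j$ and $\gamma'$ from $a'\in A_k$ to $x_0$. Since $A_j\cap A_k=\emptyset$, we have $a\neq a'$.
\item From $(1-t)\ell(a,x_0)=\ell(z,x_0)=(1-t)\ell(a',x_0)$, both geodesics have the same age $L>0$.
\item Let $\tilde\gamma$ be the concatenation of $\gamma|_{[0,t]}$ and $\gamma'|_{[t,1]}$. It satisfies $\ell(\tilde\gamma_s,\tilde\gamma_u)=(u-s)L$. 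The lower bound follows from the reverse triangle inequality through $z$. The upper bound follows from $\ell(a,x_0)\geq \ell(a,\tilde\gamma_s)+\ell(\tilde\gamma_s,\tilde\gamma_u)+\ell(\tilde\gamma_u,x_0)$.
\item Hence $\tilde\gamma$ is a timelike geodesic from $a$ to $x_0$ with $r^1_t\tilde\gamma=r^1_t\gamma'$ but $\tilde\gamma\neq\gamma'$.
\item The only coupling with a Dirac target is the product. So $\eta:=\tfrac12(\delta_{\tilde\gamma}+\delta_{\gamma'})$ is a timelike $q$-optimal plan from $\tfrac12(\delta_a+\delta_{a'})$ to $\delta_{x_0}$.
\item Every set on which $\eta$ is concentrated contains both atoms, so it is timelike branching. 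This contradicts the hypothesis, so $G_t(A^n_j,x_0)\cap G_t(A^n_k,x_0)=\emptyset$.
\end{enumerate}

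With this in place, the rest of your proof goes through exactly as in the paper. Under the more common definition of essential non-branching, where only absolutely continuous marginals are allowed, this test is unavailable because neither $\delta_{x_0}$ nor the atomic initial measure is absolutely continuous. Your fallback of arguing from ``non-branching of the plans into $x_0$'' would then need substantially more work.
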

\begin{proof}
Let $\mu\in\Pcal_c^{ac}(X)$, and $x_0\in X$ be such that $(\spt\mu,x_0)$ is timelike. In light of \Cref{midpointTMCP}, we aim to show that for each $t\in(0,1)$, there is a $t$-midpoint $\mu_t$ joining $\mu$ to $\delta_{x_0}$ such that 
\begin{align}\label{convexityinequality1}
U_N(\mu_t)\geq\sigma_{K/N}^{(1-t)}(\Lambda)U_N(\mu),
\end{align}
where $\Lambda:= \sqrt{\int \ell^2(x,x_0)d\mu(x)}$. Fix $t\in (0,1)$.

\paragraph{Step 1: Obtaining the $t$-midpoint $\mu_t$.}

From global hyperbolicity, the set $Y:=J(\spt\mu, x_0)$ is a compact synthetic spacetime. For each $n\in\N$, we apply \Cref{midpointapproxlemma} with the choice of metrizer $d$ of $Y$ and $\epsilon=1/n$ to obtain some $\delta_n>0$ such that for any pair $(\mu_0, \mu_1)\in \Pcal(Y)^2$ of compactly supported totally timelike measures, and any measure $\mu_t\in \Pcal(Y)$ with $\spt\mu_t\subset G_t(\mu_0, \mu_1)$,  if $\diam_d(\spt\mu_i)<\delta_n$ for $i=0,1$, then 
\[\ell_q(\mu_0, \mu_t) \geq t\ell_q(\mu_0, \mu_1)-\frac 1n; \quad \ell_q(\mu_t, \mu_1) \geq (1-t)\ell_q(\mu_0, \mu_1)-\frac1n.\]

WLOG we may assume $\delta_n\to 0$ as $n\to\infty$. By \Cref{simplesequence} there exists a simple sequence $\{\mu^n\}_{n\in\N}$ which converges to $\mu$. From the controlled decomposition property, we may write $\mu^n:=\sum_{j=1}^{M_n}\lambda_j^n \m_{A_j^n}\in \Pcal(Y)$, where for each $n$, the collection $\{A_j^n\}_{j=1}^{M_n}$ are compact, mutually disjoint, have finite positive $\m$-measure, $A_j^n\subset \spt\mu\subset Y$, and $\diam_d(A_j^n)<\delta_n$. 

Let $A_{t,j}^n:=G_t(A_j^n,x_0)\subset Y$. From \Cref{DiracTBM}, we know that $A_{t,j}^n$ has positive $\m$-measure. Since $A_{t,j}^n$ is compact and $\m$ is Radon, we also know that it has finite $\m$-measure. We define 
\[\mu_{t}^n:=\sum_{j=1}^{M_n}\lambda_j^n \m_{A_{t,j}^n}.\]

Observe that $\mu_t^n\in \Pcal(Y)$, and so $\mu_t^n$ admits a narrow subsequential limit $\mu_t$. Concavity of $\ell_q^q$ and \Cref{midpointapproxlemma} applied to the triple $(\mu_0,\mu_t,\mu_1):= (\m_{A_j^n}, \m_{A_{t,j}^n}, \delta_{x_0})$ then gives that
\begin{align*}
\ell_q^q(\mu^n,\mu_t^n) \geq \sum_{j=1}^{M_n}\lambda_j^n \ell_q^q(\m_{A_j^n}, \m_{A_{t,j}^n})
&\geq \sum_{j=1}^{M_n} \lambda_j^n t^q\ell_q^q(\m_{A_j^n}, \delta_{x_0})-\frac1n = t^q\sum_{j=1}^{M_n}\lambda_j^n \int\ell^q(x,x_0)d\m_{A_j^n}(x)-\frac 1n \\
& = t^q\int\ell^q(x,x_0) d\mu^n(x)-\frac 1n 
\end{align*}
Narrow convergence of $\mu^n\narrowto\mu$, continuity of $\ell_+^q$, and positivity of $\ell^q$ on $\spt\mu\times \{x_0\}$, and upper semicontinuity of $\ell_q$ then gives that, after passing to subsequence, that for all $\epsilon>0$,
\begin{align*}
\ell_q^q(\mu, \mu_t) \geq \limsup_{n\to\infty} \ell_q^q(\mu^n, \mu_t^n) = t^q\limsup_{n\to\infty}\int \ell^q(x,x_0) d\mu^n(x)-\frac 1n=t^q\int \ell^q(x,x_0)d\mu(x).
\end{align*}
Hence we may conclude that 
\[\ell_q(\mu,\mu_t)\geq t\ell_q(\mu,\delta_{x_0}).\]
A symmetric argument gives also that 
\[\ell_q(\mu_t,\delta_{x_0})\geq (1-t)\ell_q(\mu,\delta_{x_0}),\]
which proves that $\mu_t$ is indeed a $t$-midpoint between $\mu$ and $\delta_{x_0}$. \\

\paragraph{Step 2: Entropic Convexity along $\mu_t$.}

We now verify the entropic convexity inequality \eqref{convexityinequality1} along $\mu_t$. The $q$-essential non-branching assumption guarantees that for each $n$, the sets $A_{t,j}^n$ are mutually disjoint for all $j=1,\dots M_n$, since the $A_j^n$ are mutually disjoint. As a result, we have 

\begin{align*}
\Ent(\mu_t^n) &= \sum_{j=1}^{M_n}\Ent(\lambda_j^n\m_{A_{t,j}^n}) = \sum_{j=1}^{M_n} \lambda_j^n\log\frac{\lambda_j^n}{\m(A_{t,j}^n)}.
\end{align*}
Therefore we have 
\begin{align*}
U_N(\mu_t^n) &=\exp\left(-\frac1N\Ent(\mu_t^n)\right) =\prod_{j=1}^{M_n} \left(\frac{\m(A_{t,j}^n)}{\lambda_j^n}\right)^{\lambda_j^n/N} \geq \prod_{j=1}^{M_n}\left(\sigma_{K/N}^{(1-t)}(\Theta_j^n)\frac{\m^{1/N}(A_{0,j}^n)}{(\lambda_j^n)^{1/N}}\right)^{\lambda_j^n } \\
&= \prod_{j=1}^{M_n}\left(\sigma_{K/N}^{(1-t)}(\Theta_j^n)\right)^{\lambda_j^n} U_N(\mu^n),
\end{align*}
where $\Theta_j^n=\Theta(A_j^n,x_0)$. Now observe that log-convexity of $\theta\mapsto \sigma_{k}^{(t)}(\sqrt \theta)$ implies that 
\begin{align}\label{distortionestimate1}
\prod_{j=1}^{M_n} \sigma_{K/N}^{(1-t)}(\Theta_j^n)^{\lambda_j^n}\geq \sigma_{K/N}^{(1-t)}\left(\Big(\sum_{j=1}^{M_n} \lambda_j^n (\Theta_j^n)^2\Big)^{1/2}\right).
\end{align}
Continuity of $\ell$, compactness of $A_j^n$, and the fact that $\diam_d(A_j^n)<\delta_n$ gives that for each there exists $n_0\in\N$ such that for all $n>n_0$, and all $x_j^n\in A_j^n$, 
\[|\Theta_j^n-\ell(x_j^n,x_0)|<1/n.\]
It follows that, with $\Lambda^n:= \sqrt{\int \ell^2(x,x_0)d\mu^n}$, we have 
\[\left| \Big(\sum_{j=1}^{M_n} \lambda_j^n (\Theta_j^n)^2\Big)^{1/2}-\Lambda^n\right|<\frac 1n,\]
And so by monotonicity of $\theta\mapsto \sigma_{K/N}^{(t)}(\theta)$ we obtain 
\[U_N(\mu_t^n) \geq \sigma_{K/N}^{(1-t)}\left(\Lambda^n\mp \frac 1n\right)U_N(\mu^n), \]
where we choose $-\frac 1n$ if $K\geq 0$ and $+\frac 1n$ if $K<0$. Narrow convergence of $\mu^n$ to $\mu$ guarantees that $\Lambda^n\to \Lambda$, and the continuity along entropy guarantees that $U_N(\mu^n)\to U_n(\mu)$. Thus from upper semicontinuity we obtain
\[U_N(\mu_t)\geq \limsup_{n\to\infty} U_N(\mu_t^n) \geq \limsup_{n\to\infty}\sigma_{K/N}^{(1-t)}\left(\Lambda^n\mp \frac1n\right)U_N(\mu^n)= \sigma_{K/N}^{(1-t)}(\Lambda)U_N(\mu).\]
The last equality holds since $\sigma_{K/N}^{(1-t)}(\Lambda)>0$. 

\end{proof}

\subsection{Equivalence between $\sTBM_q^*(K,N)$ and $\TCD_q^e(K,N)$}
The main result of this section is to prove the equivalence between $\sTBM_q^*(K,N)$ and the $\TCD_q^e(K,N)$.  We hope to reproduce the main elements of the proof of \Cref{TBMtoTMCP}. Namely, we require the following elements: 
\begin{enumerate}
\item A sequence of pairs $(\mu_0^n,\mu_1^n)$ of simple measures which converges to the target pair $(\mu_0,\mu_1)$;
\item A sequence $(\mu_t^n)$ of simple measures that a) converges to a $t$-midpoint $\mu_t$ between $\mu_0$ and $\mu_1$, and b) attains the correct estimate in the entropy $\Ent$. 
\end{enumerate}

The following will help establish the correct sequence of pairs of simple measures, which we call a ``good simple sequence''.

\begin{prop}[Existence and uniqueness of optimal maps, \text{\cite[Thm.\ 5.12]{Octet}}]\label{optimalmaps}
Suppose $(X,\ell, \tau, \m)$ is a $q$-essentially non-branching measured globally hyperbolic synthetic spacetime, with $\ell_+$ continuous, and suppose $X\in \TMCP^e(K,N)$. Then $X$ for every pair $(\mu_0,\mu_1)\in \Pcal^{ac}(X)\times \Pcal(X)$ of $q$-separated measures (see \Cref{chronologynotions}), there exists a $\mu_0$-essentially unique Borel map $T:X\to X$ such that $\Opt_q(\mu_0,\mu_1)=\{(\id\times T)_\# \mu_0\}$. Such a map $T$ is called an {\it $q$-optimal map}.
\end{prop}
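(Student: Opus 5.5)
Since the proposition is quoted from \cite[Thm.\ 5.12]{Octet}, I would follow the classical Gigli / Cavalletti--Mondino scheme for existence and uniqueness of optimal maps, in the Lorentzian form. The whole argument is organised around one reduction: uniqueness of $q$-optimal couplings follows once we show that every $q$-optimal coupling between $\mu_0\ll\m$ and $\mu_1$ is induced by a map.

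\textbf{Step 0 (reduction to a compact space).} Replace $X$ by $Y:=J(\spt\mu_0,\spt\mu_1)$. This set is compact by global hyperbolicity, so by \Cref{spacetimestructure} and \Cref{optimalplanprop} every $q$-optimal coupling lifts to a $q$-optimal plan. By $q$-separation, all $q$-optimal couplings are concentrated on a common compact $\ell_q$-cyclically monotone set $\Gamma\subset X_\ll^2$. Consequently every $\pi\in\Opt_q(\mu_0,\mu_1)$ is timelike, and so is any restriction $\pi_f$ (this uses the proposition that restriction preserves optimality).

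\textbf{Step 1 (every optimal coupling is induced by a map).} Suppose, for a contradiction, that some $\pi\in\Opt_q(\mu_0,\mu_1)$ has disintegration $\pi_x$ that is not a Dirac mass on a set of positive $\mu_0$-measure.
\begin{enumerate}
\item By a measurable selection argument, find a compact $E$ with $\m(E)>0$ and two disjoint compact sets $B_1,B_2$ such that, for each $x\in E$, $\pi_x(B_1)>0$ and $\pi_x(B_2)>0$.
\item Restrict $\pi$ to $E\times B_1$ and to $E\times B_2$, and renormalise each so that its first marginal is $\m_E$ (or at least is $\m$-absolutely continuous with comparable density). This gives two timelike optimal couplings $\pi^1,\pi^2$ with the same first marginal $\mu^*$ and mutually singular second marginals $\nu^1\perp\nu^2$.
\item Apply $\TMCP^e(K,N)$ to each. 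The natural route is through its consequence, obtained by disintegration or the $t$-midpoint approach of \Cref{midpointTMCP}, that for $t$ close to $1$ the $t$-midpoints $\mu^i_t$ are absolutely continuous, with mass not concentrating. The entropy bound $U_N(\mu^i_t)\ge\sigma^{(1-t)}_{K/N}(\Lambda)U_N(\mu^*)>0$ already gives $\mu^i_t\ll\m$.
\item Use $\mu^*=\frac12\mu^*+\frac12\mu^*$ and consider the midpoint of the mixture $\frac12(\pi^1+\pi^2)$, which is optimal because it is supported on $\Gamma$. Then run the branching argument of \Cref{singularmidpoint} in time-reversed form. The initial measures coincide, but the endpoints are singular. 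Gluing the plans at time $t$ yields an optimal plan that is not concentrated on a non-branching set, unless $\mu^1_t\perp\mu^2_t$.
\item Conversely, for $t$ small both $\mu^i_t$ are close to $\mu^*$. Absolute continuity together with the entropy bound should then force $\m(\spt\mu^1_t\cap\spt\mu^2_t)>0$. More precisely, take $t\to0$: both densities concentrate near $E$ with total volume roughly at least $\m(E)$, which gives overlap. This contradicts $q$-essential non-branching.
\end{enumerate}

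\textbf{Step 2 (uniqueness).} If $\pi=(\id\times T)_\#\mu_0$ and $\pi'=(\id\times T')_\#\mu_0$ are both optimal, then $\frac12(\pi+\pi')$ is optimal as well, since it is concentrated on $\Gamma$. By Step 1 this average is also induced by a map, which forces $T=T'$ $\mu_0$-a.e. Existence of some optimal coupling is guaranteed on compact $Y$ by dualizability.

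\textbf{Expected main obstacle.} The hardest part is Step 1(e): getting a quantitative overlap of the two midpoints $\mu^1_t,\mu^2_t$ from the entropic $\TMCP$ bound alone. I would first try to use the lower bound $\m(\spt\mu^i_t)\ge\sigma^{(1-t)}_{K/N}(\Theta)^N\m(E)$ coming from Jensen and the entropy estimate. Combined with the fact that both supports lie in an $o(1)$-neighbourhood of $E$ as $t\to0$, whose measure tends to $\m(E)$, this should force $\m(\spt\mu^1_t)+\m(\spt\mu^2_t)>\m(\text{neighbourhood})$, and hence an overlap of positive measure. The difficulty is making this precise, because the midpoints are absolutely continuous only a priori, and the estimate has to be done carefully to yield measure-theoretic (not merely support) overlap.
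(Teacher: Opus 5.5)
The paper gives no proof of this statement; it is quoted from Beran et al. So the comparison is with the standard Cavalletti--Mondino argument. You reproduce its architecture: two optimal couplings with a common absolutely continuous first marginal and mutually singular second marginals, separation of their intermediate measures by essential non-branching, and a volume contradiction as $t\to0$.

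The genuine gap is Step 1(c). As defined here, $\TMCP^e(K,N)$ only controls plans from $\mu\in\Pcal_c^{ac}(X)$ to a \emph{single point} $\delta_{x_0}$. You need instead $U_N(\mu^i_t)\ge\sigma^{(1-t)}_{K/N}(\Lambda)U_N(\mu^*)$ along an optimal plan from $\mu^*$ to the diffuse target $\nu^i$, and neither mechanism you suggest delivers this.
Disintegrating $\pi^i$ along the target gives conditional measures on the first factor that are in general not $\m$-absolutely continuous. If $\pi^i$ is induced by an injective map, they are Dirac masses, so $\TMCP^e$ cannot be applied fibrewise. Even where it can, $U_N$ of the recombined measure is not controlled by fibrewise bounds unless the fibres are mutually singular at time $t$.
\Cref{midpointTMCP}, for its part, only assembles a geodesic out of $t$-midpoints towards a Dirac target; it does not enlarge the class of admissible targets. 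Note also that the relevant regime is $t$ near $0$, not near $1$.

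What is missing is the ``Dirac-to-general'' extension of the measure contraction property, which is the main technical lemma of the standard proof. It runs as follows.
First, approximate $\nu^i$ by finitely supported measures $\sum_k\lambda_k\delta_{y_k}$ with timelike optimal couplings $\sum_k\lambda_k\,\mu^*_k\otimes\delta_{y_k}$, and apply $\TMCP^e$ to each totally timelike piece $(\mu^*_k,\delta_{y_k})$.
Second, use the time-reversed form of \Cref{singularmidpoint} (distinct endpoints force mutually singular intermediate pieces) to get $\Ent(\sum_k\lambda_k\mu^k_t)=\sum_k\lambda_k\Ent(\mu^k_t)+\sum_k\lambda_k\log\lambda_k$.
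Third, compare with $\Ent(\sum_k\lambda_k\mu^*_k)\ge\sum_k\lambda_k\Ent(\mu^*_k)+\sum_k\lambda_k\log\lambda_k$ at time $0$, and merge the coefficients by log-convexity of $\theta\mapsto\sigma^{(1-t)}_{K/N}(\sqrt\theta)$.
Finally, pass to the limit using stability of optimal couplings and upper semicontinuity of $U_N$. Keeping the approximating pairs totally timelike and optimal is where the compact set $\Gamma\subset X^2_\ll$ from $q$-separation is needed.

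Once this lemma is in hand, two points remain.
\begin{enumerate}
\item The plans it produces need not induce your $\pi^1,\pi^2$. To know their mixture is optimal, you must check that every optimal coupling of $(\mu^*,\nu^i)$ is concentrated on $\Gamma$. This follows from $\pi^i\le C\pi$ and $q$-separation, by swapping that portion of $\pi$.
\item The step you flag as the main obstacle is in fact routine. Take $\mu^*=\m_E$, shrinking $E$ so that the density of $\mu_0$ and $\pi_x(B_i)$ are pinched between positive constants; this also gives $\pi^i\le C\pi$. Jensen then yields $\m(\{\rho^i_t>0\})\ge\sigma^{(1-t)}_{K/N}(\Lambda)^N\m(E)$, and mutual singularity makes these two sets $\m$-essentially disjoint. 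Both lie in the $\epsilon$-neighbourhood $E^\epsilon$ for small $t$, by the compactness and regularity argument in the proof of \Cref{geodesicpathtocurve}. Since $\m(E^\epsilon)\downarrow\m(E)$, this gives the contradiction.
\end{enumerate}
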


\begin{lemma}[Existence of good simple sequences]\label{goodsimplesequence}
Let $X$ be a measured globally hyperbolic synthetic spacetime, and moreover suppose that for each totally timelike pair $(\mu_0,\mu_1)\in \Pcal^{ac}_c(X)^2$ admits a $q$-optimal map. Then for each totally timelike pair $(\mu_0,\mu_1)\in \Pcal_c^{ac}(X)^2$, there exists a sequence of pairs of simple measures $(\mu_0^n, \mu_1^n)\in\Pcal_c^{ac}(X)^2$, which we call a ``good simple sequence'', which satisfies the following properties: 

\begin{enumerate}
\item (Entry-wise simplicity) For each $i=0,1$, $\{\mu_i^n\}_{n\in\N}$ is a simple sequence (see \Cref{simplesequence}) converging to $\mu_i$.
\item (Correlated controlled decomposition) For each metrizer $d$ of $X$, and each positive sequence $\delta^n\to 0$, we may decompose $\mu_i^n$ as 
\[\mu_i^n= \sum_{j=1}^{M_n}\lambda_j^n \m_{A_{ij}^n},\]
where, for each $i=0,1$, the collection $\{A_{ij}^n\}_{j=1}^{M_n}$ consists of compact mutually disjoint sets of positive and finite $\m$-measure, $A_{ij}^n\subset \spt\mu_i$, and $\diam_d(A_{ij}^n)<\delta^n$.  
\item (Stability of couplings) Given the decomposition as above, and given $\pi_j^n\in \Opt_q(\m_{A_{0j}^n}, \m_{A_{1j}^n})$, the coupling $\pi^n:=\sum_{j=1}^{M_n}\lambda_j^n \pi_j^n$ is in $\Opt_q(\mu_0^n,\mu_1^n)$, and admits a narrow subsequential limit $\pi\in \TOpt_q(\mu_0, \mu_1)$. 
\end{enumerate}
\end{lemma}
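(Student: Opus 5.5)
The plan is to build the good simple sequence from the optimal map of the limit pair, and then to obtain exact optimality of $\pi^n$ from cyclical monotonicity via \Cref{cyclicallymonotoneoptimal}. I have not been able to close the key step (the last paragraph) for arbitrary $\pi_j^n$, so what follows is a plan with an identified gap, not a complete argument.

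\emph{Setup.} Fix a totally timelike pair $(\mu_0,\mu_1)\in\Pcal_c^{ac}(X)^2$ with densities $\rho_0,\rho_1$. Let $T$ be the $q$-optimal map given by the hypothesis, so that $\pi:=(\id\times T)_\#\mu_0$ is the $q$-optimal coupling. Since $\spt\mu_0\times\spt\mu_1$ is compactly contained in $X^2_\ll$, \Cref{cyclicallymonotoneoptimal} shows that $\pi$ is concentrated on an $\ell^q$-cyclically monotone set $\Gamma\subset\spt\mu_0\times\spt\mu_1\subset X^2_\ll$. By Lusin's theorem, for each $n$ there is a compact $C_n\subset\spt\mu_0$ with $\mu_0(\spt\mu_0\setminus C_n)<1/n$ on which $T$, $\rho_0$ and $\rho_1\circ T$ are continuous. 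We may also assume $\operatorname{graph}(T|_{C_n})\subset\Gamma$.

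\emph{Construction of the pieces.} Fix a metrizer $d$ and $\delta^n\to0$. Cover $C_n$ by finitely many compact, mutually disjoint sets $A_{0j}^n$ of positive measure with the following properties:
\begin{itemize}
\item $\diam_d(A^n_{0j})<\delta^n$ and $\diam_d(T(A^n_{0j}))<\delta^n$, using uniform continuity of $T$ on $C_n$;
\item $\rho_0$ and $\rho_1\circ T$ each oscillate by less than $1/n$ on $A^n_{0j}$, so that the measure-preserving relation $T_\#(\rho_0\m)=\rho_1\m$ makes $T$ nearly volume-scaling by a constant factor on each piece.
\end{itemize}
Set $A^n_{1j}:=T(A^n_{0j})$. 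These are compact, and they are essentially disjoint by the $\mu_0$-a.e. injectivity coming from uniqueness of optimal maps (\Cref{optimalmaps} applied in both directions). Take $\lambda^n_j:=\mu_0(A^n_{0j})/\mu_0(C_n)$ and set $\mu_i^n:=\sum_j\lambda^n_j\m_{A^n_{ij}}$. The density estimates show that $\mu_i^n\narrowto\mu_i$ and that the entropies converge, repeating the argument of \Cref{simplesequence}; this gives properties 1 and 2.

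\emph{Property 3 and the main obstacle.} It remains to show that $\pi^n=\sum_j\lambda_j^n\pi^n_j$ is in $\Opt_q(\mu_0^n,\mu_1^n)$. Its marginals are correct by construction, and it is concentrated in $\bigcup_j A^n_{0j}\times A^n_{1j}\Subset X^2_\ll$. By \Cref{cyclicallymonotoneoptimal} it therefore suffices to show that $\bigcup_j\spt\pi^n_j$ is $\ell^q$-cyclically monotone. For pairs drawn from a single block this holds because $\pi^n_j$ is optimal.

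The hard case is mixed cycles across blocks. The plan is to exploit $A^n_{1j}=T(A^n_{0j})$: every point $(x,y)\in\spt\pi_j^n$ lies within $\delta^n$ of the point $(x,T(x))\in\Gamma$. A cycle through several blocks is then compared with the corresponding cycle in $\Gamma$, which satisfies the monotonicity inequality. The error from this comparison must be absorbed, and I expect this to be the main obstacle. My first attempt would be to refine the partition so that between different blocks the monotonicity inequality on $\Gamma$ holds with a uniform strict gap exceeding the oscillation of $\ell^q_+$ at scale $\delta^n$. The gap would come from compactness of $C_n$ and continuity of $\ell_+$.

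If no such uniform gap can be arranged, I would instead use the freedom in the statement, namely that the conclusion is required only after passing to a subsequence. The fallback is to choose the blocks so that $T_\#\m_{A^n_{0j}}=\m_{A^n_{1j}}$ exactly: take $A^n_{0j}$ inside level sets of the volume ratio $\rho_0/(\rho_1\circ T)$, which is constant there, and then trim. Uniqueness of optimal maps then forces $\pi^n_j=(\id\times T)_\#\m_{A^n_{0j}}$, which is concentrated on $\Gamma$, so $\pi^n$ is concentrated on $\Gamma$ and is optimal.

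Either way, the subsequential limit $\pi\in\TOpt_q(\mu_0,\mu_1)$ follows directly from \Cref{Stableopt}.
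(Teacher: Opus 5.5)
Your proposal does not establish property 3, and neither of your two routes can be completed as stated.

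\textbf{The fallback.} This route fails at its first step. The level sets $\{r=c\}$ of $r:=\rho_0/(\rho_1\circ T)$ are pairwise disjoint, so at most countably many of them have positive $\m$-measure. Typically none do: for smooth densities on a smooth spacetime, $r$ is the Jacobian of $T$, which is continuous and non-constant. So you cannot choose blocks with $T_\#\m_{A^n_{0j}}=\m_{T(A^n_{0j})}$ that exhaust most of the mass of $\mu_0$, and trimming does not help; property 1 is then lost. Without that exact identity, uniqueness of optimal maps tells you nothing about an arbitrary $\pi^n_j$. The optimal coupling between the two uniform measures is induced by some other map, and gluing such maps across blocks need not be cyclically monotone.

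\textbf{The uniform gap.} This is not available in general either. For $(x,Tx),(x',Tx')\in\Gamma$, the quantity $\ell^q(x,Tx)+\ell^q(x',Tx')-\ell^q(x,Tx')-\ell^q(x',Tx)$ tends to $0$ as $x'\to x$ in $C_n$, because all four terms converge to the same value. Your comparison error, by contrast, is of the order of the oscillation of $\ell_+^q$ over a whole block. Blocks that carry almost all the mass sit too close together for the first quantity to dominate the second.

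\textbf{The missing idea.} Apply the optimal-map hypothesis to the approximating simple pair rather than to $(\mu_0,\mu_1)$. The paper does this as follows.
\begin{enumerate}
\item First take simple sequences $\tilde\mu_i^n=\sum_j\alpha^n_{ij}\m_{B^n_{ij}}$ from \Cref{simplesequence}. These are still totally timelike, since $\spt\tilde\mu_i^n\subset\spt\mu_i$.
\item Let $T_n$ be the optimal map from $\tilde\mu_0^n$ to $\tilde\mu_1^n$, and take Lusin sets $K_n$ and $K_n^{-1}$ on which $T_n$ and its inverse are continuous.
\item Form the cells $A^n_{0,jk}:=B^n_{0j}\cap K_n\cap T_n^{-1}(B^n_{1k}\cap K_n^{-1})$ and $A^n_{1,jk}:=T_n(A^n_{0,jk})\cap B^n_{1k}\cap K^{-1}_n$. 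Both densities are piecewise constant, so the density ratio is automatically constant on each cell. This gives $(T_n)_\#\m_{A^n_{0,jk}}=\m_{A^n_{1,jk}}$ exactly.
\item The cells still exhaust $K_n\cap T_n^{-1}(K_n^{-1})$ up to a null set. Compactness follows from continuity on the Lusin sets, and the diameter bounds are inherited from the $B^n_{ij}$.
\item Uniqueness of optimal maps now forces every $\pi^n_{jk}\in\Opt_q(\m_{A^n_{0,jk}},\m_{A^n_{1,jk}})$ to equal $(\id\times T_n)_\#\m_{A^n_{0,jk}}$. Hence $\pi^n=(\id\times T_n)_\#\mu_0^n$ is a restriction of an optimal coupling and is therefore optimal.
\end{enumerate}
The subsequential limit then follows from \Cref{Stableopt}, as you planned. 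Your fallback mechanism, exact push-forward plus uniqueness, is the right one. It has to be run for $T_n$ between the simple pair, where positive-measure level sets of the volume ratio exist by construction, not for the optimal map of the original pair.
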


\begin{proof}
Let $(\mu_0,\mu_1)\in\Pcal_c^{ac}(X)^2$ be a totally timelike pair, so $\spt\mu_0\times \spt\mu_1\subset X_\ll^2$. From \Cref{simplesequence}, we have the existence of simple sequences $\tilde\mu_i^n$ which narrowly converge to $\mu_i$, given by the controlled decomposition 
\[\tilde \mu_i^n := \sum_{j=1}^{M_{i,n}} \alpha_{ij}^n \m_{B_{ij}^n},\]
which depends on the metrizer $d$ and the positive sequence $\delta^n\to 0$. 

Recall that by assumption, for each $n\in\N$, there is a $q$-optimal map $T_n$ joining $\tilde\mu^n_0$ to $\tilde\mu_1^n$. Likewise, there is a $q$-optimal map $T_n^{-1}$ joining $\tilde\mu^n_1$ to $\tilde\mu_0^n$, and uniqueness implies that $T_n^{-1}\circ T_n(x)=x$ for $\tilde\mu_0$-a.e $x\in X$. From Lusin's theorem, there is compact set $K_n$ such that $\tilde\mu_0^n( K_n)>1-\epsilon$ and $T_n|_{K_n}$ is continuous. There is also compact $K_n^{-1}$ such that $\tilde \mu_1^n(K_n^{-1})>1-\epsilon$ and $T_n^{-1}|_{K_n^{-1}}$ is continuous. Since $\tilde \mu_1^n=(T_n)_\# \tilde \mu_0^n$, WLOG we may assume that $K_n^{-1}=T(K_n)$.  

We define $ A_{0,jk}^n:= B_{0,j}^n \cap K_n\cap T_n^{-1}(B_{1,k}^n\cap K_n^{-1})$, and $ A_{1,jk}^n:= T_n( A_{0,jk}^n)\cap K_n^{-1}\cap B_{1,k}^n$. Continuity of the maps $T_n$ and $T_n^{-1}$ ensures that the sets $A_{ijk}^n$ are compact. In fact one observes that the collection $\{ A_{i,jk}^n\}_{ijkn}$ is still a controlled decomposition, in the sense of \Cref{simplesequence}; that is, for each $n\in\N$, and each $i,j,k$, the set $A_{ijk}^n$ is compact, disjoint from $A_{i'j'k'}^n$ whenever $(i,j,k)\neq (i',j',k')$, and $\diam_d(A_{ijk}^n)<\delta^n$. Moreover 
\[\tilde \mu_1^n( A_{1,jk}^n)=(T_n)_\#\tilde \mu_0^n( A_{1,jk}^n) =\tilde\mu_0^n(T_n^{-1}(A_{1,jk}^n))=\tilde\mu_0^n(A_{0,jk}^n\cap T^{-1}_n(K_{1,k}^n\cap K_n^{-1})) = \tilde\mu_0(A_{0,jk}^n).  \]

After throwing away sets of null $\tilde\mu_0^n$ -measure and renumerating, we define $\{A_{i,j}^n\}_{j=1}^{M_n}:=\{A_{i,jk}^n\}_{j,k}$, and define 
\[ \mu_i^n:= \sum_{j=1}^{M_n}\lambda_{j}^n \m_{A_{i,j}^n},\]
where 
\[\lambda_{j}^n:= \frac{\tilde\mu_{0}^n( A_{0,j}^n)}{\sum_{j=1}^{M_n}\tilde\mu_{0}^n(A_{0,j}^n)}=\frac{\tilde\mu_{1}^n( A_{1,j}^n)}{\sum_{j=1}^{M_n}\tilde\mu_{1}^n( A_{1,j}^n)}.\]
We claim that the pair $(\mu_0^n,\mu_1^n)$ is a good simple sequence for $(\mu_0, \mu_1)$. Recall that to justify such a claim, we must verify 3 properties: 
\begin{enumerate}
\item Entrywise simplicity;
\item Correlated controlled decomposition;
\item Stability of couplings.
\end{enumerate}

Correlated controlled decomposition is satisfied, by construction. Let us verify entrywise simplicity. Recall that $\tilde\mu_i^n$ is a simple sequence converging to $\mu_i$. We need only show that $(\mu_0^n,\mu_1^n)$ is ``close'' to $(\tilde \mu_0^n,\tilde\mu_1^n)$. In particular, observe that since $\tilde\mu_0^n(K_n)>1-\epsilon$, and $\tilde \mu_1^n(K_n^{-1})>1-\epsilon$, for each Borel set $E\subset X$, 
\[\tilde\mu_i^n(E)-\epsilon\leq\mu_i^n(E)\leq (1-\epsilon)^{-1} \tilde\mu_i^n(E)\quad i=0,1.\]

Defining the densities $\rho_i^n$ and $\tilde \rho_i^n$ via $d\mu_i^n:=\rho_i^nd\m$ and $d\tilde\mu_i^n:=\tilde\rho_i^nd\m$, we then see that 
\[\tilde \rho_i^n-\epsilon\leq \rho_i^n\leq \frac{\tilde\rho_i^n}{1-\epsilon}.\]

From this one can immediately check that $\mu_i^n$ is indeed a simple sequence converging to $\mu_i$.

Finally let us prove stability of couplings. Observe that 
\[\mu_0^n= \tilde\mu_0^n(K_n\cap T_n^{-1}(K_n^{-1}))^{-1}\tilde \mu_0^n|_{K_n\cap T_n^{-1}(K_n^{-1})}= \tilde\mu_0^n(K_n)\tilde\mu_0^n|_{K_n},\]
and similarly
\[\mu_1^n= \tilde\mu_1^n(K_n^{-1})\tilde \mu_1^n|_{K_n^{-1}},\]
and therefore it is apparent that $T_n$ is a $q$-optimal map joining $\mu_0^n$ to $\mu_1^n$. 
Moreover observe now that for every Borel set $E\subset X$, we have 
\[\lambda_j^n\m_{\tilde A_{1,j}^n}(E)= \mu_1^n(E\cap \tilde A_{1,j}^n)=\mu_0^n((T_n)^{-1}(E\cap \tilde A_{1,j}^n))=\mu_0(T_n^{-1}(E)\cap \tilde A_{0,j}^n)= \lambda_j^n \m_{\tilde A_{0,j}^n}(T_n^{-1}(E)),\]
which proves that $T_n$ is also $q$-optimal between $\m_{A_{0,j}^n}$ and $\m_{A_{1,j}^n}$. 

This then implies that if $\pi_j^n\in \Opt_q(\m_{A_{0,j}^n},\m_{A_{1,j}^n})$, then $\pi_j^n=(\id\times T_n)_\#\m_{A_{0,j}^n}$, and so 
\[\pi^n:=\sum_{j=1}^{M_n}\lambda_j^n \pi_j^n = \sum_{j=1}^{M_n}\lambda_j^n (\id\times T_n)_\#\m_{A_{0,j}^n}= (\id\times T_n)_\# \mu_0^n \in \Opt_q(\mu_0^n, \mu_1^n).\]
Finally since $\spt\pi^n\subset \spt\mu_0\times \spt\mu_1\subset X_\ll^2$, the sequence $\{\pi^n\}_{n\in\N}$ admits a narrow subsequential limit $\pi\in \TOpt_q(\mu_0,\mu_1)$, by \Cref{Stableopt}.
\end{proof} 

\begin{lemma}[$\TCD_q^e(K,N)$ for $t$-midpoints between totally timelike pairs]\label{midpointTCDe} Fix $0<q<1$, and let $X$ be a timelike $q$-essentially non-branching globally hyperbolic synthetic spacetime, with $\ell_+$ continuous. Suppose for each totally timelike pair $(\mu_0,\mu_1)\in \Pcal_c^{ac}(X)^2$ and each $t\in [0,1]$, there exists a $t$-midpoint $\mu_t$ such that 
\[U_N(\mu_t)\geq \sigma_{K/N}^{(1-t)}(\Theta)U_N(\mu_0)+\sigma_{K/N}^{(t)}(\Theta)U_N(\mu_1),\]
where $\Theta:= \begin{cases}
\inf_{\spt\mu_0\times\spt\mu_1} \ell(x,y) & K\geq 0;\\
\sup_{\spt\mu_0\times \spt\mu_1}\ell(x,y) &K<0.
\end{cases}$. Then $X\in \TCD_q^e(K,N)$.
\end{lemma}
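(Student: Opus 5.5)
We need to pass from midpoint convexity on totally timelike pairs, with the crude constant $\Theta$, to the full $\TCD_q^e(K,N)$ condition. That condition concerns arbitrary $q$-timelike pairs in $\Pcal_c^{ac}(X)^2$, uses the averaged constant $\Lambda$, and requires a whole displacement geodesic. The plan has three stages. First, upgrade $\Theta$ to $\Lambda$ by decomposing into small pieces. Second, assemble a geodesic from midpoints, as in \Cref{midpointTMCP}. Third, approximate $q$-timelike pairs by totally timelike ones.

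\textbf{Stage 1 (from $\Theta$ to $\Lambda$ on totally timelike pairs).} Fix a totally timelike pair $(\mu_0,\mu_1)$ and $t\in(0,1)$. We work in the compact $Y:=J(\spt\mu_0,\spt\mu_1)$, using \Cref{spacetimestructure}. The hypothesis gives midpoint convexity for totally timelike pairs, so I would first mimic \Cref{TBMtoTMCP} to obtain $\TMCP^e(K,N)$. Here the hypothesis applied to pairs $(\m_A,\m_{B_n})$, with $B_n$ shrinking to a point, plays the role of \Cref{DiracTBM}. Then \Cref{optimalmaps} provides optimal maps, so \Cref{goodsimplesequence} yields a good simple sequence $\mu_i^n=\sum_j\lambda_j^n\m_{A_{ij}^n}$ with $\diam_d(A_{ij}^n)<\delta_n$. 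Here $\delta_n$ is chosen by \Cref{midpointapproxlemma} with $\epsilon=1/n$.

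Next, apply the hypothesis to each pair $(\m_{A_{0j}^n},\m_{A_{1j}^n})$, which is totally timelike. This gives $t$-midpoints $\nu_{tj}^n$ with
\[U_N(\nu^n_{tj})\geq \sigma^{(1-t)}_{K/N}(\Theta_j^n)\m^{1/N}(A_{0j}^n)+\sigma^{(t)}_{K/N}(\Theta_j^n)\m^{1/N}(A_{1j}^n).\]
Set $\mu_t^n:=\sum_j\lambda_j^n\nu_{tj}^n$. By \Cref{singularmidpoint}, the $\nu_{tj}^n$ are mutually singular. Its hypothesis requires $\mu_t^n$ to be a midpoint of $(\mu_0^n,\mu_1^n)$; this follows from concavity of $\ell_q^q$ together with the stability-of-couplings property, which shows $\sum_j\lambda_j^n\pi_j^n$ is optimal. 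Mutual singularity gives $\Ent(\mu_t^n)=\sum_j\lambda_j^n\Ent(\nu_{tj}^n)+\sum_j\lambda_j^n\log\lambda_j^n$, so
\[U_N(\mu_t^n)=\prod_j\bigl(U_N(\nu_{tj}^n)/(\lambda_j^n)^{1/N}\bigr)^{\lambda_j^n}.\]

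I then need a weighted geometric-mean inequality of the form
\[\prod_j(a_j\alpha_j+b_j\beta_j)^{\lambda_j}\geq \Bigl(\prod_j a_j^{\lambda_j}\Bigr)\Bigl(\prod_j\alpha_j^{\lambda_j}\Bigr)+\Bigl(\prod_j b_j^{\lambda_j}\Bigr)\Bigl(\prod_j\beta_j^{\lambda_j}\Bigr),\]
which is Hölder's inequality for the two-term sums. With $a_j=\sigma^{(1-t)}(\Theta_j^n)$ and $\alpha_j=(\m(A_{0j}^n)/\lambda_j^n)^{1/N}$, and similarly for $b_j,\beta_j$, this combines with the log-convexity estimate \eqref{distortionestimate1} for each coefficient. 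Since $|\Theta_j^n-\ell(x,y)|<1/n$ on $A_{0j}^n\times A_{1j}^n$, the square-mean of the $\Theta_j^n$ is within $1/n$ of $\Lambda^n=(\int\ell^2d\pi^n)^{1/2}$.

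Passing to the limit, the couplings satisfy $\pi^n\narrowto\pi\in\TOpt_q(\mu_0,\mu_1)$ and $\ell_+$ is continuous, so $\Lambda^n\to\Lambda_\pi$. The entropy terms converge by the entropy-continuity property of simple sequences. Upper semicontinuity of $U_N$ (lower semicontinuity of $\Ent$) and of $\ell_q$ show that a limit $\mu_t$ is a $t$-midpoint satisfying the convexity inequality with $\Lambda$.

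\textbf{Stage 2 (assembling a geodesic).} Iterate midpoints and take a diagonal narrow limit over a dense set of times exactly as in \Cref{midpointTMCP}. Concatenation of distortion inequalities along dyadic-type subdivisions is handled as in \cite[Thm.\ 3.33]{Braun}, yielding a geodesic. \Cref{optimalplanprop} then gives a timelike plan $\eta$ representing it. One must check that $\Lambda$ is computed with the same plan $\eta$; non-branching plus stability should give this.

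\textbf{Stage 3 (general $q$-timelike pairs).} Given a $q$-timelike pair with $\pi\in\TOpt_q$, cover $X^2_\ll\cap\spt\pi$ by countably many products $U_k\times V_k$ compactly inside $X_\ll^2$. Restrict $\pi$ to these pieces, which preserves optimality by the restriction proposition, to get totally timelike pairs $(\mu_0^k,\mu_1^k)$. Glue the resulting geodesics using \Cref{singularmidpoint} for mutual singularity, and pass to the limit by stability and semicontinuity.

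The main obstacle is Stage 1's combination step, together with ensuring the final $\Lambda$ matches the plan actually produced. I would first try the Hölder inequality route described above and verify that the $\pm1/n$ errors vanish uniformly.
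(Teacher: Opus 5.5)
Your Stage 1 and Stage 2 are in the wrong order for the iteration to work. Stage 1, like the hypothesis itself, gives the $\Lambda$-midpoint inequality only for \emph{totally timelike} pairs. The iteration in Stage 2 (as in \Cref{midpointTMCP}, or by dyadic subdivision) must re-apply it to sub-pairs such as $(\mu_0,\mu_s)$ or $(\mu_s,\mu_1)$. Total timelikeness is not inherited by these sub-pairs.

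For example, in Minkowski space $\R^{1,1}$ let $\mu_0,\mu_1$ be uniform on the slabs $[0,\epsilon]\times[-1,1]$ and $[10,10+\epsilon]\times[-1,1]$ (time $\times$ space). This pair is totally timelike. But for small $s$, any $s$-midpoint has support points at time $\le 10s+\epsilon$ and position roughly $\ge 1-2s$, and these are spacelike to $(0,-1)\in\spt\mu_0$. (With a Dirac target this cannot happen, which is why \Cref{midpointTMCP} works as stated.)

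So Stage 2 already needs the generality that Stage 3 is meant to supply, while Stage 3 glues the geodesics produced by Stage 2. The repair is to first prove the midpoint inequality with $\Lambda$ for \emph{all} $q$-timelike pairs in $\Pcal_c^{ac}(X)^2$, a class stable under passing to sub-segments. Only then should you run the iteration. That is the order the paper uses.

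The paper also obtains that extension with much less machinery than your Stage 1. The hypothesis concerns arbitrary absolutely continuous totally timelike pairs, not only uniform measures. So simple measures, good simple sequences, $\TMCP^e$ and optimal maps are unnecessary here; they are what \Cref{sTBMtoTCDe} needs, because $\sTBM_q^*$ only speaks about uniform measures. The paper instead cuts a given $\pi\in\TOpt_q(\mu_0,\mu_1)$ directly:
\begin{itemize}
\item restrict $\pi$ to compact sets $K_n\subset X^2_\ll$ with $\pi(K_n)\to 1$;
\item decompose $\pi|_{K_n}$ along finitely many small product cells inside $X^2_\ll$;
\item apply the hypothesis to the marginals of each piece, which are totally timelike by construction and optimal by restriction;
\item recombine with \Cref{singularmidpoint}, H\"older and log-convexity;
\item let $n\to\infty$, using $\rho_i-\epsilon<\rho_i^n\le(1+\epsilon)\rho_i$ for entropy convergence.
\end{itemize}
This does your Stages 1 and 3 in one step, at the midpoint level.

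Whichever way you cut, \Cref{singularmidpoint} needs the \emph{initial} marginals of distinct pieces to be mutually singular. Restricting $\pi$ to disjoint Borel subsets of $X^2$ does not give this, since cells $U\times V$ and $U\times V'$ share a first factor. So your Stage 3 needs cells that are pairwise disjoint in at least one factor, together with the time-reversed version of that lemma. Only singularity at time $t$ really matters: the inequality $\Ent(\sum_k\lambda_k\mu_k)\ge\sum_k\lambda_k(\Ent(\mu_k)+\log\lambda_k)$ holds unconditionally and already gives the endpoint bounds you need.
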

\begin{proof}
We will prove that for each $q$-timelike dualizable pair of measures $(\mu_0,\mu_1)$, and each $t\in [0,1]$, there is a $t$-midpoint $\mu_t$ and a $q$-optimal coupling $\pi\in\TOpt_q(\mu_0,\mu_1)$ such that 
\[U_N(\mu_t)\geq \sigma_{K/N}^{(1-t)}(\Lambda)U_N(\mu_0)+\sigma_{K/N}^{(t)}(\Lambda)U_N(\mu_1),\]
where $\Lambda :=(\int \ell^2d\pi)^{1/2}$. Once proved, a recreation of the proof of \Cref{midpointTMCP} will give show that $X\in \TCD_q^e(K,N)$.

To achieve the aforementioned result, we are inspired by the proof within \cite[Thm. 7.1]{McCann}. 
Let $(\mu_0, \mu_1)\in \Pcal_c^{ac}(X)^2$ be $q$-timelike, and let $\pi\in \Opt_q(\mu_0,\mu_1)$ with $\pi(X_\ll^2)=1$. We may consider $\pi$ as a probability measure on the compact metric space $\spt\mu_0\times \spt\mu_1$, which implies $\pi$ is inner regular. Inner regularity implies that there exists a nested increasing sequence $\{K_n\}_{n\in\N}$ of compact sets contained in $X_\ll^2$ such that for each $\epsilon>0$, there is $n_0$ such that for all $n>n_0$, $\pi(K_n)>1-\epsilon$. Observe that 
\begin{align}\label{pieq1}
\pi|_{K_m}\leq \pi|_{K_n}\leq\pi \quad \forall\ m\leq n.
\end{align}

It is therefore clear that $\pi^n:=\pi(K_n)^{-1}\pi|_{K_n}\to \pi$ as $\epsilon\to 0$, by which we mean the measures converge setwise. Moreover we define $\mu_0^n:= (p_1)_\# \pi^n=\pi(K_n)^{-1} (p_1)_\#\pi|_{K_n}=: \pi(K_n)^{-1} \tilde\mu_0^n$, and so $\mu_0^n\to\mu_0$ as well. We now prove that $\Ent(\mu_0^n)\to \Ent(\mu_0)$. We see from \eqref{pieq1} that $\tilde\mu_0^{m}\leq\tilde\mu_0^n\leq \mu_0 $ for all $m\leq n$. It follows that $\tilde\mu_0^n\ll \m$, and $\tilde \rho_0^n:=\diff{\tilde\mu_0^n}{\m}\leq \rho_0\:=\diff{\mu_0}{\m}$. Clearly then $\mu_0^n\ll \m$, with $\rho_0^n:=\diff{\mu_0^n}{\m}= \pi(K_n)^{-1} \tilde\rho_0^n$. Moreover as $\tilde\mu_0^n\leq \mu_0$ and $\tilde \mu_0^n\to \mu_0$, it's not hard to see that $\tilde \rho_0^n\to \rho_0$ pointwise $\m$-a.e. We then obtain that, for each $\epsilon>0$, there is $n_0$ such that for all $n>n_0$, and $\m$-a.e. $x\in X$, 
\[\rho_0-\epsilon< \rho_0^n\leq (1+\epsilon)\rho_0.\]
Recreating the proof of item {\it 3)} in \Cref{simplesequence} allows us to conclude that $\Ent(\mu_0^n)\to \Ent(\mu_0)$. The same proof also works to show that $\Ent(\mu_1^n)\to \Ent(\mu_1)$, where $\mu_1^n:= (p_2)_\# \pi^n$.

We return now to the main body of the proof. 
Observe that $\spt\pi^n\subset K_n\subset X_\ll^2$. For each $(x,y)\in \spt\pi^n$, there is open set $ A_x^n\times B_y^n$ such that $(x,y)\in A_x\times B_y\subset X_\ll^2$. The set $\{A_x^n \times  B_y^n\}_{(x,y)\in \spt\pi_n}$ then form an open cover of $\spt\pi^n$, and so admit a finite subcover $\{U_k^n\}:=\{A_k^n\times B_k^n \}_{k=1}^{M_n}$. By taking finite intersections, we may assume WLOG that $A_k^n\cap A_{k'}^n=\emptyset$ and $B_k^n\cap B_{k'}^n=\emptyset$ for all $k\neq k'$. 
We then define 
\[\pi^n_k:= \pi^n(U^n_k)^{-1}\pi^n|_{U^n_k}.\]
Then with $\lambda_k^n:= \pi^n(U^n_k)$, we obtain that $\pi^n=\sum_{k=1}^{M_n}\lambda_k^n \pi_k^n$. Note that fixing a metrizer $d$ of $X$, we may also assume WLOG that $\diam_{d^2}(A_k^n\times B_k^n)$ is of sufficiently small diameter such that 
\begin{align}\label{ThetaLambdaestimate}
|(\Theta_k^n)^2-(\Lambda_k^n)^2|< \frac 1n,
\end{align}
where we recall 
\begin{align*}
\Lambda_k^n:= \left(\int \ell^2(x,y) d\pi_k^n\right)^{1/2},\quad
\Theta_k^n:= \begin{cases}
\sup_{A_k^n\times B_k^n}\ell(x,y) & K<0;\\
\inf_{A_k^n\times B_k^n}\ell(x,y) & K\geq 0.
\end{cases}
\end{align*}
Such a sufficiently small diameter exists by the uniform continuity of $\ell$ over the compact set $K\subset X_\ll$.

We observe that, since each $\pi_k^n$ is obtained by restricting the optimal coupling $\pi$, we have that $\pi_k^n\in \Opt_q(\mu_{0k}^n, \mu_{1k}^n)$, where $\mu_{0k}^n:=(p_1)_\# \pi_k^n$ and $\mu_{1k}^n:=(p_2)_\# \pi_k^n$. Moreover since $(\mu_{0k}^n,\mu_{1k}^n)$ are a totally timelike pair, we have, by assumption, the existence of a $t$-midpoint $\mu_{tk}^n$ such that 
\[U_N(\mu_{tk}^n)\geq \sigma_{K/N}^{(1-t)}(\Theta_k^n)U_N(\mu_{0k}^n)+\sigma_{K/N}^{(t)}(\Theta_k^n)U_N(\mu_{1k}^n),\]

We define $\mu_t^n:= \sum_{k=1}^{M_n} \lambda_k^n \mu_{tk}^n$. Compactness of the set $J(\spt\mu_0,\spt\mu_1)$ on which the collection of measures $\{\mu_t^n\}_{n\in \N}$ is uniformly concentrated implies that the collection admits a narrow subsequential limit $\mu_t$. Using the concavity of $\ell_q^q$, and the fact that $\pi^n:= \sum_{k=1}^{M_n}\lambda_k^n \pi_k^n$ is optimal, we have that 
\[\ell_q^q(\mu_0^n, \mu_t^n) \geq \sum_{k=1}^{M_n} \lambda_k^n \ell_q^q(\mu_{0k}^n, \mu_{tk}^n) \geq t^q \sum_{k=1}^{M_n}\lambda_k^n \ell_q^q(\mu_{0k}^n, \mu_{1k}^n)= t^q\ell_q^q(\mu_0^n, \mu_1^n).\]
Upper semicontinuity of $\ell_q$ then guarantees that, after restricting to a subsequence, that $\ell_q(\mu_0,\mu_t)\geq t\ell_q(\mu_0, \mu_1)$, and similarly, $\ell_q(\mu_t,\mu_1)\geq (1-t)\ell_q(\mu_0,\mu_1)$. Therefore $\mu_t$ is a $t$-midpoint joining $\mu_0$ to $\mu_1$. Now from \Cref{singularmidpoint}, we have that for each $i=0,t,1$, $\mu_{ik}^n\perp \mu_{ik'}^n$ for all $k\neq k'$, and so for $i=0,t,1$
\begin{align}\label{linearityofEnt}
\Ent(\mu_i^n) = \sum_{k=1}^{M_\epsilon} \Ent(\lambda_k^n \mu_i^n)=\sum_{k=1}^{M_n} \lambda_k^n \left(\log\lambda_k^n + \Ent(\mu_{ik}^n)\right).
\end{align}
We then obtain, 
\begin{align*}
U_N(\mu_t^n) &= \prod_{k=1}^{M_n} \left(\frac{ U_N(\mu_{tk}^n)}{(\lambda_k^n)^{1/N}}\right)^{\lambda_k^n}\geq \prod_{k=1}^{M_\epsilon} \left(\sigma_{K/N}^{(1-t)}(\Theta_k^n)\frac{U_N(\mu_{0k}^n)}{(\lambda_k^n)^{1/N}}+\sigma_{K/N}^{(t)}(\Theta_k^n)\frac{U_N(\mu_{1k}^n)}{(\lambda_k^n)^{1/N}}\right)^{\lambda_k^n}\\ 
&\overset{(*)}{\geq} \prod_{k=1}^{M_\epsilon} \left(\sigma_{K/N}^{(1-t)}(\Lambda_k^n)\right)^{\lambda_k^n}\left(\frac{ U_N(\mu_{0k}^n)}{(\lambda_k^n)^{1/N}}\right)^{\lambda_k^n}+ \prod_{k=1}^{M_\epsilon} \left(\sigma_{K/N}^{(t)}(\Theta_k^n)\right)^{\lambda_k^n}\left(\frac{ U_N(\mu_{1k}^n)}{(\Theta_k^n)^{1/N}}\right)^{\lambda_k^n}\\ 
&\overset{(**)}{\geq} \sigma_{K/N}^{(1-t)}\left(\left(\sum_{k=1}^{M_n} \lambda_k^n \left(\Theta_k^n\right)^2\right)^{1/2}\right)U_N(\mu_0^n)+\sigma_{K/N}^{(t)}\left(\left(\sum_{k=1}^{M_n} \lambda_k^n \left(\Theta_k^n\right)^2\right)^{1/2}\right)U_N(\mu_1^n)\\
&\overset{(***)}{\geq}\sigma_{K/N}^{(1-t)}\left(\left(\sum_{k=1}^{M_n} \lambda_k^n \left(\Lambda_k^n\right)^2\mp \frac{1}{n}\right)^{1/2}\right)U_N(\mu_0^n)+\sigma_{K/N}^{(t)}\left(\left(\sum_{k=1}^{M_n} \lambda_k^n \left(\Lambda_k^n\right)^2\mp \frac{1}{n}\right)^{1/2}\right)U_N(\mu_1^n)\\
&= \sigma_{K/N}^{(1-t)}\left(\left((\Lambda^n)^2\mp \frac{1}{n}\right)^{1/2}\right)U_N(\mu_0^n)+\sigma_{K/N}^{(t)}\left(\left( (\Lambda^n)^2\mp \frac{1}{n}\right)^{1/2}\right)U_N(\mu_1^n),
\end{align*}
where we use Holder's inequality to obtain the inequality $(*)$, log-convexity of $\theta\mapsto \sigma_{k}^{(t)}(\sqrt\theta)$ together with \eqref{linearityofEnt} to obtain $(**)$, and the monotonicity of $\theta\mapsto \sigma_k^{(t)}(\theta)$, and \eqref{ThetaLambdaestimate} to obtain $(*{*}*)$. In the last inequality, we choose $-\frac1n$ if $K\geq 0$ and $+\frac1n$ if $K<0$. Here $\Lambda^n$ is defined as  
\[\Lambda^n:=\left(\sum_{k=1}^{M_n} \lambda_k^n(\Lambda_k^n)^2\right)^{1/2}=\left(\sum_{k=1}^{M_n}\int \ell^2(x,y) \lambda_k^nd\pi^n_k(x,y)\right)^{1/2}=\left(\int \ell^2(x,y) d\pi^n(x,y)\right)^{1/2}.\] 

Continuity of $\ell_+$ and narrow convergence $\pi^n\narrowto \pi$ gives that $\Lambda^n\to \Lambda$. Finally using the lower semicontinuity of $U_N$, the continuity $\Ent(\mu_i^n)\to \Ent(\mu_i^n)$, and continuity of $\theta\mapsto \sigma_{k}^{(t)}(\theta)$ we obtain that

\[U_N(\mu_0)\geq \sigma_{K/N}^{(1-t)}\left( \Lambda\right)U_N(\mu_0)+\sigma_{K/N}^{(t)}\left(\Lambda\right)U_N(\mu_1),
\]
as desired. Implicitly we have used the fact that $\sigma_{K/N}^{(t)}(\Lambda)>0$. 

\end{proof}
\begin{theorem}\label{sTBMtoTCDe} Fix $0<q<1$. Let $X$ be a timelike $q$-essentially non-branching globally hyperbolic synthetic spacetime with $\ell_+$ continuous. If $X\in \sTBM_q^*(K,N)$, then $X\in \TCD_q^e(K,N)$. 
\end{theorem}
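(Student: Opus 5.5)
By \Cref{midpointTCDe}, it suffices to fix a totally timelike pair $(\mu_0,\mu_1)\in\Pcal_c^{ac}(X)^2$ and $t\in(0,1)$, and to produce a $t$-midpoint $\mu_t$ satisfying
\[U_N(\mu_t)\geq \sigma_{K/N}^{(1-t)}(\Theta)U_N(\mu_0)+\sigma_{K/N}^{(t)}(\Theta)U_N(\mu_1),\]
with $\Theta$ the inf/sup of $\ell$ over $\spt\mu_0\times\spt\mu_1$.

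\textbf{Step 1 (preliminaries).} Note that $\sTBM_q^*(K,N)$ implies $\TBM^*(K,N)$. For a totally timelike compact pair $(A,B)$ we have $\spt(e_t)_\#\eta\subset G_t(A,B)$, and $\m_A,\m_B$ are $q$-timelike. Hence \Cref{TBMtoTMCP} gives $X\in\TMCP^e(K,N)$. Totally timelike pairs are $q$-separated, so \Cref{optimalmaps} supplies unique $q$-optimal maps. This verifies the hypothesis of \Cref{goodsimplesequence}. We then work inside the compact space $Y:=J(\spt\mu_0,\spt\mu_1)$, where \Cref{spacetimestructure} applies.

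\textbf{Step 2 (discretise and apply $\sTBM_q^*$ pairwise).}
\begin{itemize}
\item Fix a metrizer $d$. For each $n$, \Cref{midpointapproxlemma} with $\epsilon=1/n$ gives a $\delta_n\to0$.
\item Take a good simple sequence $\mu_i^n=\sum_j\lambda_j^n\m_{A_{ij}^n}$ with $\diam_d(A_{ij}^n)<\delta_n$.
\item Each pair $(A_{0j}^n,A_{1j}^n)$ is compact and totally timelike, hence $q$-timelike. So $\sTBM_q^*$ gives a plan $\eta_j^n$ with
\[\m^{1/N}(A_{tj}^n)\ge\sigma^{(1-t)}_{K/N}(\Theta_j^n)\m^{1/N}(A_{0j}^n)+\sigma^{(t)}_{K/N}(\Theta_j^n)\m^{1/N}(A_{1j}^n),\]
where $A_{tj}^n:=\spt(e_t)_\#\eta_j^n\subset G_t(A_{0j}^n,A_{1j}^n)$ is compact.
\item Set $\mu_t^n:=\sum_j\lambda_j^n\m_{A_{tj}^n}$.
\end{itemize}

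\textbf{Step 3 (midpoint property).} By concavity of $\ell_q^q$ and \Cref{midpointapproxlemma},
\[\ell_q^q(\mu_0^n,\mu_t^n)\ge t^q\sum_j\lambda_j^n\ell_q^q(\m_{A_{0j}^n},\m_{A_{1j}^n})-\tfrac1n=t^q\ell_q^q(\mu_0^n,\mu_1^n)-\tfrac1n.\]
The last equality uses stability of couplings (property 3 of \Cref{goodsimplesequence}): the glued coupling is optimal. Passing to a narrow subsequential limit $\mu_t$, upper semicontinuity of $\ell_q$ gives $\ell_q(\mu_0,\mu_t)\ge t\ell_q(\mu_0,\mu_1)$. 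Convergence $\ell_q(\mu_0^n,\mu_1^n)\to\ell_q(\mu_0,\mu_1)$ follows from $\pi^n\narrowto\pi\in\TOpt_q$ and continuity of $\ell_+$. The other side is symmetric.

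\textbf{Step 4 (entropy, the main obstacle).} We need the $A_{tj}^n$ to be mutually disjoint, or at least the $\m_{A_{tj}^n}$ mutually singular. I would invoke \Cref{singularmidpoint}. It requires the $\m_{A_{tj}^n}$ to be $t$-midpoints of the pieces and $\mu_t^n$ to be a $t$-midpoint of $(\mu_0^n,\mu_1^n)$; both hold only up to $1/n$ errors. The remedy I would try first is to replace $\m_{A_{tj}^n}$ by the actual midpoint $(e_t)_\#\eta_j^n$ and compare entropies via Jensen. That midpoint is supported in $A_{tj}^n$, so $U_N((e_t)_\#\eta_j^n)\le\m^{1/N}(A_{tj}^n)$. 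However, this inequality runs the wrong way for a lower bound on $U_N(\mu_t^n)$. So I expect to need the uniform measures themselves, obtaining singularity through essential non-branching applied directly to the glued plan $\sum_j\lambda_j^n\eta_j^n$. This plan is optimal by stability of couplings, and the argument of \Cref{singularmidpoint} then gives disjointness of the supports up to null sets.

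Granting disjointness, proceed as in Step 2 of \Cref{TBMtoTMCP}:
\[U_N(\mu_t^n)=\prod_j\bigl(\m^{1/N}(A_{tj}^n)/(\lambda_j^n)^{1/N}\bigr)^{\lambda_j^n}.\]
Insert the $\sTBM^*$ bound, then split the two terms with H\"older as in $(*)$ of \Cref{midpointTCDe}. Monotonicity of $\sigma$ and $\Theta_j^n$ versus $\Theta$ (each $\Theta_j^n\ge\Theta$ for $K\ge0$, $\le\Theta$ for $K<0$) give
\[U_N(\mu_t^n)\ge\sigma^{(1-t)}_{K/N}(\Theta)U_N(\mu_0^n)+\sigma^{(t)}_{K/N}(\Theta)U_N(\mu_1^n).\]
Finally, let $n\to\infty$ using upper semicontinuity of $U_N$ and $U_N(\mu_i^n)\to U_N(\mu_i)$ from the simple-sequence property.
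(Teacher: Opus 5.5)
Your route is the paper's. You reduce to totally timelike pairs via \Cref{midpointTCDe}, take a good simple sequence with pieces below the $\delta_n$ of \Cref{midpointapproxlemma}, and apply $\sTBM_q^*$ piece by piece. You then put uniform measures on the supports $A_{tj}^n$, get the midpoint property from concavity of $\ell_q^q$ plus stability of couplings, and get the entropy bound from disjointness plus H\"older. Two of your steps are sound and more careful than the paper: Step 1 checks the optimal-map hypothesis of \Cref{goodsimplesequence} through \Cref{TBMtoTMCP} and \Cref{optimalmaps}, and you compare $\sigma^{(s)}_{K/N}(\Theta_j^n)\ge\sigma^{(s)}_{K/N}(\Theta)$ directly rather than through log-convexity. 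The gap is in Step 4: your proposed remedy does not give what the entropy formula needs.

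Applying essential non-branching to the glued plan $\sum_j\lambda_j^n\eta_j^n$ is legitimate. The $(e_t)_\#\eta_j^n$ are exact $t$-midpoints of the pieces, and the glued endpoint coupling is optimal by stability of couplings, so \Cref{singularmidpoint} applies as stated. But its conclusion is only $(e_t)_\#\eta_j^n\perp(e_t)_\#\eta_k^n$.

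The identity $U_N(\mu_t^n)=\prod_j\bigl(\m(A_{tj}^n)/\lambda_j^n\bigr)^{\lambda_j^n/N}$ instead needs $\m(A_{tj}^n\cap A_{tk}^n)=0$ for the supports. Mutual singularity says nothing about how the supports overlap. For example, $\m|_E$ and $\m|_{X\setminus E}$ are singular yet both have full support when $E$ meets every nonempty open set in positive but not full measure.

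Nor is disjointness a removable technicality. For overlapping pieces, convexity of $\Ent$ only gives $U_N(\mu_t^n)\ge\prod_j\m(A_{tj}^n)^{\lambda_j^n/N}$, which lacks the factor $\prod_j(\lambda_j^n)^{-\lambda_j^n/N}$. That factor is unbounded as the partition refines, and it is exactly what keeps $U_N(\mu_0^n)$ and $U_N(\mu_1^n)$ away from zero. Without it the resulting lower bound tends to $0$.

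So you need an extra ingredient that turns singularity of the time-$t$ marginals into $\m$-negligible overlap of their supports. One possible route, which you would have to verify, is to use uniqueness of the optimal plan, injectivity of $e_t$ on a set carrying it, and a Lusin-type refinement of the pieces. The aim would be to realise the $A_{tj}^n$ as images of the disjoint compacts $A_{0j}^n$ under one map that is continuous and injective there.

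The paper's proof asserts this disjointness in one line, citing stability of couplings and \Cref{singularmidpoint}, which is precisely your argument. You have located the delicate point correctly, but neither your version nor the paper's closes it as written.
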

\selectlanguage{english}
\begin{proof}
We mirror the proof of \Cref{TBMtoTMCP}. 
Let $(\mu_0,\mu_1)\in \Pcal_{c}^{ac}(X)^2$ be a totally timelike pair.  From global hyperbolicity, $Y:=J(\spt\mu_0, \spt\mu_1)$ is a compact measured synthetic spacetime. Let $d$ be a metrizer of $Y$. By \Cref{midpointapproxlemma}, for each $n$ there is $\delta_n>0$ such that for each totally timelike pair of measures $(\nu_0, \nu_1)\in\Pcal(Y)^2$ and each $\nu_t\in \Pcal(Y)$ which is concentrated on $G_t(\spt\nu_0, \spt\nu_1)$, if $\max\{\diam_d(\spt\nu_0), \diam_d(\spt\nu_1)\}<\delta_n$, then 
\[\ell_q^q(\nu_0, \nu_t)\geq t^q\ell_q^q(\nu_0, \nu_1)-\frac 1n; \quad \ell_q^q(\nu_t, \nu_1)\geq (1-t)^q\ell_q^q(\nu_0, \nu_1)-\frac 1n.\]

WLOG we may assume that $\delta_n\to 0$. 
From \Cref{goodsimplesequence}, there exists a good simple sequence $(\mu_0^n, \mu_1^n)$ which narrowly converges to $(\mu_0, \mu_1)\in \Pcal(Y)^2$. From the correlated controlled decomposition, we have  $\mu_i^n:= \sum_{j=1}^{M_n} \lambda_j^n\m_{A_{i,j}^n}\in \Pcal(Y)$, and $\diam_d(A_{i,j}^n)<\delta_n$. Applying $\sTBM_q(K,N)$ to the totally timelike pair $(A_{0,j}^n, A_{1,j}^n)$, we see that there exists a geodesic $\mu_{t,j}^n$, such that, with $A_{t,j}^n:= \spt\mu_{t,j}$, we have 
\[\m^{1/N}(A_{t,j}^n)\geq \sigma_{K/N}^{(1-t)}(\Theta_j^n) \m^{1/N}(A_{0,j}^n)+\sigma_{K/N}^{(t)}(\Theta_j^n)\m^{1/N'}(A_{1,j}^n)\quad \forall t\in [0,1], \]
where $\Theta_j^n:=\Theta(A_{0,j}^n, A_{1,j}^n)$. Define $\mu_t^n:= \sum_{j=1}^{M_n} \lambda_j^n \m_{A_{t,j}^n}$. By the same argument as in \Cref{TBMtoTMCP} using \Cref{midpointapproxlemma}, we see that $\mu_t^n$ converges subsequentially to a $t$-midpoint of $\mu_0$ and $\mu_1$. As the sets $A_{0,j}^n$ are mutually disjoint, from the stable couplings property together with \Cref{singularmidpoint}, the sets $A_{t,j}^n$ are also mutually disjoint. Therefore 
\begin{align*}
\Ent(\mu_t^n) &=\sum_{j=1}^{M_n} \Ent(\lambda_j^n\m_{A_{t,j}^n}) = \sum_{j=1}^{M_n} \lambda_j^n\log\frac{\lambda_j^n}{\m(A_{t,j}^n)}.
\end{align*}
Therefore we have 
\begin{align*}
U_N(\mu_t^n) &=\exp\left(-\frac{\Ent(\mu_t^n)}{N}\right) =\prod_{j=1}^{M_n} \left(\frac{\m(A_{t,j}^n)}{\lambda_j^n}\right)^{\lambda_j^n/N}\\
& \geq \prod_{j=1}^{M_n}\left(\sigma_{K/N}^{(1-t)}(\Theta_j^n)\frac{\m^{1/N}(A_{0,j}^n)}{(\lambda_j^n)^{1/N}}+\sigma_{K/N}^{(t)}(\Theta_j^n)\frac{\m^{1/N}(A_{1,j}^n)}{(\lambda_j^n)^{1/N}}\right)^{\lambda_j^n }
\end{align*}
From this point, one can apply the same argument that was used in \Cref{midpointTCDe} to arrive at
\begin{align*}
U_N(\mu_t^n) &\geq  \sigma_{K/N}^{(1-t)}\left( \sqrt{\sum_{j=1}^{M_n} \lambda_j^n(\Theta_j^n)^2}\right)U_N(\mu_0^n)+\sigma_{K/N}^{(t)}\left(\sqrt{\sum_{j=1}^{M_n} \lambda_j^n(\Theta_j^n)^2}\right)U_N(\mu_1^n)\\
&\geq \sigma_{K/N}^{(1-t)}\left( \Theta^n\right)U_N(\mu_0^n)+\sigma_{K/N}^{(t)}\left(\Theta^n\right)U_N(\mu_1^n).
\end{align*}
Taking an appropriate subsequential limit, one arrives at 
\begin{align*}
U_N(\mu_t)\geq \sigma_{K/N}^{(t)}(\Theta)U_N(\mu_0)+\sigma_{K/N}^{(1-t)}(\Theta)U_N(\mu_1),
\end{align*}
which implies that $X\in \TCD_q^e(K,N)$, by \Cref{midpointTCDe}.
\end{proof}

\subsection{Equivalence between $\sTBM_q(K,N^+)$ and $\TCD_q(K,N)$}
We now reproduce the results of the previous section to prove that $X\in \TCD_q(K,N)$ iff $X\in \sTBM_q(K,N')$ for all $N'>N$. First we need to establish a midpoint lemma for the $\TCD_q$ condition, in analogy with \Cref{midpointTCDe}. 
\begin{lemma}[$\TCD_q(K,N)$ for $t$-midpoints between totally timelike pairs]\label{midpointTCD}
Fix $0<q<1$ and let $(X,\ell,\tau,\m)$ be a measured timelike $q$-essentially non-branching globally hyperbolic synthetic spacetime with $\ell_+$ continuous. Suppose for each totally timelike pair $(\mu_0,\mu_1)\in \Pcal_c^{ac}(X)^2$, and each $t\in[0,1]$, there is a $t$-midpoint $\mu_t$ such that for all $N'>N$, 
\[S_{N'}(\mu_t)\leq \tau_{K,N'}^{(1-t)}(\Theta)S_{N'}(\mu_0)+ \tau_{K,N'}^{(1-t)}(\Theta)S_{N'}(\mu_1).\]
Then $X\in \TCD_q(K,N)$. 
\end{lemma}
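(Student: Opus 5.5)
The plan is to mirror the proof of \Cref{midpointTCDe}, replacing the entropy $U_N$ by the R\'enyi entropies $S_{N'}$ and the coefficients $\sigma_{K/N}$ by $\tau_{K,N'}$. Then I would rebuild a geodesic from $t$-midpoints as in \Cref{midpointTMCP}.

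Fix a $q$-timelike pair $(\mu_0,\mu_1)\in\Pcal_c^{ac}(X)^2$ and a timelike $q$-optimal coupling $\pi$. As in \Cref{midpointTCDe}, I use inner regularity to pick nested compact sets $K_n\subset X_\ll^2$ with $\pi(K_n)\to1$, and set $\pi^n:=\pi(K_n)^{-1}\pi|_{K_n}$. I then cover $\spt\pi^n$ by finitely many products $A_k^n\times B_k^n\subset X_\ll^2$. After disjointifying, I assume the $A_k^n$ are mutually disjoint, the $B_k^n$ are mutually disjoint, and their diameters are small enough that $|\ell(x,y)-\Theta_k^n|<1/n$ on $A_k^n\times B_k^n$. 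Restricting gives $\pi^n=\sum_k\lambda_k^n\pi_k^n$, where each $\pi_k^n$ is timelike $q$-optimal between totally timelike marginals $(\mu_{0k}^n,\mu_{1k}^n)$. The hypothesis then supplies $t$-midpoints $\mu_{tk}^n$ with
\[S_{N'}(\mu_{tk}^n)\leq \tau_{K,N'}^{(1-t)}(\Theta_k^n)S_{N'}(\mu_{0k}^n)+\tau_{K,N'}^{(t)}(\Theta_k^n)S_{N'}(\mu_{1k}^n).\]
I set $\mu_t^n:=\sum_k\lambda_k^n\mu_{tk}^n$. Exactly as before, concavity of $\ell_q^q$ and upper semicontinuity of $\ell_q$ on the compact emerald $J(\spt\mu_0,\spt\mu_1)$ give a narrow subsequential limit $\mu_t$, and this limit is a $t$-midpoint.

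The key simplification compared with the entropic case is that $S_{N'}$ is additive over mutually singular pieces. By \Cref{singularmidpoint}, the $\mu_{tk}^n$ are mutually singular, as are the $\mu_{ik}^n$. Hence $S_{N'}(\mu_i^n)=\sum_k(\lambda_k^n)^{1-1/N'}S_{N'}(\mu_{ik}^n)$ for $i=0,t,1$. Multiplying the per-piece inequality by $(\lambda_k^n)^{1-1/N'}$ and summing gives
\[S_{N'}(\mu_t^n)\leq -\int\Big(\tau^{(1-t)}_{K,N'}(\Theta(x,y))\rho_0^n(x)^{-1/N'}+\tau^{(t)}_{K,N'}(\Theta(x,y))\rho_1^n(y)^{-1/N'}\Big)d\pi^n(x,y),\]
where $\Theta(x,y)=\Theta_k^n$ on the $k$-th piece. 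Here I use that on $A_k^n$ the density of $\mu_0^n$ is $\lambda_k^n\rho_{0k}^n$, and the same for $\mu_1^n$. No H\"older or log-convexity step is needed, since the $\TCD_q$ integrand is already pointwise in the coupling. Uniform continuity of $\ell_+$ on $K_n$ and monotonicity of $\tau$ in $\theta$ then let me replace $\Theta_k^n$ by $\ell(x,y)\mp1/n$.

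Next I pass to the limit. Since $\rho_i^n=\pi(K_n)^{-1}\tilde\rho_i^n$ increases to $\rho_i$ $\m$-a.e. (because $\pi|_{K_n}\uparrow\pi$), and $\pi^n\to\pi$ setwise, monotone and dominated convergence should give convergence of the right-hand side to the $\TCD_q$ integral against $\pi$. For the left-hand side I use narrow lower semicontinuity of $S_{N'}$ on the compact set $J(\spt\mu_0,\spt\mu_1)$. This yields the $\TCD_q$ inequality at $\mu_t$ for each $N'>N$, with the coupling $\pi$ fixed independently of $N'$.

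Finally, I iterate midpoints and diagonalize over a dense set of times, as in \Cref{midpointTMCP}, to obtain a geodesic and hence, via \Cref{optimalplanprop}, a timelike $q$-optimal plan. I expect this last step to be the main obstacle. The $\TCD_q$ inequality is not a two-point inequality in $(\mu_0,\mu_1)$: it involves the densities transported along a specific plan. When I concatenate midpoints I must check that the coefficients compose correctly, for example through the semigroup-type identities for $\tau$ along subsegments. I also need the plan produced in the limit to be the one whose endpoint coupling is $\pi$. I would handle this by taking the limit plan induced by the iterated constructions, with endpoint coupling $\pi$. I would then follow the argument of \cite[Thm.\ 3.33]{Braun}, as cited for this purpose before \Cref{midpointTMCP}, to check that the inequality is inherited at every $t$, using uniqueness of $q$-optimal maps from \Cref{optimalmaps} to identify the couplings.
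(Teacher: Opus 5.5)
\textbf{Verdict: there is a genuine gap, in your final step.} It comes from doing the midpoint-to-geodesic upgrade in the opposite order from the paper.

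Your decomposition, summation and limit argument is the paper's own. The paper also splits a timelike $q$-optimal $\pi$ into totally timelike pieces and uses \Cref{singularmidpoint} to get $S_{N'}(\mu_t^n)=\sum_k(\lambda_k^n)^{1-1/N'}S_{N'}(\mu_{tk}^n)$. It then replaces $\Theta_k^n$ by $\ell(x,y)\mp\frac1n$ and passes to the limit using setwise convergence $\pi^n\to\pi$, truncations $\chi_{\{\rho_i\geq C\}}$ and monotone convergence in $C$. Your Fatou-type limit works too, but note that $\rho_i^n=\pi(K_n)^{-1}\tilde\rho_i^n$ converges without being monotone.

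The difference is where the upgrade from $t$-midpoints to a geodesic sits. The paper does it \emph{first}, for totally timelike pairs, where the hypothesis is still a two-point inequality in the numbers $S_{N'}(\mu_0)$, $S_{N'}(\mu_1)$, $\Theta$. By reference to \Cref{midpointTMCP}, it upgrades this to a geodesic satisfying the inequality for every $t$. It then applies that geodesic version to each piece. So $\mu_t^n=\sum_k\lambda_k^n\mu_{tk}^n$ is a whole geodesic, mutual singularity holds for all $t$ at once, and the subsequential limit already carries the integrated inequality at every $t$. Nothing integrated is ever iterated.

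In your order, the final step must iterate the integrated $\TCD_q$ inequality, and your sketch does not show this is possible.
\begin{itemize}
\item \emph{Densities.} To treat a midpoint of $(\mu_0,\mu_s)$ you would rerun the construction on an optimal coupling $\pi'$ of that pair. That produces $\rho_s^{-1/N'}$ integrated against $\pi'$. All you know about $\mu_s$ is the single number $S_{N'}(\mu_s)$, bounded by an integral against $\pi$. Chaining would need a pointwise inequality for $\rho_s$ along a plan, and nothing provides one.
\item \emph{Coefficients.} The two-sided coefficients do not compose, even for scalar two-point inequalities. Only the one-sided identity $\tau^{(s)}_{K,N}(t\theta)\,\tau^{(t)}_{K,N}(\theta)=\tau^{(st)}_{K,N}(\theta)$ survives; this is the telescoping used toward the Dirac target in \Cref{midpointTMCP}. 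Take the midpoint of $\mu_{1/2}$ and $\mu_1$, even granting $\Theta=\theta/2$ for that pair. The coefficient left in front of $S_{N'}(\mu_1)$ is $\tau^{(1/2)}_{K,N}(\theta/2)\big(1+\tau^{(1/2)}_{K,N}(\theta)\big)$. For $K=1$, $N=2$, $\theta=\pi/2$ this is $\approx 0.8295<0.8324\approx\tau^{(3/4)}_{K,N}(\theta)$.
\item \emph{Your proposed repair.} \Cref{optimalmaps} presupposes $\TMCP^e(K,N)$, which is not among the hypotheses of this lemma. Uniqueness of the optimal coupling would not by itself put separately chosen midpoints on one geodesic. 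A diagonal limit of iterated midpoints also gives no control that the inducing plan has endpoint coupling $\pi$, which $\TCD_q$ requires.
\end{itemize}

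The remedy is the paper's ordering: obtain a coherent family in $t$ on totally timelike pairs first, and feed it into your decomposition, so that all times come from one family. Be aware that the paper only sketches that first upgrade. By the coefficient computation above, it cannot be a plain iteration of two-sided $\tau$-inequalities either.
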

\begin{proof}
By recreating the proof of \Cref{midpointTMCP}, we see that the assumption implies that each totally timelike pair $(\mu_0,\mu_1)\in \Pcal_c^{ac}(X)$ admits a geodesic $\mu_t$ such that  
\[S_{N'}(\mu_t)\leq \tau_{K,N'}^{(1-t)}(\Theta)S_{N'}(\mu_0)+ \tau_{K,N'}^{(1-t)}(\Theta)S_{N'}(\mu_1)\quad \forall t\in [0,1].\]
It's sufficient to prove that this implies that $X\in \TCD_q(K,N)$. So let $(\mu_0, \mu_1)\in \Pcal(X)^2$ be $q$-timelike. By the same argument as in \Cref{midpointTCDe}, we obtain sequences $\mu_i^n:=\sum_{k=1}^{M_n}\lambda_k^n \mu_{ik}^n$ for each $i=0,1$, such that $\mu_i^n\narrowto \mu_i$, with corresponding optimal couplings $\pi^n:=\sum_{k=1}^{M_n}\lambda_k^n \pi_k^n$ such that $\pi_k^n \in \TOpt_q(\mu_{0k}^n, \mu_{1k}^n)$, and $\pi^n\in \TOpt_q(\mu_0^n, \mu_1^n)$. Moreover the pairs $(\mu_{0k}^n, \mu_{1k}^n)$ are totally timelike, $\mu_{ik}^n\perp \mu_{ik'}^n$ for all $k\neq k'$, and we have $S_{N'}(\mu_i^n)\to S_{N'}(\mu_i)$ for all $N'\in \N$. Finally we have that 
\begin{align}\label{Thetaellestimate}
|\Theta_k^n-\ell(x,y)|<\frac 1n\quad \forall (x,y)\in \spt\mu_{0k}^n\times \spt\mu_{1k}^n,
\end{align}
where 
\[\Theta_k^n:= \begin{cases}
\inf_{\spt\mu_{0k}^n\times \spt\mu_{1k}^n}\ell(x,y) & K\geq 0;\\
\sup_{\spt\mu_{0k}^n\times \spt\mu_{1k}^n}\ell(x,y) & K<0.
\end{cases}
\]

Since $(\mu_{0k}^n, \mu_{1k}^n)$ are totally timelike, there is a geodesic $\mu_{tk}^n$ such that for all $N'>N$, and all $t\in [0,1]$, 
\[S_{N'}(\mu_{tk}^n)\leq \tau_{K,N'}^{(1-t)}(\Theta_k^n)S_{N'}(\mu_{0k}^n)+ \tau_{K,N'}^{(1-t)}(\Theta_k^n)S_{N'}(\mu_{1k}^n).\]
We define $\mu_t^n:=\sum_{k=1}^{M_n}\lambda_k^n \mu_{tk}^n$, and observe that $\mu_t^n$ converges subsequentially to a geodesic $\mu_t$, by global hyperbolicity. Mutual singularity at the endpoints then gives mutual singularity along the entire geodesic $\mu_{t,k}^n$, and so, for each $t\in[0,1]$ we obtain
\begin{align*}
S_{N'}(\mu_t^n) &= \sum_{k=1}^{M_n}(\lambda_k^n)^{1-1/N'}S_{N'}(\mu_{t,k}^n)\leq \sum_{k=1}^{M_n} (\lambda_k^n)^{1-1/N'}\left( \tau_{K,N'}^{(1-t)}(\Theta_k^n)S_{N'}(\mu_{0k}^n)+ \tau_{K,N'}^{(1-t)}(\Theta_k^n)S_{N'}(\mu_{1k}^n)\right)\\
& \overset{(*)}{\leq} - \sum_{k=1}^{M_n}\int \tau_{K,N'}^{(1-t)}\left(\ell(x,y)\mp \frac1n\right)\left(\frac{\rho_{0k}^n(x)}{\lambda_k^n}\right)^{-1/N'}+ \tau_{K,N'}^{(t)}\left(\ell(x,y)\mp \frac1n\right)\left(\frac{\rho_{1k}^n(y)}{\lambda_k^n}\right)^{-1/N'}d\lambda_k^n\pi_k^n(x,y)\\
&= -\int \tau_{K,N'}^{(1-t)}\left(\ell(x,y)\mp \frac 1n\right)(\rho_0^n)^{-1/N'}+ \tau_{K,N'}^{(t)}\left(\ell(x,y)\mp \frac1n\right)\left(\rho_1^n\right)^{-1/N'}d\pi^n(x,y)\\
&\leq  -\int \tau_{K,N'}^{(1-t)}\left(\ell(x,y)\mp \frac 1n\right)(\rho_0^n)^{-1/N'}\chi_{\{\rho_0\geq C\}}+ \tau_{K,N'}^{(t)}\left(\ell(x,y)\mp \frac1n\right)\left(\rho_1^n\right)^{-1/N'}\chi_{\{\rho_1\geq C\}}d\pi^n(x,y)
\end{align*}
where inequality $(*)$ is obtained from \cref{Thetaellestimate}. Now recall that $\pi^n\to \pi$ setwise, which implies that 
\[\int fd\pi^n\to \int fd\pi\] for all bounded functions $f:X\to \R$. Also recall that 
for each $\epsilon>0$, and sufficiently large $n$, we have 
\[\rho_i-\epsilon \leq \rho_i^n\leq (1+\epsilon)\rho_i,\]
which implies that $(\rho_i^n)^{-1/N} \chi_{\{\rho_i\geq C\}}$ is bounded. It follows that for each $C>0$, 
\[S_{N'}(\mu_t)\leq - \int \tau_{K,N'}^{(1-t)}\left(\ell(x,y)\right)\rho_0^{-1/N'}\chi_{\{\rho_0\geq C\}}+ \tau_{K,N'}^{(t)}\left(\ell(x,y)\right)\rho_1^{-1/N'}\chi_{\{\rho_1\geq C\}}d\pi(x,y).\]
Taking a limit as $C\to 0$ gives the result, by monotone convergence theorem. 
\end{proof}
\begin{theorem}\label{sTBMtoTCD} Fix $0<q<1$. Let $X$ be a timelike $q$-essentially non-branching globally hyperbolic synthetic spacetime with $\ell_+$ continuous. If $X\in \sTBM_q(K,N')$ for all $N'>N$, then $X\in \TCD_q(K,N)$. 
\end{theorem}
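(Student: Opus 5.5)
The plan mirrors the proof of \Cref{sTBMtoTCDe}, replacing $U_N$ by the R\'enyi entropies $S_{N'}$ and the reduced coefficients $\sigma_{K/N}^{(t)}$ by $\tau_{K,N'}^{(t)}$. The target is the hypothesis of \Cref{midpointTCD}: for every totally timelike pair $(\mu_0,\mu_1)\in\Pcal_c^{ac}(X)^2$ and every $t\in[0,1]$, we want a $t$-midpoint $\mu_t$ with
\[
S_{N'}(\mu_t)\leq \tau_{K,N'}^{(1-t)}(\Theta)S_{N'}(\mu_0)+\tau_{K,N'}^{(t)}(\Theta)S_{N'}(\mu_1)
\]
for all $N'>N$. \Cref{midpointTCD} then gives $X\in\TCD_q(K,N)$.

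The first step is to get optimal maps. Since $\sTBM_q(K,N')$ implies $\TBM(K,N')$ (the support of $(e_t)_\#\eta$ lies in $G_t(A,B)$), and $\tau_{K,N'}\geq\sigma_{K/N'}$ by the stated properties of distortion coefficients, we get $X\in\TBM^*(K,N')$. \Cref{TBMtoTMCP} then gives $\TMCP^e(K,N')$. Totally timelike pairs of compactly supported measures are $q$-separated, so \Cref{optimalmaps} supplies $q$-optimal maps, and \Cref{goodsimplesequence} applies.

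Next, fix $Y=J(\spt\mu_0,\spt\mu_1)$, a metrizer $d$, and $\delta_n$ from \Cref{midpointapproxlemma} with $\epsilon=1/n$. Take a good simple sequence $\mu_i^n=\sum_j\lambda_j^n\m_{A_{i,j}^n}$. For each pair $(A_{0,j}^n,A_{1,j}^n)$, apply $\sTBM_q(K,N')$ to get a plan $\eta_j^n$, and set $A_{t,j}^n:=\spt(e_t)_\#\eta_j^n$ and $\mu_t^n=\sum_j\lambda_j^n\m_{A_{t,j}^n}$. One subtlety is that $\sTBM_q(K,N')$ produces a plan for each $N'$ separately. I would handle this by fixing a countable sequence $N'_m\downarrow N$ and noting that $\m(A_{t,j}^n)$ is what matters. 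A cleaner route is to observe that if the inequality holds for one plan with $N'$, it is implied for larger $N''$ via the monotonicity $\tau_{K,N''}^{(t)}\le\tau_{K,N'}^{(t)}$ combined with the power-mean inequality. It therefore suffices to prove the estimate for each fixed $N'$, possibly with $N'$-dependent midpoints, and invoke \Cref{midpointTCD} after noting its proof only uses a fixed $N'$ at a time. Exactly as in \Cref{TBMtoTMCP}, concavity of $\ell_q^q$ and \Cref{midpointapproxlemma} show that narrow subsequential limits $\mu_t$ are $t$-midpoints. The stable couplings property together with \Cref{singularmidpoint} makes the $A_{t,j}^n$ mutually disjoint.

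With disjointness, $S_{N'}(\mu_t^n)=-\sum_j(\lambda_j^n)^{1-1/N'}\m^{1/N'}(A_{t,j}^n)$, and the same formula holds for $\mu_0^n$ and $\mu_1^n$. Plugging in the $\sTBM$ inequality gives
\[
S_{N'}(\mu_t^n)\leq-\sum_j(\lambda_j^n)^{1-1/N'}\bigl(\tau_{K,N'}^{(1-t)}(\Theta_j^n)\m^{1/N'}(A_{0,j}^n)+\tau_{K,N'}^{(t)}(\Theta_j^n)\m^{1/N'}(A_{1,j}^n)\bigr).
\]
Since $\Theta_j^n\geq\Theta$ when $K\ge0$ (respectively $\le\Theta$ when $K<0$), monotonicity of $\tau_{K,N'}^{(t)}$ in $\theta$ lets us replace every $\Theta_j^n$ by $\Theta=\Theta(\spt\mu_0,\spt\mu_1)$. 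This yields $S_{N'}(\mu_t^n)\le\tau^{(1-t)}_{K,N'}(\Theta)S_{N'}(\mu_0^n)+\tau^{(t)}_{K,N'}(\Theta)S_{N'}(\mu_1^n)$. This step is easier than its entropic counterpart because $S_{N'}$ is additive over disjoint pieces, so no Hölder or log-convexity step is needed.

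Finally, let $n\to\infty$: $S_{N'}(\mu_i^n)\to S_{N'}(\mu_i)$ by the continuity-along-entropy property, and $S_{N'}$ is narrowly lower semicontinuous on the compact $Y$. Hence $S_{N'}(\mu_t)\le\liminf S_{N'}(\mu_t^n)$, giving the hypothesis of \Cref{midpointTCD}. The main obstacle I expect is the bookkeeping above around $N'$-dependence of the plans and midpoints, together with verifying that the disjointness from \Cref{singularmidpoint} applies to the supports (not just the measures) of the chosen midpoints. For the latter I would pass to the uniform measures on $A_{t,j}^n$, noting $\spt(e_t)_\#\eta_j^n$ and the measure $(e_t)_\#\eta_j^n$ differ only by null sets once singularity is established for the plans themselves.
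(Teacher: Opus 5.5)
Your architecture is the paper's: a good simple sequence, $\sTBM_q$ applied to each pair $(A_{0,j}^n,A_{1,j}^n)$, disjointness of the $A_{t,j}^n$ via \Cref{singularmidpoint}, additivity of $S_{N'}$ over disjoint pieces, monotonicity in $\theta$ to pass to $\Theta$, and then \Cref{midpointTCD}. Your explicit derivation of optimal maps through $\TBM^*(K,N')\Rightarrow\TMCP^e(K,N')$ is a welcome addition that the paper leaves implicit. The genuine gap is the $N'$-bookkeeping. The paper settles it by asserting uniqueness of the geodesic between $\m_{A_{0,j}^n}$ and $\m_{A_{1,j}^n}$, so that one $A_{t,j}^n$ serves every $N'>N$. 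Your substitute fails as written, for two reasons.

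\emph{First, the upgrade from $N'$ to $N''$ is misjustified.} The monotonicity $\tau^{(t)}_{K,N''}\le\tau^{(t)}_{K,N'}$ for $N''>N'$ is false when $K<0$. In that case $\tau^{(t)}_{K,N}<t$ and $\tau^{(t)}_{K,N}\to t$ as $N\to\infty$; numerically, $\tau^{(1/2)}_{-1,2}(1)\approx 0.471<\tau^{(1/2)}_{-1,3}(1)\approx 0.480$. Even where the monotonicity holds, combining it with the power-mean inequality does not close the estimate, because $\tau^{(1-t)}_{K,N'}+\tau^{(t)}_{K,N'}\neq 1$. The upgrade itself is true, for a different reason:
\begin{itemize}
\item Log-convexity of $k\mapsto\sigma^{(s)}_k(\theta)$ (equivalent to item 3 of the distortion-coefficient properties), together with $\sigma^{(s)}_0=s$, makes $m\mapsto m\log(\sigma^{(s)}_{K/m}(\theta)/s)$ nonincreasing.
\item This is exactly $\tau^{(s)}_{K,N''}\le(\tau^{(s)}_{K,N'})^{p}s^{1-p}$ with $p=N'/N''$.
\item H\"older's inequality $\sum_i u_i^{1-p}v_i^{p}\le(\sum_i u_i)^{1-p}(\sum_i v_i)^{p}$, with $(u_1,u_2)=(1-t,t)$, then gives $\tau^{(1-t)}_{K,N''}a^{1/N''}+\tau^{(t)}_{K,N''}b^{1/N''}\le(\tau^{(1-t)}_{K,N'}a^{1/N'}+\tau^{(t)}_{K,N'}b^{1/N'})^{p}$.
\end{itemize}

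\emph{Second, the final assembly is wrong.} Running \Cref{midpointTCD} ``one $N'$ at a time'' does not yield $\TCD_q(K,N)$, which requires a single plan valid for all $N'>N$; \Cref{midpointTCD} likewise needs a single midpoint valid for all $N'>N$. Your first idea, a sequence $N'_m\downarrow N$, is the right repair, but it needs a diagonal limit. For each $m$, apply the corrected upgrade to the sets $A_{t,j}^n$ to get a $t$-midpoint $\mu_t^{(m)}$ satisfying the inequality for all $N''\ge N'_m$. Then pass to a narrow limit in $m$ inside the compact $\Pcal(Y)$. This works because:
\begin{itemize}
\item $t$-midpoints survive narrow limits, by upper semicontinuity of $\ell_q$ and the reverse triangle inequality;
\item each $S_{N''}$ is lower semicontinuous;
\item the right-hand side, written with $\Theta$, does not depend on $m$.
\end{itemize}
Repaired this way, your route has the merit of not needing uniqueness of optimal dynamical plans. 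The paper invokes that uniqueness, but \Cref{optimalmaps} only gives uniqueness of couplings.

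Finally, your proposed reason for disjointness of the supports $A_{t,j}^n$ is not valid. Mutual singularity of $(e_t)_\#\eta_j^n$ and $(e_t)_\#\eta_{j'}^n$ says nothing about $\m(A_{t,j}^n\cap A_{t,j'}^n)$, since mutually singular measures can have supports overlapping in positive $\m$-measure. The paper merely asserts this disjointness from \Cref{singularmidpoint}, so the weak spot is shared, but your remark does not close it.
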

\begin{proof}
Let $(\mu_0,\mu_1)\in \Pcal_{c}^{ac}(X)^2$ be a totally timelike pair. By the same proof as in \Cref{sTBMtoTCDe}, and keeping the same notation, we obtain a good simple sequence $(\mu_0^n, \mu_1^n)$ with 
\[\mu_i^n:=\sum_{j=1}^{M_n}\lambda_j^n\m(A_{i,j}^n),\]
where $\diam_d(A_{i,j}^n)<\delta^n$ for some metrizer $d$ and $\delta^n\to 0$, which we are free to specify later. For each $N'>N$, we may then obtain a $t$-midpoint $\mu_t$  between $\mu_0$ and $\mu_1$ as a (subsequential) limit of 
\[\mu_t^n:= \sum_{j=1}^{M_n}\lambda_j^n \m_{A_{t,j}^n}.\]
Each of the $A_{t,j}^n$ are disjoint, and satisfy 
\[\m^{1/N'}(A_{t,j}^n) \geq \tau_{K,N'}^{(1-t)}(\Theta_{j}^n)\m^{1/N'}(A_{0,j}^n)+\tau_{K,N}^{(t)}(\Theta_j^n)\m^{1/N'}(A_{1,j}^n).\] 

Recall that for each $N'>N$, $A_{t,j}^n$ is the support of a geodesic joining $\m_{A_{0,j}^n}$ to $\m_{A_{1,j}^n}$ satisfying the above inequality. But by hypothesis, geodesics are unique, so there is one geodesic satisfying the above inequality for all $N'>N$. Consequently we obtain
\begin{align*}
S_{N'}(\mu_t^n) &= - \int \left(\sum_j^{M_n}\lambda_j^n \frac{\chi_{A_{t,j}^n}}{\m(A_{t,j}^n)}\right)^{1-1/N'}d\m = -\sum_{j=1}^{M_n} (\lambda_j^n)^{1-1/N'}\m^{1/N'}(A_{t,j}^n)\\
&\leq -\sum_{j=1}^{M_n} (\lambda_j^n)^{1-1/N'} \left(\tau_{K,N'}^{(1-t)}(\Theta_{j}^n)\m^{1/N}(A_{0,j}^n)+\tau_{K,N'}^{(t)}(\Theta_j^n)\m^{1/N'}(A_{1,j}^n)\right)\\
&\leq  - \tau_{K,N'}^{(1-t)}(\Theta^n)\sum_{j=1}^{M_n} (\lambda_j^n)^{1-1/N'} \m^{1/N}(A_{0,j}^n)-\tau_{K,N'}^{(t)}(\Theta^n)\sum_{j=1}^{M_n}(\lambda_j^n)^{1-1/N'}\m^{1/N'}(A_{1,j}^n)\\
&= \tau_{K,N'}^{(1-t)}(\Theta^n)S_{N'}(\mu_0^n)+ \tau_{K,N'}^{(1-t)}(\Theta^n)S_{N'}(\mu_1^n).
\end{align*}
Taking an appropriate subsequential limit gives 
\[S_{N'}(\mu_t)\leq \tau_{K,N'}^{(1-t)}(\Theta)S_{N'}(\mu_0)+ \tau_{K,N'}^{(1-t)}(\Theta)S_{N'}(\mu_1),\]
which implies that $X\in \TCD_q(K,N)$, by \Cref{midpointTCD}.  
\end{proof}

\printbibliography
\end{document}